\definecolor{pantone312}{HTML}{009DD1}
\let\euscr\mathscr \let\mathscr\relax
\newcommand{\avint}{\int \hspace{-1.0em}-\,}
\definecolor{darkgreen}{rgb}{0,0.55,0}
\newcommand{\N}{\mathbbm{N}}
\newcommand{\cs}{\c{s}}
\newcommand{\R}{\mathbbm{R}}
\newcommand{\g}{\mbox{\sl g}}
\newcommand{\F}{{\mathcal F}}
\newcommand{\CalA}{\mathcal{A}}
\newcommand{\CalB}{\mathcal{B}}
\newcommand{\CalC}{\mathcal{C}}
\newcommand{\CalH}{\mathcal{H}}
\newcommand{\CalJ}{\mathcal{J}}
\newcommand{\CalK}{\mathcal{K}}
\newcommand{\CalG}{\mathcal{G}}
\newcommand{\CalL}{\mathcal{L}}
\newcommand{\CalM}{\mathcal{M}}
\newcommand{\CalU}{\mathcal{U}}
\newcommand{\CalO}{\mathcal{O}}
\newcommand{\CalT}{\mathcal{T}}
\newcommand{\CalV}{\mathcal{V}}
\newcommand{\corrM}{\bm{\nu}}
\newcommand{\T}{\mathbbm{T}}
\DeclareMathOperator{\supp}{supp}
\DeclareMathOperator{\diverg}{div}
\DeclareMathOperator*{\esssup}{ess\,sup}
\newcommand{\dd}{\,{\rm d}}
\newcommand{\dx}{\,\dd x}
\newcommand{\dt}{\, \dd t}
\def\Xint#1{\mathchoice
{\XXint\displaystyle\textstyle{#1}}%
{\XXint\textstyle\scriptstyle{#1}}%
{\XXint\scriptstyle\scriptscriptstyle{#1}}%
{\XXint\scriptscriptstyle\scriptscriptstyle{#1}}%
\!\int}
\def\XXint#1#2#3{{\setbox0=\hbox{$#1{#2#3}{\int}$ }
\vcenter{\hbox{$#2#3$ }}\kern-.59\wd0}}
\def\avint{\Xint-}
\newtheorem{prop}{Proposition}
\newtheorem{theorem}{Theorem}
\newtheorem{lemma}{Lemma}
\newtheorem{remark}{Remark}
\newtheorem{definition}{Definition}
\begin{document}
\phantom{ }
\vspace{4em}

\begin{flushleft}
{\large \bf Probabilistic Descriptions of Fluid Flow: A Survey}\\[2em]
{\normalsize \bf Dennis Gallenm\"uller}\\[0.5em]
\small Institute of Applied Analysis, Ulm University, Germany.\\
E-mail: dennis.gallenmueller@alumni.uni-ulm.de\\[3em]
{\normalsize \bf Raphael Wagner}\\[0.5em]
\small Institute of Applied Analysis, Ulm University, Germany.\\
E-mail: raphael.wagner@uni-ulm.de\\[3em]
{\normalsize \bf Emil Wiedemann}\\[0.5em]
\small Department of Mathematics, Friedrich-Alexander-Universit\"at Erlangen-N\"urnberg, Germany.\\
E-mail: emil.wiedemann@fau.de\\[3em]

{\em Dedicated to the memory of Olga Aleksandrovna Ladyzhenskaya on the occasion of her 100th birthday}\\[3em]
\end{flushleft}
{\bf Abstract:} Fluids can behave in a highly irregular, turbulent way. It has long been realised that, therefore, some weak notion of solution is required when studying the fundamental partial differential equations of fluid dynamics, such as the compressible or incompressible Navier-Stokes or Euler equations. The standard concept of weak solution (in the sense of distributions) is still a deterministic one, as it gives exact values for the state variables (like velocity or density) for almost every point in time and space. However, observations and mathematical theory alike suggest that this deterministic viewpoint has certain limitations. Thus, there has been an increased recent interest in the mathematical fluids community in probabilistic concepts of solution. Due to the considerable number of such concepts, it has become challenging to navigate the corresponding literature, both classical and recent. We aim here to give a reasonably concise yet fairly detailed overview of probabilistic formulations of fluid equations, which can roughly be split into {\em measure-valued} and {\em statistical} frameworks. We discuss both approaches and their relationship, as well as the interrelations between various statistical formulations, focusing on the compressible and incompressible Euler equations.   


\tableofcontents

\vspace{1cm}

\section{Introduction}

When the flow of a fluid, whether compressible or incompressible, viscous or ideal, is described within the setting of classical continuum mechanics, one would generally expect to obtain a deterministic theory: If the state of the fluid is fully known at a certain time, then it should in principle be possible to uniquely calculate its state at any subsequent time. This seems in line with the description of fluid flows by means of nonlinear partial differential equations, such as the {\em isentropic compressible Euler equations}
\begin{equation}\label{eq: CE}
\begin{aligned}
\partial_t(\rho u)+\operatorname{div}(\rho u\otimes u)+\nabla p(\rho)&=0,\\
\partial_t\rho + \operatorname{div}(\rho u)&=0,
\end{aligned}
\end{equation}    
or the {\em incompressible Euler equations}
\begin{equation}\label{eq: IE}
\begin{aligned}
\partial_tu+\operatorname{div}(u\otimes u)+\nabla p&=0\\
\operatorname{div} u&=0.
\end{aligned}
\end{equation} 
These equations can be posed either on an open subset $\Omega\subset\R^d$ or on the torus $\T^d$, where the space dimension is $d\geq 2$ in the incompressible and $d\geq 1$ in the compressible case; either way, for some given time $T>0$, one looks for the unknown velocity field $u:(0,T)\times\Omega\to\R^d$ and, for~\eqref{eq: CE}, also for the unknown density field $\rho:(0,T)\times\Omega\to\R$, which is always non-negative (non-negativity is propagated in time, so it suffices to impose a non-negative initial density). The role of the pressure is different in the two models: In~\eqref{eq: CE} it is a constitutively given function of the density, while in~\eqref{eq: IE} it is another unknown $p:(0,T)\times\Omega\to\R$. If the boundary of $\Omega$ is non-empty, one usually imposes the impermeability boundary condition $u\cdot n=0$, where $n:\partial\Omega\to\R^d$ is the outer unit normal. This means the fluid may not flow in or out of the given domain.

Both systems are formally well-posed, as the number of unknowns matches the number of equations. The same is true for the corresponding viscous models, where~\eqref{eq: CE} and~\eqref{eq: IE} are supplemented with suitable friction terms, respectively. In the case of Newtonian fluids, one thus arrives at the (compressible or incompressible) {\em Navier-Stokes equations}. 

We will deal here primarily with the inviscid models, although much of our discussion will not be specific for the Euler equations. Our choice has several reasons: First, turbulent flows typically occur at high Reynolds numbers, and the Euler equations correspond to the limiting case of infinite Reynolds number. Secondly, from a mathematical viewpoint, the Euler equations display vast ill-posedness in the framework of weak solutions. Both these observations, arguably, shake our na\"ive attempt to describe turbulent flows deterministically.

Indeed, consider an experiment where water flows past an obstacle. In the wake of the obstacle, a turbulent region is observable. Repeating the experiment many times, one will always see a different flow field in the turbulent region; it is not deterministically predictable, at least not practically. Yet, qualitatively, the result of the experiment appears similar each time. It turns out that, while the details of the flow field are unpredictable, certain {\em statistical} quantities of the flow appear to be very regular and stable, such as the average velocity or the so-called structure functions. For a discussion of these observations, see~\cite{frisch}; extensive state-of-the-art numerical simulations supporting similar conclusions can be found in~\cite{FMT}.

This line of thought is very classical and -- at least in the incompressible situation -- has led to statistical, phenomenological theories of turbulence, most prominently the one of Kolmogorov~\cite{K42}. The relation between Kolmogorov's theory and the arguably more fundamental PDE-based approach is still very unclear, although the recent rigorous proof of Onsager's Conjecture~\cite{eyink, CET, Isett, BDSV} provides a fascinating link.

But also from the analytical viewpoint, there have arisen issues with the deterministic weak solution concept. It has been known since the work of Scheffer~\cite{scheffer} that weak solutions of the incompressible Euler equations can evolve non-uniquely from the same initial data. While Scheffer's solutions can be discarded as non-physical artefacts, convex integration techniques developed by De Lellis and Sz\'ekelyhidi have allowed to show that the Cauchy problem remains ill-posed even under further energy conditions~\cite{DLS08}, and that this is the case for a dense subset (in the energy space) of initial data~\cite{SW12}, even in regularity classes up to the Onsager-critical H\"older exponent~\cite{DRSz}.
The situation is similar for the compressible Euler equations~\cite{DLS08, Chio, CDLK, CKMS, CVY}. 

A conceivable way out of all this trouble is to turn to {\em probabilistic} descriptions of fluid flows, thus abandoning the noble ambition of predicting exactly the fluid velocity (and density) field at every time. Instead, for a probabilistic framework, at least three different options spring to mind:

\begin{itemize}
\item {\bf trajectory statistical solutions:} a probability measure $\mu$ on the set of solution trajectories (say of the velocity) of the PDE, for instance on the space $C([0,T];H)$, where $H$ is the space of solenoidal vector fields in $L^2(\mathbb R^3;\mathbb R^3)$;
\item {\bf phase space statistical solutions:} a time-dependent probability measure $\mu_t$ which, for every time $t$, lives on phase space (say on $H$) and whose evolution is governed in some way by the underlying PDE;
\item {\bf measure-valued solutions (mvs):} a probability measure $\mu_{t,x}$, depending on time and space, that lives on $\mathbb R^3$ and whose dependence on $x$ and $t$ is again constrained by the underlying equation.  
\end{itemize}  

It turns out that the first two concepts are closely related, while the measure-valued solutions have a very different character. This is because the latter describe only the \emph{one-point statistics} of the flow: One can express, in the language of measure-valued solutions, a statement like ``The probability that, at time $t$ and location $x$, the fluid velocity has negative vertical component equals 30\%'', but not a statement about {\em correlations} of the velocity between two different points in space, such as ``The probability that the vertical velocity components at time $t$ at space points $x$ and $y$ have the same sign is 30\%''. Such information on correlations can be extracted from statistical solutions. This indicates that statistical solutions contain more information and thus represent a stronger notion than the measure-valued concept. Mathematically, this comes at a price: Measure-valued solutions, in contrast to statistical ones, can easily be shown to exist for any initial data of finite energy. In fact, known results indicate that the existence theory for statistical solutions is essentially just as hard as for deterministic weak solutions. 

Before we describe in more detail the properties of measure-valued and of statistical solutions to be discussed in the body of this work, let us point out what will {\em not} be covered here: First, despite the probabilistic flavour of the theory, no actual randomness -- and, correspondingly, no genuinely stochastic mathematical techniques -- will appear. Indeed, although the initial data will typically be a probability measure instead of a deterministic function, the evolution itself will not be subject to any stochastic forcing, for instance. Secondly, in the context of statistical solutions of the Navier-Stokes equations, one could be interested in the long-time behaviour of the system from a dynamical systems perspective and talk about statistically stationary solutions, invariant manifolds, global attractors, etc. (see for instance~\cite[Chapters III \& IV]{FMRT01}). We shall not say anything about either of these topics, as we focus on fundamental existence and uniqueness questions and on the interrelations between various probabilistic formulations of the Euler equations. 

\subsection{Measure-Valued Solutions of the Euler Equations}
Generally speaking, measure-valued solutions are parametrized probability measures that solve a partial differential equation only in an averaged sense. In particular, a (weak) solution can be recovered as a measure-valued solution if the latter is equal to a Dirac distribution. Hence, measure-valued solutions at a certain point in space-time consist of a probability distribution of possible values instead of one specific value. As a consequence, we consider measure-valued solutions as an instance of a probabilistic type of solution. Note that the evolution of the averages of measure-valued solutions is constrained by the PDEs. The rigorous definition of this very weak notion of solution will be given below in Section~\ref{defmvs}.

Measure-valued solutions build upon the concept of Young measures, which are certain parametrized probability measures. They are named after L.~C.~Young, who introduced them in the context of optimal control theory~\cite{Y37}. Young measures are also an effective tool to study the limiting behavior of oscillating sequences in the calculus of variations and PDE theory, cf.~for example \cite{M99,T79}. As a solution concept, measure-valued solutions consisting of Young measures were introduced by DiPerna \cite{D85} for hyperbolic conservation laws, where they were used to give a proof for the existence of energy admissible weak solution in space dimension $d=1$ by using compensated compactness.

Measure-valued solutions of the incompressible Euler system have been introduced and proven to exist for any initial data by DiPerna and Majda \cite{DM87osc}, where the main technical advance was to include possible concentrations. This is not quite the framework we will discuss here, but we will focus on the formalism of~\cite{FGSW17}, cf.~also \cite{DST12}. Neustupa \cite{N93} then defined measure-valued solutions for the compressible Euler and Navier-Stokes equations and also proved their existence. Note that the existence of measure-valued solutions is particularly interesting whenever weak solutions are not known to exist for all initial data, which is the case for the incompressible and compressible Euler equations and the 3D compressible Navier-Stokes system with adiabatic exponent $\gamma\leq 3/2$.

Arguably, the notion of measure-valued solution is a very weak solution concept. This is reflected by its vast non-uniqueness: The Young measure is only constrained in its first and second moments (see Definition~\ref{defmvsdef}), whereas for given expectation and (non-zero) covariance, there is always an infinitude of probability measures. On top of this obvious non-uniqueness, a further degree of ill-posedness is inherited by the non-uniqueness of weak solutions created through convex integration, cf.~for example \cite{DLS08}. However, somewhat surprisingly, measure-valued solutions still enjoy the weak-strong uniqueness property, see \cite{BDS11} for a proof in the incompressible and \cite{GSW15} for a proof in the compressible Euler case. A more practical use of measure-valued solutions is the description of singular limits of approximating solution sequences. For example, certain numerical schemes for the compressible Navier-Stokes system converge to measure-valued solutions as shown in \cite{FL18}. See also \cite{FKMT17} for a discussion of this topic regarding the vortex-sheet initial data. Other measure-valued singular limits are also studied in \cite{BTW12,FKM19,G22}.

Moreover, for the incompressible Euler equations in space dimension $d\geq 2$ it holds that every oscillation measure-valued solution is the limit of some sequence of weak solutions, cf.~Theorem~\ref{thm: incompressiblemvs} below. Thus, although measure-valued solutions are only constrained through their first and second moment, they contain essentially the same information as weak solutions.

The latter property has also been a point of criticism in the sense that measure-valued solutions only display one-point statistics. However, for an adequate description of turbulence one may also include correlations. 




\subsection{Statistical Solutions of the Euler Equations}
Statistical solutions are generally speaking a probabilistic concept which aims at describing the evolution of a whole ensemble of initial data or a whole ensemble of solutions governed by some PDE.

In the former case, for the 3D Navier-Stokes equations, a first attempt at this was made by E. Hopf in \cite{H52}: Given a probability distribution $\overline{\mu}$ on a set of initial data and assuming that all individual phases $u$ have been transformed to $S_tu$ after time $t \geq 0$, the distribution $\mu_t$ of the solutions at time $t$ should be determined by the formula
\begin{equation}\label{eq: pushforward solutions}
\mu_t(A) = \overline{\mu}(S_t^{-1}A)
\end{equation}
for all measurable sets $A$.

As probability distributions are uniquely determined by their characteristic functional, i.e. their Fourier transform 
\begin{equation*}
\F(v,t) = \int \exp(i(v,u))\dd\mu_t(u) = \int \exp(i(v,S_tu))\dd\overline{\mu}(u),
\end{equation*}
Hopf went on and formally derived an equation for $\F(v,t)$, the \textit{Hopf statistical equation}, which would no longer explicitly depend on the solution operators $S_t$, $t\geq 0$. In the natural phase space $L^2$ of the Navier-Stokes equations, one can express elements $v$ of this Hilbert space in terms of abstract Fourier series with respect to suitable orthonormal bases and consequently transform the Hopf statistical equation on an infinite dimensional space into an equation for the infinitely many (finite-dimensional) Fourier coefficients of the argument $v$ of $\F$. This was considered and rigorously solved in 1976 by Ladyzhenskaya and Vershik~\cite{LV78}. We will derive this \textit{coordinate version} later for the Euler equations.

A few years prior, in the two seminal articles \cite{F72,F73}, Foia\cs\, had proposed the first rigorous concept of statistical solutions of the Navier-Stokes equations, based on discussions with Prodi. Just like Hopf, Foia\cs\, described time-parametrized probability measures $\lbrace \mu_t \rbrace_{t \geq 0}$ on an $L^2$ based phase space for a given initial distribution $\overline{\mu}$. Likewise, in order to formally derive an equation describing the evolution of these measures, the assumption of existence of solution operators $\lbrace S_t\rbrace_{t \geq 0}$ describing the deterministic evolution was made so that naturally $\mu_t$ is the pushforward measure of $\overline{\mu}$ along $S_t$ as in \eqref{eq: pushforward solutions}, which we will denote by ${S_t}_\sharp\overline{\mu}$. However, Foia\cs\, did not consider their Fourier transforms but more generally $\int \Phi(u)\dd\mu_t(u) = \int \Phi(S_tu)\dd\overline{\mu}(u)$ for a suitable class of functionals $\Phi$. Then, the evolution equation $\partial_t u = F(t,u)$ dictates 
\begin{align*}
\frac{d}{dt}\int \Phi(u)\dd\mu_t(u) &= \frac{d}{dt}\int \Phi(S_tu)\dd\overline{\mu}(u) = \int (\Phi'(S_tu),\partial_t(S_tu))_{L^2}\dd\overline{\mu}(u)\\
&= \int (\Phi'(S_tu),F(t,S_tu))_{L^2}\dd\overline{\mu}(u) = \int (\Phi'(u),F(t,u))_{L^2}\dd\mu_t(u).
\end{align*}
Integrating the left and the right-hand sides over some time interval $[t',t]$, $0 \leq t' \leq t$, yields the \textit{Foia\cs-Liouville equation}
\begin{equation}\label{eq: Foias-Liouville}
\int \Phi(u)\dd\mu_t(u) = \int \Phi(u)\dd\mu_{t'}(u) + \int_{t'}^t \int (\Phi'(u),F(t,u))_{L^2}\dd\mu_s(u)\,\dd s.
\end{equation}
As suggested above, the Foia\cs-Prodi approach seems to be more general and flexible as Hopf basically only considered the functionals $\Phi(u) = \exp(i(v,u))$ with fixed $v \in L^2(\R^2)$. These functionals can be approximated by Foia\cs' admissible test functionals, as noted by himself in \cite{F72}, and therefore Foia\cs-Prodi statistical solutions also solve the Hopf statistical equation.

A few years thereafter, Vishik and Fursikov introduced a different type of statistical solution of the Navier-Stokes equations, see \cite{VF79,VF88}. There, they consider single distributions on the set of solution trajectories of the Navier-Stokes equations and show by a similar approach their existence.

A connection to the Foia\cs-Prodi statistical solutions has been drawn by Foia\cs, Rosa and Temam in \cite{FRT13}, where they introduce a very similar type of statistical solution, labelled Vishik-Fursikov measure, but with some differences to the original notion of Vishik and Fursikov. For example Foia\cs, Rosa and Temam suppose a sharper type of energy inequality and show that the set of Leray-Hopf solutions of the Navier-Stokes equations is already measurable, instead of assuming that the Leray-Hopf solutions contain a measurable subset which carries the measure.

Projecting these Vishik-Fursikov measures to phase space at each time yields a family of time-parametrized measures, which are then indeed Foia\cs-Prodi statistical solutions. It is unclear if every Foia\cs-Prodi statistical solution can be obtained in this way.

We would also like to point out that the method of constructing Vishik-Fursikov measures in \cite{FRT13}, which is based on the Krein-Milman theorem, is flexible enough to also be applied to other equations. Indeed, it has served as motivation for Bronzi, Mondaini and Rosa to develop an abstract framework for the theory of statistical solutions \cite{BMR16}, which has also helped the last two authors of this article construct statistical solutions of the 2D incompressible Euler equations in a fairly comprehensive manner~\cite{WW23}. In this broader context, the analogue of the Vishik-Fursikov measure is called {\em trajectory statistical solution} and the analogue of the Foia\cs-Prodi statistical solution is labelled {\em phase space statistical solution}.

At this point, the connection to measure-valued solutions is not really clear. A first such connection was drawn by Chae in \cite{C91g}, where he proved that in the three-dimensional case, Foia\cs-Prodi statistical solutions of the Navier-Stokes equations converge in the vanishing viscosity limit to a measure-valued solution of the Euler equations, as introduced in \cite{DM87conc}. In particular, Chae showed that certain time-parametrized measures on $L^2$ correspond to (generalized) Young measures.

It is still unclear whether the converse is possible, namely, if every (generalized) Young measure corresponds to a time parametrized measure on $L^2$. Related to this question is the work \cite{FLM17} by Fjordholm, Lanthaler and Mishra. There, they introduce a new notion of statistical solution of hyperbolic conservation laws, based on infinite hierarchies of Young measures, which they label \textit{correlation measures}. In particular, they prove that there is a one-to-one correspondence between correlation measures and measures on $L^p$ spaces.

As for the connection to measure-valued solutions, the correlation measures are not specifically tied to hyperbolic conservation laws and in fact, a formulation similar to their original definition may be introduced for the Euler or Navier-Stokes equations~\cite{FW18,LMP21,FMW22}. This formulation leads to an infinite chain of equations, where the first equation precisely demands that the first Young measure in this hierarchy be a measure-valued solution in the sense of DiPerna~\cite{D85} (without concentration part).

This has to be interpreted in the way that the correlation measure augments the classical Young measure or measure-valued solutions -- which only describe the first moments and one-point statistics -- with all multipoint correlations and therefore carries more information.

For the sake of a priori distinguishing this concept of statistical solutions from the other ones, we will refer to them as \textit{moment based statistical solutions}. In \cite{FMW22}, in the context of the Euler and Navier-Stokes equations, these have been labelled \textit{(inviscid) Friedmann-Keller statistical solutions}. This is due to early work by Keller and Friedmann in 1924~\cite{FK24} and later considerations by Fursikov, who also used this term for describing moment based chains of evolution equations related to the Navier-Stokes system, see e.g.~\cite{F87,F90,F91,F95}.

We mentioned before that there is a one-to-one correspondence between measures on $L^p$ spaces and correlation measures. As the latter objects, parametrized by time, are considered for the notion of statistical solutions due to Foia\cs\, and Prodi or more generally, the aforementioned phase space statistical solutions, this one-to-one correspondence allows one to compare these two notions of statistical solutions. It turns out in fact that in case of the Euler or Navier-Stokes equations, the two notions of statistical solutions are equivalent. In the latter case, this can be found in the recent preprint \cite{FMW22} due to Fjordholm, Mishra and Weber. In case of the Euler equations, this will be proved here later, as it illustrates the main arguments but is a bit easier, since one does not have to worry about ensuring that the correlation measure or the associated measure on $L^2$ is concentrated on $H^1$.

In Subsection~\ref{subsec: definitions}, we will give the precise definition of each notion of statistical solution that we mentioned in this introduction and compare all of them in more detail. In the subsequent subsection, we discuss and compare various ways in which statistical solutions can be constructed. Finally, in Subsection~\ref{Comparison to measure valued solutions}, we will comment more in depth on the connection between measure-valued and statistical solutions, and in Section~\ref{Discussion and open problems}, we develop some more thoughts and open problems in this direction.

\section{Measure-Valued Solutions}\label{sec: mvs}

\subsection{Definitions for the Incompressible and the Isentropic Euler System}\label{defmvs}
Let us quickly review the basics from Young measure theory needed for our discussion.\\
Let $\Omega\subset\R^d$ be open (or $\Omega=\T^d$) and let $X\subset \R^N$ be a measurable set. A \textit{Young measure} $\nu$ on $X$ with parameters in $(0,T)\times \Omega$ is defined to be an element of $L^\infty_{\operatorname{w}}((0,T)\times \Omega;\mathcal{P}(X))$. Here, $L^\infty_{\operatorname{w}}((0,T)\times \Omega;\mathcal{P}(X))$ denotes the set of weakly-* measurable families of probability measures $\nu=(\nu_{t,x})_{(t,x)\in(0,T)\times \Omega}$, i.e.~the map
\begin{align*}
	(t,x)\mapsto \int\limits_{X}^{}f(z)\dd\nu_{t,x}(z)=:\langle\nu_{t,x},f\rangle
\end{align*}
is measurable for all functions $f\in C_0(X)$.\\
There is also a notion of convergence tailored to Young measures:\\
We say that a sequence of measurable functions $(z_n)_{n\in\N}$ from $(0,T)\times \Omega$ to $X$ \textit{generates} the Young measure if
\begin{align*}\label{equ: generating YM}
	f(z_n(\cdot))\overset{*}{\rightharpoonup}\langle\nu_{(\cdot)},f\rangle\text{ in }L^{\infty}((0,T)\times \Omega)
\end{align*}
for all $f\in C_0(X)$. As a shorthand notation we will use $z_n\overset{Y}{\rightharpoonup}\nu$.\\
This notion of convergence, by definition, behaves very well under composition with nonlinear functions, unlike weak limits. This may be the most important advantage of using solution concepts based on Young measures for nonlinear PDEs. For example, consider the stationary vortex sheet
\begin{align*}
	u(t,x_1,x_2)=\begin{cases}
		e_1,& x_2<\frac{1}{2}\\
		-e_1,& x_2>\frac{1}{2}
	\end{cases},
\end{align*}
which solves the incompressible Euler equations on the torus $\T^2$. As a consequence, for every $n\in \N$, the functions $u_n(t,x_1,x_2):=u(nt,nx_1,nx_2)$ are also solutions. Moreover, $(u_n)_{n\in\N}$ converges weakly to zero. However, if we compose it with the nonlinear operation $f\colon v\mapsto v\otimes v$, the constant functions $u_n\otimes u_n=\begin{pmatrix}
	1 & 0\\
	0 & 0
\end{pmatrix}$ clearly have a nonzero limit.\\
On the other hand, it is not hard to show that
\begin{align*}
	u_n\overset{Y}{\rightharpoonup}\frac{1}{2}\delta_{e_1}+\frac{1}{2}\delta_{-e_1}.
\end{align*}
Since $(u_n)_{n\in\N}$ is uniformly bounded in $L^{\infty}$, we can still insert the tensor product $f\colon v\mapsto v\otimes v$ into the definition of Young measure convergence, which yields
\begin{align*}
	u_n\otimes u_n=f(u_n)\overset{*}{\rightharpoonup}\langle\nu, f\rangle = \begin{pmatrix}
		1 & 0\\
		0 & 0
	\end{pmatrix}.
\end{align*}
The following convergence result is essential and is a consequence of the Banach-Alaoglu Theorem.
\begin{theorem}[Fundamental theorem of Young measure theory]\label{thm: FundamentalYM}
	Let $(u_n)_{n\in\N}$ be a sequence of maps bounded in $L^p((0,T)\times \Omega;\R^N)$ for some $1\leq p\leq\infty$. Then, up to a subsequence, $(u_n)_{n\in\N}$ generates a Young measure on $\R^N$.
\end{theorem}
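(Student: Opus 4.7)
The plan is to realize the candidate Young measure as a weak-$*$ limit of the natural embedding $u_n\mapsto \delta_{u_n(\cdot)}$ into a suitable dual space, and then to use the $L^p$-bound to rule out loss of mass at infinity. First, I would identify $L^{\infty}_{\mathrm{w}}((0,T)\times\Omega;\CalM(\R^N))$ (weakly-$*$ measurable families of finite Radon measures, normed by the essential supremum of the total variation) with the dual of the Bochner space $L^1((0,T)\times\Omega;C_0(\R^N))$ under the pairing
\begin{equation*}
\la \nu,\Psi\ra = \int_0^T\!\!\int_\Omega \int_{\R^N} \Psi(t,x,z)\,\dd\nu_{t,x}(z)\,\dd x\,\dd t.
\end{equation*}
This representation combines the Riesz theorem $C_0(\R^N)^*=\CalM(\R^N)$ with the separability of $C_0(\R^N)$ and the $\sigma$-finiteness of $(0,T)\times\Omega$; in particular the predual is separable.

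Second, set $\nu^n_{t,x}:=\delta_{u_n(t,x)}$. Since $\|\nu^n_{t,x}\|_{TV}=1$ pointwise, each $\nu^n$ lies in the closed unit ball of the above dual space. The sequential Banach--Alaoglu theorem, available thanks to the separability of the predual, then extracts a subsequence (not relabelled) with $\nu^n\overset{*}{\rightharpoonup}\nu$ for some $\nu\in L^{\infty}_{\mathrm{w}}((0,T)\times\Omega;\CalM(\R^N))$. Testing this weak-$*$ convergence against products $\phi(t,x)f(z)$ with $\phi\in L^1((0,T)\times\Omega)$ and $f\in C_0(\R^N)$ unravels exactly into the required generation property $f(u_n)\overset{*}{\rightharpoonup}\la\nu_{(\cdot)},f\ra$ in $L^\infty((0,T)\times\Omega)$.

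The delicate, and therefore main, step is to verify that $\nu_{t,x}$ is a \emph{probability} measure for a.e.~$(t,x)$: non-negativity is inherited from $\nu^n\geq 0$, but $\nu_{t,x}(\R^N)=1$ requires ruling out mass escaping to infinity, and this is where the $L^p$-bound enters. Choose cutoffs $\chi_R\in C_c(\R^N)$ with $\mathbf{1}_{B_R}\leq \chi_R\leq \mathbf{1}_{B_{2R}}$ and $\chi_R\nearrow 1$. For any measurable $E\subset(0,T)\times\Omega$ of finite measure, Chebyshev's inequality gives, when $p<\infty$,
\begin{equation*}
\int_E \bigl(1-\chi_R(u_n)\bigr)\,\dd x\,\dd t \leq \bigl|\{(t,x)\in E:\,|u_n(t,x)|>R\}\bigr|\leq \frac{\|u_n\|_{L^p}^p}{R^p}\leq \frac{C}{R^p},
\end{equation*}
while for $p=\infty$ the left-hand side vanishes as soon as $R$ exceeds the uniform bound on $(u_n)$. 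Because $\chi_R\in C_0(\R^N)$, passing $n\to\infty$ via the already established weak-$*$ convergence yields $\int_E(1-\la\nu_{t,x},\chi_R\ra)\,\dd x\,\dd t\leq C/R^p$. Letting $R\to\infty$, two successive applications of monotone convergence (first on each $\nu_{t,x}$, then in $(t,x)$) produce $\int_E(1-\nu_{t,x}(\R^N))\,\dd x\,\dd t\leq 0$. Combined with $\nu_{t,x}(\R^N)\leq 1$, inherited via lower semicontinuity of the norm under weak-$*$ limits, this forces $\nu_{t,x}(\R^N)=1$ for a.e.~$(t,x)\in E$, and since $E$ is an arbitrary set of finite measure, a.e.~on $(0,T)\times\Omega$. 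This tightness step, though routine in hindsight, is the heart of the argument.
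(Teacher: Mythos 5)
Your proposal is correct and follows exactly the route the paper indicates (it offers no written proof beyond the remark that the theorem ``is a consequence of the Banach--Alaoglu Theorem''): you realize the candidate Young measure as a weak-$*$ limit of the Diracs $\delta_{u_n(t,x)}$ in the dual of $L^1((0,T)\times\Omega;C_0(\R^N))$ and then use the uniform $L^p$-bound to prevent loss of mass at infinity. The duality identification, the sequential extraction via separability of the predual, and the Chebyshev tightness argument are all carried out correctly, so this is a complete version of the argument the paper leaves to the reader.
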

Note that every element of $L_{\operatorname{w}}^\infty((0,T)\times \Omega;\mathcal{P}(X))$ can be shown to be generated by some sequence of measurable functions.

With Theorem~\ref{thm: FundamentalYM} at hand we are able to introduce the notion of measure-valued solution. We begin with defining measure-valued solutions for the incompressible Euler system.
\begin{definition}\label{defmvsdef}
	A triple $(\nu,m,D)$ is a \emph{dissipative measure-valued solution} of the incompressible Euler system (\ref{eq: IE}) with initial data $u^0\in L^2(\Omega)$ if
	\begin{itemize}
		\item[i)] $\nu\in L^{\infty}_{\operatorname{w}}((0,T)\times \Omega;\mathcal{P}(\R^d))$ is a Young measure,
		\item[ii)] the measure $m\in \mathcal{M}((0,T)\times \overline{\Omega};\R^{d\times d})$ satisfies
		\begin{align*}
			m(\dx\dt\,)=m_t(\dx\,)\otimes \dt
		\end{align*}
		for some family of measures $(t\mapsto m_t)\in L^{\infty}\left((0,T);\mathcal{M}(\overline{\Omega};\R^{d\times d})\right)$,
		\item[iii)] $D\in L^{\infty}(0,T)$ satisfies $D\geq 0$ and
		\begin{align*}
			|m_t|(\overline{\Omega})\leq C\cdot D(t)
		\end{align*}
		for some $C>0$ and a.e.~$t\in(0,T)$,
		\item[iv)] the momentum equation
		\begin{align*}
			\int\limits_{0}^{T}\int\limits_{\Omega}^{}\langle\nu_{t,x},\operatorname{id}\rangle\cdot\partial_t\varphi&+\langle\nu_{t,x},\operatorname{id}\otimes\operatorname{id}\rangle:\nabla\varphi\dx\dt+\int\limits_{0}^{T}\int\limits_{\Omega}^{}\nabla\varphi:m_t\dt\\
			&=-\int\limits_{\Omega}^{}u^0\cdot\varphi(0,\cdot)\dx
		\end{align*}
		holds for all $\varphi\in C_c^{\infty}([0,T)\times \Omega;\R^d)$ with $\diverg \varphi=0$,
		\item[v)] the divergence-free condition
		\begin{align*}
			\int\limits_{\Omega}^{}\langle\nu_{t,x},\operatorname{id}\rangle\cdot\nabla\psi\dx=0
		\end{align*}
		holds for a.e.~$t\in(0,T)$ and all $\psi\in C_c^{\infty}(\Omega)$,
		\item[vi)] the energy inequality
		\begin{align*}
			\frac{1}{2}\int\limits_{\Omega}^{}\langle\nu_{t,x},|\cdot|^2\rangle\dx+D(t)\leq \frac{1}{2}\int\limits_{\Omega}^{}|u^0|^2\dx
		\end{align*}
		holds for a.e.~$t\in(0,T)$.
	\end{itemize}
	As usual, the above integrals are assumed to exist as part of the definition. When $m=0$ and $D=0$, we simply write $\nu$ and call it an \emph{oscillation measure-valued solution}. Moreover, if an oscillation measure-valued solution $\nu$ satisfies $\nu_{t,x}=\delta_{u(t,x)}$ for some $u\in L^2((0,T)\times \Omega;\R^d)$, this boils down to the usual definition of $u$ being a weak solution.
\end{definition}
The corresponding notion of measure-valued solution for the isentropic Euler system is defined as follows.
\begin{definition}\label{def: mvsCE}
	A triple $(\nu,m,D)$ is a \emph{dissipative measure-valued solution} of the isentropic Euler system (\ref{eq: CE}) with adiabatic exponent $1<\gamma<\infty$ and initial data $(\rho^0,u^0)\in L^1(\Omega)$ if
	\begin{itemize}
		\item[i)] $\nu\in L^{\infty}_{\operatorname{w}}((0,T)\times \Omega;\mathcal{P}([0,\infty)\times \R^d))$ is a Young measure,
		\item[ii)] the measure $m\in \mathcal{M}((0,T)\times \overline{\Omega};\R^{d\times d})$ satisfies
		\begin{align*}
			m(\dx\dt)=m_t(\dx)\otimes \dt
		\end{align*}
		for some family of measures $(t\mapsto m_t)\in L^{\infty}\left((0,T);\mathcal{M}(\overline{\Omega};\R^{d\times d})\right)$,
		\item[iii)] $D\in L^{\infty}(0,T)$ satisfies $D\geq 0$ and
		\begin{align*}
			|m_t|(\overline{\Omega})\leq C\cdot D(t)
		\end{align*}
		for some $C>0$ and a.e.~$t\in(0,T)$,
		\item[iv)] the momentum equation
		\begin{align*}
			&\int\limits_{0}^{T}\int\limits_{\Omega}^{}\langle\nu_{t,x},\rho u\rangle\cdot\partial_t\varphi+\langle\nu_{t,x},\rho u\otimes u\rangle:\nabla\varphi+\langle\nu_{t,x},\rho^{\gamma}\rangle\diverg\varphi\dx\dt\\
			&+\int\limits_{0}^{T}\int\limits_{\Omega}^{}\nabla\varphi:\dd m_t\dt
			=-\int\limits_{\Omega}^{}\rho^0u^0\cdot\varphi(0,\cdot)\dx
		\end{align*}
		holds for all $\varphi\in C_c^{\infty}([0,T)\times \Omega;\R^d)$,
		\item[v)] the continuity equation
		\begin{align*}
			\int\limits_{0}^{T}\int\limits_{\Omega}^{}\langle\nu_{t,x},\rho\rangle\cdot\partial_t\psi+\langle\nu_{t,x},\rho u\rangle\cdot\nabla\psi\dx\dt=-\int\limits_{\Omega}^{}\rho^0\psi(0,x)\dx
		\end{align*}
		holds for all $\psi\in C_c^{\infty}([0,T)\times\Omega)$,
		\item[vi)] the energy inequality
		\begin{align*}
			\frac{1}{2}\int\limits_{\Omega}^{}\langle\nu_{t,x},\rho|u|^2\rangle+\frac{1}{\gamma-1}\langle\nu_{t,x},\rho^{\gamma}\rangle\dx+D(t)
			\leq \frac{1}{2}\int\limits_{\Omega}^{}\rho^0|u^0|^2+\frac{1}{\gamma-1}(\rho^0)^{\gamma}\dx
		\end{align*}
		holds for a.e.~$t\in(0,T)$.
	\end{itemize}
	Again, the above integrals are assumed to exist as part of the definition. When $m=0$ and $D=0$, we simply write $\nu$ and call it an \emph{oscillation measure-valued solution}.	Moreover, if an oscillation measure-valued solution $\nu$ satisfies $\nu_{t,x}=\delta_{(\rho(t,x),u(t,x))}$ for some $(\rho,u)\in L^1((0,T)\times \Omega;[0,\infty)\times \R^d)$, this reduces to the usual definition of $(\rho,u)$ being a weak solution.
\end{definition}

\subsection{Existence of Measure-Valued Solutions}
One of the most striking advantages of the notion of measure-valued solution is that for every $L^2$-initial data a corresponding dissipative measure-valued solution is known to exist globally. This is still unknown for weak solutions. If one drops the requirement of energy dissipativity, then at least for the incompressible Euler system weak solutions exist for every initial data, cf.~\cite{W11}. 
\begin{theorem}
	Let $u^0\in L^2(\Omega)$ or $(\rho^0,u^0)\in (L^{\gamma}\times L^2)(\Omega)$. Then there exists a dissipative measure-valued solution of the incompressible Euler system with initial data $u^0$, and in the compressible case there exists a dissipative measure-valued solution of the isentropic Euler system with initial data $(\rho^0,u^0)$.
\end{theorem}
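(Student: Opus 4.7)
The natural route is to produce a measure-valued solution as a Young-measure limit of an approximating sequence of \emph{weak} solutions to a regularized equation, using the Fundamental Theorem~\ref{thm: FundamentalYM} to extract $\nu$, and a separate weak-* compactness argument in the space of Radon measures to capture the concentration defect $m$ and the energy defect $D$. Concretely, for the incompressible Euler system I would take $u_n$ to be Leray--Hopf weak solutions of the Navier--Stokes equations on $\Omega$ with vanishing viscosity $\nu_n\to 0$ and the same initial datum $u^0$ (which exist globally for any $L^2$ datum). For the isentropic Euler system I would analogously take the Feireisl--Novotn\'y--Stra\v{s}kraba finite-energy weak solutions $(\rho_n,u_n)$ of the compressible Navier--Stokes equations (perhaps first with an artificial pressure to cover all $\gamma>1$), with vanishing shear and bulk viscosities. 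In both cases the standard energy inequality for the approximate problem gives uniform bounds: $u_n$ is bounded in $L^\infty(0,T;L^2(\Omega))$, and in the compressible case additionally $\rho_n$ is bounded in $L^\infty(0,T;L^\gamma(\Omega))$ and $\rho_n|u_n|^2$ is bounded in $L^\infty(0,T;L^1(\Omega))$.

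\textbf{Extraction of the Young measure and the defects.} By Theorem~\ref{thm: FundamentalYM}, up to a subsequence $u_n\overset{Y}{\rightharpoonup}\nu$ in the incompressible case, and $(\rho_n,u_n)\overset{Y}{\rightharpoonup}\nu$ in the compressible case; here one uses the $L^p$-version of the fundamental theorem since the bounds are only $L^p$ rather than $L^\infty$. The concentration measure $m$ is defined as the difference
\[
m \;:=\; \text{weak-*}\lim_{n\to\infty}\bigl(u_n\otimes u_n\bigr) \;-\; \la \nu_{t,x},\operatorname{id}\otimes\operatorname{id}\ra\dx\dt
\]
in $\mathcal{M}((0,T)\times\overline\Omega;\R^{d\times d})$ (in the compressible case replace $u_n\otimes u_n$ by $\rho_n u_n\otimes u_n + \rho_n^\gamma I$). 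Since the kinetic energy is bounded in $L^\infty$ in time, $m$ disintegrates as $m_t\otimes \dt$ with $m_t\in L^\infty_t\mathcal{M}_x$, giving condition (ii). For $D$ I take the defect in the kinetic (plus, in the compressible case, internal) energy density: the difference between the weak-* limit of the energy density of $u_n$ and the barycentric expectation $\tfrac12\la\nu_{t,x},|\cdot|^2\ra$. Because the quadratic observable dominates the trace of $m$ in the sense $|u_n\otimes u_n|\le C|u_n|^2$, one obtains condition (iii), and the energy inequality (vi) follows from weak lower semicontinuity applied to the approximate energy inequality, the initial datum being the same.

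\textbf{Passing to the limit in the equation.} Linear terms (like $\partial_t(\rho_n u_n)$, $\diverg(\rho_n u_n)$ in the continuity equation, and the divergence-free constraint) pass to their barycentric counterparts by weak convergence. The nonlinear fluxes $u_n\otimes u_n$, $\rho_n u_n\otimes u_n$, $\rho_n^\gamma$, $\rho_n u_n$ are exactly where Young-measure convergence is used: for continuous bounded test functions the composition identity of Young measures gives the barycenters $\la\nu_{t,x},\cdot\ra$; the excess that is \emph{not} absorbed by the Young measure is precisely what defines $m$. The Navier--Stokes viscous terms $\nu_n\Delta u_n$ vanish in the limit thanks to the $L^2_tH^1_x$ bound and the $\nu_n\to 0$ factor. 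Testing against smooth $\varphi$, $\psi$ gives exactly the identities (iv)--(v) in both Definitions~\ref{defmvsdef} and~\ref{def: mvsCE}.

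\textbf{Main obstacles.} The key technical issue is to ensure that the defect $m$ is \emph{genuinely} absorbed as stated and that there is no further defect in lower-order terms like $\la\nu,\rho u\ra$. In the compressible case this requires using the Feireisl--Lions theory (renormalized continuity equation, effective viscous flux, strong convergence of the density) enough to control concentrations in the nonlinear pressure $\rho_n^\gamma$ so that all the concentration can be lumped into a single symmetric tensor-valued $m_t$; with the more flexible definition allowing $m$ to absorb concentrations in both the convective and pressure terms, this is standard and proceeds as in Neustupa~\cite{N93} and the framework of~\cite{FGSW17}. In the incompressible case, the main nontrivial step is to check the dominance $|m_t|(\overline\Omega)\le C\,D(t)$; this follows from the pointwise inequality $|a\otimes a|_F\le |a|^2$ applied at the level of the weak-* limit of energy densities and the subadditivity of the defect. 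Once these two points are in place the verification of each clause in Definitions~\ref{defmvsdef} and~\ref{def: mvsCE} is routine, and both existence statements are obtained simultaneously.
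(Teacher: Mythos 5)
Your proposal is correct and follows essentially the same route as the paper: a vanishing-viscosity limit of Leray--Hopf (resp.\ Feireisl-type) Navier--Stokes solutions, extraction of the Young measure via Theorem~\ref{thm: FundamentalYM}, definition of $m$ as the weak-* defect between the limiting nonlinear fluxes and their barycentric expectations, disintegration in time, and $D$ as the (suitably weighted) trace of $m$, with $|m_t|(\overline\Omega)\le C\,D(t)$ coming from the fact that the defect measures take values in the positive semidefinite cone. The only cosmetic difference is that you invoke more of the Feireisl--Lions machinery (effective viscous flux, strong density convergence) than is needed, since, as you yourself then note, the pressure concentration is simply absorbed into $m^{\rho^\gamma}$; the paper instead handles the lower-order terms $\rho_n$ and $\rho_n u_n$ by equiintegrability ($L^\gamma$ and $L^{2\gamma/(\gamma+1)}$ bounds with exponents $>1$).
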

\begin{proof}
	We will only prove the case of the isentropic Euler system when energy admissible weak solutions of the compressible Navier-Stokes system are known to exist, e.g.~when $\gamma>\frac{3}{2}$ and $d=3$ by the existence theory from \cite{FNP01}. The incompressible case follows similarly, where weak Navier-Stokes solutions exist by the work of Leray-Hopf \cite{L34,H51}. The other cases for the isentropic Euler equations are covered by \cite{FGSW17}.
	
	So, let $(\rho^0,u^0)\in (L^{\gamma}\times L^2)(\Omega)$ and let $(\rho_n,u_n)$ be the corresponding energy admissible weak solutions of the compressible Navier-Stokes system with viscosity parameter $\alpha_n\rightarrow 0$. This means that $(\rho_n,u_n)$ satisfy
	\begin{align*}
		\partial_t (\rho_n u_n)+\diverg (\rho_nu_n\otimes u_n)+\nabla(\rho_n^{\gamma})&=\alpha_n\diverg \mathbb{S}(\nabla u_n)\\
		\partial_t \rho_n+\diverg (\rho_n u_n)&=0
	\end{align*}
	in the sense of distributions with energy bound
	\begin{align*}
		&\;\int\limits_{\Omega}^{}\frac{1}{2}\rho_n|u_n|^2+\frac{1}{\gamma-1}(\rho_n)^{\gamma}\dx+\int\limits_{0}^{t}\int\limits_{\Omega}^{}\alpha_n\mathbb{S}(\nabla u_n):\nabla u_n\dx\dt\\
		\leq&\;\int\limits_{\Omega}^{}\frac{1}{2}\rho^0|u^0|^2+\frac{1}{\gamma-1}(\rho^0)^\gamma\dx=:E_0
	\end{align*}
	for a.e.~$t\in (0,T)$, where
	\begin{align*}
		\mathbb{S}(A)=\kappa\left(A+A^{\operatorname{T}}+\frac{2}{3}(\operatorname{tr}A)\cdot\mathbb{E}_d \right)+\eta(\operatorname{tr}A)\mathbb{E}_d.
	\end{align*}
	Here, $\mathbb{E}_d$ denotes the identity matrix in $d$ dimensions and $\kappa,\eta$ are some fixed positive constants. (For purely mathematical purposes, setting $\mathbb{S}(A)=A$ would do just as well.)
	
	The uniform energy bound implies that $(\rho_n)_{n\in\N}$ is $L^{\infty}_tL^{\gamma}_x$-bounded and $(\sqrt{\rho_n}u_n^2)_{n\in\N}$ is $L^{\infty}_tL^2_x$-bounded. Thus, Theorem~\ref{thm: FundamentalYM} implies that there exists a subsequence (which we will not relabel) that generates a Young measure $\nu$.
	
	When we consider $(\rho_nu_n\otimes u_n)$ and $(\rho_n^{\gamma})\mathbb{E}_d$, we observe that these sequences are bounded in $L^{\infty}((0,T);L^1(\Omega))$ but may not be bounded. Therefore, the functions $(\rho,\sqrt{\rho}u)\mapsto \rho u\otimes u$ and $\rho\mapsto \rho^\gamma$ may not be admissible test functions in the definition of Young measure generation, and accordingly the terms
	\begin{align*}
		\rho_nu_n\otimes u_n-\langle\nu_{t,x},\rho u\otimes u\rangle\text{ and }(\rho_n)^{\gamma}\mathbb{E}_d-\langle\nu_{t,x},\rho^{\gamma}\mathbb{E}_d\rangle
	\end{align*}
	may not converge weakly-* to zero. On the other hand, we may interpret the above terms as bounded Radon-measures on $[0,T]\times \overline{\Omega}$. Then we obtain (not relabeled) subsequences converging weakly-* in the sense of Radon-measures to some $m^{\rho u\otimes u},m^{\rho^{\gamma}}\in \mathcal{M}([0,T]\times \overline{\Omega};\R^{d\times d})$, respectively. Standard arguments from measure theory imply from the $L^{\infty}_tL^1_x$-bound a \textit{disintegration} of the form
	\begin{align*}
		m^{\rho u\otimes u}(\dx\dt\,)&=m_t^{\rho u\otimes u}(\dx\,)\otimes \dt,\\
		m^{\rho^{\gamma}}(\dx\dt\,)&=m_t^{\rho^{\gamma}}(\dx\,)\otimes \dt,
	\end{align*}
	where $(m_t^{\rho u\otimes u}),(m_t^{\rho^{\gamma}})$ are uniformly bounded families of measures in $\mathcal{M}(\overline{\Omega};\R^{d\times d})$. We define $m:=m^{\rho u\otimes u}+m^{\rho^{\gamma}}$. Then the disintegration measures also add up, i.e.~$m_t=m_t^{\rho u\otimes u}+m_t^{\rho^{\gamma}}$.
	
	We also define the \textit{dissipation defect} $D$ in terms of the trace of $m$ as the uniformly bounded function
	\begin{align*}
		D(t):=\frac{1}{2}\operatorname{tr}\left(m_t^{\rho u\otimes u}(\overline{\Omega})\right)+\frac{1}{d(\gamma-1)}\operatorname{tr}\left(m_t^{\rho^{\gamma}}(\overline{\Omega})\right)
	\end{align*}
	for a.e. $t\in (0,T)$. Since both $m_t^{\rho u\otimes u}$ and $m_t^{\rho^{\gamma}}$ live on the convex set of positive semidefinite symmetric matrices, any matrix norm is equivalent to the trace and so there exists some $C>0$ such that
	\begin{align*}
		|m_t|(\overline{\Omega})\leq C\cdot D(t)
	\end{align*}
	for a.e. $t\in(0,T)$. In particular, $D\geq 0$.
	We will now show that $(\nu,m,D)$ is a dissipative measure-valued solution.
		
	Note that i), ii) and iii) in Definition~\ref{def: mvsCE} are satisfied by construction. Since $(\rho_n)_{n\in\N}$ is $L^{\infty}_tL^{\gamma}_x$-bounded, it is equiintegrable, as $\Omega$ is a bounded set. By H\"older's inequality and the kinetic energy bound $\sqrt{\rho_n}u_n\in L^\infty L^2$, we see that $(\rho_nu_n)_{n\in\N}$ is bounded in $L^{2\gamma/(\gamma+1)}$, hence it is equiintegrable as $\frac{2\gamma}{\gamma+1}>1$ for $\gamma>1$.
	Therefore,
	\begin{align*}
		\int\limits_{0}^{T}\int\limits_{\Omega}^{}\rho_n\partial_t\psi+\rho_nu_n\cdot\nabla\psi\dx\dt\rightarrow \int\limits_{0}^{T}\int\limits_{\Omega}^{}\langle\nu_{t,x},\rho\rangle\partial_t\psi+\langle\nu_{t,x},\rho u\rangle\cdot\partial_t\psi\dx\dt
	\end{align*}
	for any $\psi\in C_c^{\infty}((0,T)\times \Omega;\R)$. This shows v) in Definition~\ref{def: mvsCE}.\\
	For the momentum equation, observe that for fixed $\varphi\in C_c^{\infty}([0,T)\times \Omega;\R^d)$ we have
	\begin{align*}
		&\;\alpha_n\int\limits_{0}^{T}\int\limits_{\Omega}^{}\mathbb{S}(\nabla u_n):\nabla\varphi\dx\dt\leq C\alpha_n\|\nabla\varphi\|_{L^{\infty}_tL^2_x}\int_0^T\|\nabla u_n(t)\|_{L^2_x}\dt\\
		\leq &\;C\alpha_n\|\nabla\varphi\|_{L^{\infty}_tL^2_x}\int_0^T\left(\int\limits_{\Omega}^{}\mathbb{S}(\nabla u_n):\nabla u_n\dx \right)^{\frac{1}{2}}\dt\|\nabla\varphi\|_{L^{\infty}_tL^2_x}\leq C \sqrt{{\alpha_n}E_0}T\\
		\rightarrow&\;0
	\end{align*}
	by Korn's inequality and the energy inequality for the Navier-Stokes equations. Note also that
	\begin{align*}
		(\rho_n)^{\gamma}\mathbb{E}_d:\nabla\varphi=(\rho_n)^{\gamma}\diverg\varphi.
	\end{align*}
	Thus,
	\begin{align*}
		&\;\int\limits_{0}^{T}\int\limits_{\Omega}^{}\rho_n u_n\cdot\partial_t\varphi+\rho_n u_n\otimes u_n:\nabla\varphi+(\rho_n)^{\gamma}\diverg\varphi\dx\dt\\
		\rightarrow&\;\int\limits_{0}^{T}\int\limits_{\Omega}^{}\langle\nu_{t,x},\rho u\rangle\cdot\partial_t\varphi+\langle\nu_{t,x},\rho u\otimes u\rangle:\nabla\varphi+\langle\nu_{t,x},\rho^{\gamma}\rangle\diverg\varphi\dx\dt+\int\limits_{0}^{T}\int\limits_{\Omega}^{}\nabla\varphi:\dd m_t\dt
	\end{align*}
	for all $\varphi\in C_c^{\infty}([0,T)\times \Omega;\R^d)$. This implies iv).
	
	Finally, the energy inequality vi) follows from the above discussion by taking traces of the terms $\rho_n u_n\otimes u_n$ and $(\rho_n)^{\gamma}\mathbb{E}_d$ with the correct coefficients.
\end{proof}
\subsection{Weak-Strong Uniqueness}
Although measure-valued solutions are a very weak notion of solution, they satisfy a certain uniqueness property. Clearly, there exist initial data with non-unique measure-valued solutions. Even weak solutions are already not unique. However, if the initial data gives rise to a classical solution, then every dissipative measure-valued solution corresponding to this initial data is already equal to the classical solution. This is called \textit{weak-strong uniqueness}. As a consequence, initial data giving rise to laminar flow cannot generate turbulent flow.

For simplicity, we restrict ourselves to the torus $\T^d$ as the space domain.
\begin{theorem}
	Let the initial data $u_0$ give rise to a strong incompressible solution $U\in C^1([0,T]\times \T^d)$ and an incompressible dissipative measure-valued solution $(\nu,m,D)$.
	
	Then $(\nu,m,D)=\left(\delta_{U},0,0\right)$.
\end{theorem}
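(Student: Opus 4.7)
My plan is to prove this via the classical relative energy (or modulated energy) method. For a.e.\ $\tau \in (0,T)$, define the relative energy
\begin{equation*}
E_{\mathrm{rel}}(\tau) := \frac{1}{2}\int_{\T^d} \langle \nu_{\tau,x}, |\xi - U(\tau,x)|^2\rangle \, dx + D(\tau),
\end{equation*}
where $\xi$ denotes the dummy variable of integration against $\nu_{\tau,x}$. The goal is to derive a Gronwall-type inequality $E_{\mathrm{rel}}(\tau) \leq C\int_0^\tau E_{\mathrm{rel}}(s)\, ds$ with $C$ depending only on $\|\nabla U\|_{L^\infty}$ and the constant appearing in iii) of Definition~\ref{defmvsdef}. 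Gronwall's lemma then forces $E_{\mathrm{rel}}\equiv 0$, which in turn yields $\nu_{t,x}=\delta_{U(t,x)}$ for a.e.\ $(t,x)$; combined with iii) this also forces $m\equiv 0$.

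To derive the inequality, I first expand the square to obtain
\begin{equation*}
E_{\mathrm{rel}}(\tau) = \Big[\tfrac{1}{2}\!\int \langle \nu_{\tau,x}, |\xi|^2\rangle \, dx + D(\tau)\Big] + \tfrac{1}{2}\!\int |U(\tau)|^2 \, dx - \int U(\tau)\cdot\langle \nu_{\tau,x},\xi\rangle\, dx.
\end{equation*}
The bracket is bounded by $\tfrac{1}{2}\!\int |u_0|^2\,dx$ via the MVS energy inequality vi), and $\tfrac{1}{2}\!\int |U(\tau)|^2\,dx = \tfrac{1}{2}\!\int|u_0|^2\,dx$ by energy conservation for the smooth Euler solution $U$. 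The remaining cross term is handled by testing the momentum equation iv) against (a smooth approximation of) $\varphi(t,x)=U(t,x)\mathbf{1}_{[0,\tau]}(t)$, which is legitimate because $U$ is $C^1$, divergence-free, and the torus has no boundary. Substituting $\partial_t U = -(U\cdot\nabla)U - \nabla P$ in the resulting identity and using that $\nabla P\cdot\langle \nu_{t,x},\xi\rangle$ integrates to zero thanks to v), one arrives after an elementary algebraic rearrangement (which exploits $\operatorname{div}U=\operatorname{div}\langle\nu,\xi\rangle=0$) at
\begin{equation*}
E_{\mathrm{rel}}(\tau) \leq -\int_0^\tau\!\!\int_{\T^d} \langle \nu_{t,x}, (\xi-U)\otimes(\xi-U)\rangle : \nabla U\, dx\, dt - \int_0^\tau\!\!\int_{\T^d} \nabla U : dm_t\, dt.
\end{equation*}
Since $\|\nabla U\|_{L^\infty}<\infty$, the elementary bound $|\langle \nu,(\xi-U)\otimes(\xi-U)\rangle|\leq \langle \nu,|\xi-U|^2\rangle$ together with $|m_t|(\T^d)\leq C D(t)$ from iii) yield exactly the desired Gronwall inequality.

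The main technical obstacle will be justifying the use of $U$ as a test function in the momentum equation, since the admissible test class in iv) consists of smooth functions compactly supported in $[0,T)\times\T^d$: one must approximate the time cutoff $\mathbf{1}_{[0,\tau]}$ by a smooth bump and choose $\tau$ to be a Lebesgue point of the relevant time-integrated quantities in order to pass to the limit and obtain a clean trace at $\tau$. A related subtlety is that $\nu$ is only weakly-$*$ measurable in time, so the pointwise-in-$\tau$ identity for $E_{\mathrm{rel}}(\tau)$ must be interpreted as an a.e.\ statement, which however is enough to run Gronwall and conclude $E_{\mathrm{rel}}\equiv 0$.
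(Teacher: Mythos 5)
Your proposal is correct and follows exactly the relative-energy/Gr\"onwall strategy that the paper itself uses: the paper proves the compressible analogue (Theorem~\ref{thm: weak-strong mvs}) by testing the momentum equation with the strong solution, combining with the energy inequality, and rearranging into the quadratic term $\langle\nu,\rho(u-U)\cdot\nabla U\cdot(u-U)\rangle$ plus a defect term controlled via iii), and explicitly states that the incompressible case follows from the same (simpler) arguments. Your algebraic reduction to $-\int_0^\tau\int\langle\nu,(\xi-U)\otimes(\xi-U)\rangle:\nabla U$, your use of v) to kill the pressure and the $U\otimes\langle\nu,\xi\rangle$ cross terms, and your remarks on the time-cutoff approximation and Lebesgue points are all consistent with that argument.
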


\begin{theorem}
	Let the initial data $(\rho_0,u_0)$ give rise to a strong compressible solution $(r,U)\in C^1([0,T]\times \T^d)$, with $r>0$ everywhere, and also to a compressible dissipative measure-valued solution $(\nu,m,D)$.
	
	Then $(\nu,m,D)=\left(\delta_{(r,U)},0,0 \right)$.
\end{theorem}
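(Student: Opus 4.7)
The plan is to prove this via the classical \emph{relative energy} (or relative entropy) method, going back to Dafermos in the context of hyperbolic conservation laws. The key object is the functional
\begin{align*}
\mathcal{R}(t) := \int_{\T^d}\left\langle \nu_{t,x},\, \tfrac{1}{2}\rho|u - U(t,x)|^2 + P(\rho) - P'(r(t,x))(\rho - r(t,x)) - P(r(t,x))\right\rangle dx + D(t),
\end{align*}
where $P(\rho) = \tfrac{1}{\gamma-1}\rho^\gamma$, so that $\rho P'(\rho) - P(\rho) = \rho^\gamma$. Since $r > 0$ is bounded below on $[0,T]\times\T^d$, $P$ is strictly convex on the relevant range, hence the integrand is pointwise nonnegative; combined with $|m_t|(\T^d)\leq CD(t)$, the vanishing of $\mathcal{R}$ at a given $t$ forces $\nu_{t,x} = \delta_{(r,U)(t,x)}$, $D(t) = 0$, and $m_t = 0$. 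At $t = 0$, $\mathcal{R}(0) = 0$ holds by the shared initial data combined with the energy inequality vi).

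The next step is to derive a Gronwall-type bound $\mathcal{R}(t) \leq C\int_0^t \mathcal{R}(s)\,ds$ for a.e.\ $t\in(0,T)$, with $C$ depending only on $\|r\|_{C^1}$, $\|U\|_{C^1}$, and $\min r$. To this end I expand $\mathcal{R}$ into its constituent pieces: the total energy $\int\langle\nu, \tfrac{1}{2}\rho|u|^2 + P(\rho)\rangle\,dx + D(t)$, bounded by the initial energy via vi); the cross term $-\int\langle\nu,\rho u\rangle\cdot U\,dx$, handled by testing the momentum equation iv) with a (mollified, time-cutoff) approximation of $\varphi = U$; the mixed term $\int\langle\nu,\rho\rangle\bigl(\tfrac{1}{2}|U|^2 - P'(r)\bigr)dx$, obtained by testing the continuity equation v) with $\psi = \tfrac{1}{2}|U|^2 - P'(r)$; and the purely strong contribution $\int\bigl(P'(r)r - P(r)\bigr)dx = \int r^\gamma\,dx$, whose time derivative I compute directly from the equations for $(r,U)$.

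After substituting the continuity and momentum equations satisfied by $(r,U)$ into the resulting identity, the leading-order terms cancel algebraically, and the survivors split into two groups. The first is a genuine relative-energy quantity, essentially $\int \langle\nu,\rho(u-U)\otimes(u-U)\rangle : \nabla U\,dx$ together with the pressure analogue, both bounded pointwise by $C\,\mathcal{R}(t)$ using $\|\nabla U\|_\infty$, $\|\nabla r\|_\infty$, the convexity of $P$, and the lower bound on $r$. The second group is the concentration contribution $\int_0^t\int_{\T^d} \nabla U : dm_s\,ds$, which via condition iii) is estimated by $C\|\nabla U\|_\infty\int_0^t D(s)\,ds \leq C\int_0^t \mathcal{R}(s)\,ds$. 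Gronwall's lemma together with $\mathcal{R}(0) = 0$ then yields $\mathcal{R}\equiv 0$, whence $(\nu,m,D) = (\delta_{(r,U)},0,0)$.

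The main obstacle I expect is the careful handling of the defect measure $m_t$ together with the fact that the weak formulations iv), v) only admit smooth, compactly (in time) supported test functions, while one would like to use $\varphi = U \in C^1$ and pass to a fixed final time $t$. Standard mollification of $U$ in space together with a Lipschitz time cutoff approaching the characteristic function of $[0,t]$ resolves both issues; the resulting error at the final time is controlled at Lebesgue points of the energy via vi), and the concentration error vanishes in the limit by iii). Everything else is an algebraic rearrangement of the compressible Euler relative-energy identity analogous to the smooth case, as carried out in detail in \cite{GSW15}.
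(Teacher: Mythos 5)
Your proposal is correct and follows essentially the same route as the paper: the relative energy functional you define coincides (up to notation, with $P(\rho)=\tfrac{1}{\gamma-1}\rho^\gamma$) with the one used in Theorem~\ref{thm: weak-strong mvs}, and the paper likewise tests the continuity equation with $\tfrac12|U|^2$ and $\tfrac{\gamma}{\gamma-1}r^{\gamma-1}$, the momentum equation with $U$, invokes the strong equations for $(r,U)$ to cancel the leading terms, controls the concentration term through $|m_t|(\overline{\T^d})\leq C\,D(t)$, and concludes by Gr\"onwall together with strict convexity of $\rho\mapsto\rho^\gamma$. The technical points you flag (time cutoffs for the test functions, handling of $m_t$) are treated the same way in the paper.
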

We only prove the compressible case, since the proof in the incompressible case is simpler and follows from the same arguments. For a proof of the incompressible case, see e.g.~\cite{BDS11} or \cite{W18}.

As the astute reader will find, the exclusion of vacuum in the strong solution ($r>0$) allows us to raise $r$ to a negative power in the proof. Generally, weak-strong uniqueness is not known to hold in the presence of vacuum, but specific appearances of vacuum can be tolerated~\cite{GJW}.

For a strong solution $(r,U)$ with initial data $(r_0,U_0)$ and a measure-valued solution $(\nu,m,D)$ with initial data $(\rho_0,u_0)$, define the \textit{relative energy} as
\begin{align*}
	&\;E_{\operatorname{rel}}(\nu,m,D|r,U)(t)\\
	:=&\,\int\limits_{\T^d}^{}\left\langle\nu_{t,x},\frac{1}{2}|u-U(t,\cdot)|^2+\frac{1}{\gamma-1}\big(\rho^{\gamma}-r^{\gamma}\big)-\frac{\gamma}{\gamma-1}r^{\gamma-1}(\rho-r)\right\rangle\dx+D(t)
\end{align*}
for a.e.~$t\in (0,T)$ and at $t=0$ define
\begin{align*}
	E_{\operatorname{rel}}(\nu,m,D|r,U)(0):=\int\limits_{\T^d}^{}\frac{1}{2}|u_0-U_0|^2+\frac{1}{\gamma-1}\big(\rho_0^{\gamma}-r_0^{\gamma}\big)-\frac{\gamma}{\gamma-1}r_0^{\gamma-1}(\rho_0-r_0)\dx.
\end{align*}
By the strong convexity of $\rho\mapsto\rho^{\gamma}$, we directly infer the weak-strong uniqueness property from the concept of \textit{weak-strong stability}, which we formulate as follows.
\begin{theorem}\label{thm: weak-strong mvs}
	Let the initial data $(\rho_0,u_0)$ give rise to a compressible dissipative measure-valued solution $(\nu,m,D)$ and let the initial data $(r_0,U_0)$ give rise to a strong compressible solution $(r,U)\in C^1([0,T]\times \T^d)$.\\
	Then there exists a constant $C>0$ such that
	\begin{align*}
		E_{\operatorname{rel}}(\nu,m,D|r,U)(t)\leq e^{CT \|\nabla U\|_{C([0,T]\times\T^d)}} E_{\operatorname{rel}}(\nu,m,D|r,U)(0)
	\end{align*}
	for a.e.~$t\in(0,T)$.
\end{theorem}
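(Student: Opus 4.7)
The plan is to apply Dafermos' relative entropy method, adapted to the mvs setting: I aim to derive
\begin{equation*}
E_{\mathrm{rel}}(t) \leq E_{\mathrm{rel}}(0) + C\,\|\nabla U\|_{C([0,T]\times\T^d)}\int_0^t E_{\mathrm{rel}}(s)\,ds,
\end{equation*}
after which Grönwall's lemma yields the exponential bound in the statement. (I read the kinetic part of the integrand as the standard $\tfrac12\rho|u-U|^2$, since otherwise block (a) below would not match the mvs energy (vi).) The first step is to expand $\tfrac12\rho|u-U|^2 = \tfrac12\rho|u|^2 - \rho u\cdot U + \tfrac12\rho|U|^2$ and split the integrand into three blocks: (a) the full mvs energy density $\tfrac12\rho|u|^2 + \tfrac{1}{\gamma-1}\rho^\gamma$, (b) the cross term $-\rho u\cdot U$, and (c) the $\rho$-linear terms $\tfrac12\rho|U|^2 - \tfrac{\gamma}{\gamma-1}\rho\, r^{\gamma-1}$. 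Block (a), together with $D(t)$, is controlled above by its value at $t=0$ via (vi). Blocks (b) and (c) are processed by feeding $\varphi(s,x) = U(s,x)\eta_\varepsilon(s)$ into the momentum equation (iv) and $\psi(s,x) = \bigl(\tfrac12|U(s,x)|^2 - \tfrac{\gamma}{\gamma-1}r(s,x)^{\gamma-1}\bigr)\eta_\varepsilon(s)$ into the continuity equation (v), where $\eta_\varepsilon \in C_c^\infty([0,T))$ approximates $\chi_{[0,t]}$; both test functions are admissible since $(r,U)\in C^1$ and $r > 0$, and the limit $\varepsilon\to 0$ is taken at a Lebesgue point of $D$.

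Next, I would replace the time derivatives $\partial_s U$ and $\partial_s r^{\gamma-1}$ arising from the previous step using the strong equations $\partial_t U + (U\cdot\nabla)U + \gamma r^{\gamma-2}\nabla r = 0$ and $\partial_t r + \operatorname{div}(rU) = 0$, thereby turning them into spatial expressions. After collecting terms and using the structural identity $p(\rho)=\rho P'(\rho) - P(\rho)$, with $P(\rho) := \rho^\gamma/(\gamma-1)$, the inequality should reduce to
\begin{equation*}
\begin{aligned}
E_{\mathrm{rel}}(t) - E_{\mathrm{rel}}(0) \leq\; &-\int_0^t\!\!\int_{\T^d}\bigl\langle\nu_{s,x},\rho(u-U)\otimes(u-U)\bigr\rangle:\nabla U\,dx\,ds\\
&-\int_0^t\!\!\int_{\T^d}\bigl\langle\nu_{s,x},P(\rho)-P(r)-P'(r)(\rho-r)\bigr\rangle\operatorname{div}U\,dx\,ds\\
&-\int_0^t\!\!\int_{\T^d}\nabla U : dm_s.
\end{aligned}
\end{equation*}
The first right-hand-side term is bounded in absolute value by $\|\nabla U\|_C$ times the relative kinetic energy; the second, by strict convexity of $P$ on $[0,\infty)$ together with $r>0$, by $\|\nabla U\|_C$ times the non-negative relative pressure potential; the third, by $\|\nabla U\|_C\,|m_s|(\overline{\T^d})\leq C\|\nabla U\|_C\, D(s)$ thanks to the structural estimate (iii). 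All three contributions are then dominated by $C\|\nabla U\|_C\, E_{\mathrm{rel}}(s)$, yielding the integral inequality.

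The main obstacle is the algebraic bookkeeping in the previous step: a significant number of terms arise from expanding $\langle \nu, \rho u\otimes u\rangle : \nabla U$, $\langle \nu, \rho^\gamma\rangle \operatorname{div}U$, $\langle \nu, \rho u\rangle \cdot\nabla\bigl(\tfrac12|U|^2 - \tfrac{\gamma}{\gamma-1}r^{\gamma-1}\bigr)$, and the strong-equation substitutions, and the cancellations that distill everything down to the three displayed quadratic-in-relative-variables terms are nontrivial, relying on the precise form of $P$. The hypothesis $r>0$ is used essentially (both to ensure smoothness of $\psi$ and to make sense of $\nabla r^{\gamma-1}=(\gamma-1)r^{\gamma-2}\nabla r$ uniformly on $[0,T]\times\T^d$), and the structural inequality $|m_s|(\overline{\T^d})\leq CD(s)$ is precisely what permits absorption of the concentration defect into $E_{\mathrm{rel}}$; without either, the argument would break down. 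Secondary technicalities, namely the $\eta_\varepsilon$-approximation and the choice of Lebesgue points of $D$, are routine.
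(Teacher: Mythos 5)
Your proposal is correct and follows essentially the same route as the paper's proof: the relative energy method, testing the momentum equation with $U$ and the continuity equation with $\tfrac12|U|^2$ and $\tfrac{\gamma}{\gamma-1}r^{\gamma-1}$ (you combine these into one test function, an inessential difference), substituting the strong equations, and reducing to the same three quadratic remainder terms (Reynolds-type term, relative pressure term, and the $\nabla U : \dd m_s$ term absorbed via condition iii)) before applying Gr\"onwall. Your reading of the kinetic part of the relative energy as $\tfrac12\rho|u-U|^2$ is indeed the intended one.
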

\begin{proof}
	We follow the proof given in \cite{FGSW17}.
	
	As a preparation we test the continuity equation with $\frac{1}{2}|U|^2$ and $\frac{\gamma}{\gamma-1} r^{\gamma-1}$ multiplied with an appropriate cut-off function in time to obtain
	\begin{align*}
		\int\limits_{0}^{\tau}\int\limits_{\T^d}^{}\langle\nu,\rho\rangle U\cdot \partial_t U+\langle\nu,\rho u\rangle\cdot \nabla U\cdot U\dx\dt=\int\limits_{\T^d}^{}\langle \nu,\rho\rangle(\tau)\frac{1}{2}|U|^2(\tau)\dx-\int\limits_{\T^d}^{}\rho_0\frac{1}{2}|U_0|^2\dx
	\end{align*}
	and
	\begin{align*}
		&\;\int\limits_{0}^{\tau}\int\limits_{\T^d}^{}\langle\nu,\rho\rangle \gamma r^{\gamma-2} \partial_t r+\langle\nu,\rho u\rangle\cdot\gamma r^{\gamma-2}\nabla r\dx\dt\\
		=&\;\int\limits_{\T^d}^{}\langle \nu,\rho\rangle(\tau)\frac{\gamma}{\gamma-1}r^{\gamma-1}(\tau)\dx-\int\limits_{\T^d}^{}\rho_0\frac{\gamma}{\gamma-1}r_0^{\gamma-1}\dx
	\end{align*}
	for a.e.~$\tau\in (0,T)$. Moreover, testing the momentum equation with $U$ yields
	\begin{align*}
		&\;\int\limits_{0}^{\tau}\int\limits_{\T^d}^{}\langle\nu,\rho u\rangle\cdot \partial_t U+\langle\nu,\rho u\otimes u\rangle:\nabla U+\langle\nu,\rho^{\gamma}\rangle\diverg U\dx\dt +\int\limits_{0}^{\tau}\int\limits_{\T^d}^{}\nabla U:\dd m_t\dt\\
		=&\;\int\limits_{\T^d}^{}\langle\nu,\rho u\rangle(\tau)\cdot U(\tau)\dx-\int\limits_{\T^d}^{}\rho_0u_0\cdot U_0\dx.
	\end{align*}
	Using these identities and the energy inequality for $\nu$, we obtain
	\begin{align*}
		&\;E_{\operatorname{rel}}(\nu,m,D|r,U)(\tau)\\
		\leq &\;E_{\operatorname{rel}}(\nu,m,D|r,U)(0)-\int\limits_{0}^{\tau}\int\limits_{\T^d}^{}\nabla U:\dd m_t\dt+\int\limits_{\T^d}^{}r^{\gamma}-r_0^{\gamma}\dx\\
		&\;-\int\limits_{0}^{\tau}\int\limits_{\T^d}^{}\langle\nu, \rho u\rangle\cdot \partial_t U+\langle\nu, \rho u\otimes u\rangle:\nabla U+\langle\nu,\rho^{\gamma}\rangle\diverg U\dx\dt\\
		&\;-\int\limits_{0}^{\tau}\int\limits_{\T^d}^{}\langle\nu,\rho \rangle \left(\gamma r^{\gamma-2}\partial_t r-U\cdot\partial_t U\right)+\langle\nu, \rho u\rangle \cdot\left(\gamma r^{\gamma-2}\nabla r-\nabla U\cdot U\right)\dx\dt\\
		=&\;E_{\operatorname{rel}}(\nu,m,D|r,U)(0)+C\|\nabla U\|_{C([0,T]\times \T^d)}\int\limits_{0}^{\tau}D(t)\dt\\
		&\;-\int\limits_{0}^{\tau}\int\limits_{\T^d}^{}\langle\nu,\rho^{\gamma}-r^{\gamma}-\gamma r^{\gamma-1}(\rho-\gamma)\rangle\diverg U\dx\dt\\
		&\;-\int\limits_{0}^{\tau}\int\limits_{\T^d}^{}\langle\nu, \rho u\rangle\cdot \partial_t U+\langle\nu, \rho u\otimes u\rangle:\nabla U\dx\dt\\
		&\;-\int\limits_{0}^{\tau}\int\limits_{\T^d}^{}\langle\nu,\rho U\rangle \cdot\left(-\gamma r^{\gamma-2} \nabla r-\partial_t U\right)+\langle\nu, \rho u\rangle \cdot\left(\gamma r^{\gamma-2}\nabla r-\nabla U\cdot U\right)\dx\dt,
	\end{align*}
	where in the last equality we also used $\partial_t r+r\diverg U+U\cdot\nabla r=0$. Now, by the momentum equation we have
	\begin{align*}
		\partial_t U+(U\cdot\nabla)U+\gamma r^{\gamma-2}\nabla r=0
	\end{align*}
	for $U$. Hence, we get for a.e.~$\tau\in (0,T)$
	\begin{align*}
		&\;E_{\operatorname{rel}}(\nu,m,D|r,U)(\tau)\\
		\leq &\;E_{\operatorname{rel}}(\nu,m,D|r,U)(0)+C\|\nabla U\|_{C([0,T]\times \T^d)}\int\limits_{0}^{\tau}D(t)\dt\\
		&\;-\int\limits_{0}^{\tau}\int\limits_{\T^d}^{}\langle\nu,\rho^{\gamma}-r^{\gamma}-\gamma r^{\gamma-1}(\rho-\gamma)\rangle\diverg U\dx\dt\\
		&\;+\int\limits_{0}^{\tau}\int\limits_{\T^d}^{}\langle\nu,\rho u\rangle\cdot \nabla U\cdot U+\langle\nu,\rho U\cdot \nabla U\cdot u\rangle\\
		&\;-\int\limits_{0}^{\tau}\int\limits_{\T^d}^{}\langle\nu, \rho u\otimes u\rangle:\nabla U+\langle\nu,\rho U\rangle\cdot\nabla U\cdot U\dx\dt\\
		=&\;E_{\operatorname{rel}}(\nu,m,D|r,U)(0)+C\|\nabla U\|_{C([0,T]\times \T^d)}\int\limits_{0}^{\tau}D(t)\dt\\
		&\;-\int\limits_{0}^{\tau}\int\limits_{\T^d}^{}\langle\nu,\rho^{\gamma}-r^{\gamma}-\gamma r^{\gamma-1}(\rho-\gamma)\rangle\diverg U\dx\dt\\
		&\;-\int\limits_{0}^{\tau}\int\limits_{\T^d}^{}\langle\nu,\rho(u-U)\cdot\nabla U\cdot (u-U)\rangle\dx\dt\\
		\leq &\;E_{\operatorname{rel}}(\nu,m,D|r,U)(0)+C\|\nabla U\|_{C([0,T]\times \T^d)}\int\limits_{0}^{\tau}E_{\operatorname{rel}}(\nu,m,D|r,U)(t)\dt
	\end{align*}
	for some $C>0$ only depending on the dimension $d$ and the constant in \Cref{def: mvsCE} iii). The claim now follows by Grönwall's inequality.
\end{proof}

\subsection{Measure-Valued Singular Limits}
For the existence of measure-valued solutions, we have seen that the viscosity limit corresponding to fixed initial data generates a measure-valued solution. Moreover, every Young measure can be generated by some sequence of measurable functions. So, a natural question to ask is whether every measure-valued solution might come from a sequence of (approximate) weak solutions, e.g.~a vanishing viscosity sequence. Other possibilities might be the limit of numerical schemes, low Mach limits, or limits of weak solutions. Note that it is of course not included in the definition of measure-valued solutions that they are generated by approximate solutions.

In the incompressible case, it is a surprising and highly non-trivial fact that every measure-valued solution comes from a sequence of weak solutions, cf.~\cite{SW12} for a proof.
\begin{theorem}\label{thm: incompressiblemvs}
	Let $\nu$ be an incompressible dissipative measure-valued solution on $\R^d$ with initial data $u_0\in L^2(\R^d)$. Then there exists a sequence of energy admissible weak solutions $(u_n)_{n\in\N}$ with initial data $(u_n^0)_{n\in\N}$ such that $u^0_n\rightarrow u_0$ in $L^2(\R^d)$ and such that $(u_n)_{n\in\N}$ generates $\nu$, i.e.
	\begin{align*}
		u_n\overset{Y}{\rightharpoonup}\nu.
	\end{align*}
\end{theorem}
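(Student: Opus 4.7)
The plan is to combine the structure of dissipative measure-valued solutions with the convex integration machinery of De Lellis and Székelyhidi to construct an approximating sequence of weak solutions. The construction proceeds in three stages: a reduction to ``atomic'' Young measures, a convex integration scheme producing weak solutions realizing each such atomic measure, and a diagonal argument.

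First I would reduce to a tractable class of Young measures. By a density argument in $L^\infty_{\operatorname{w}}((0,T)\times\R^d;\mathcal{P}(\R^d))$, one may approximate $\nu$ by a sequence $(\nu^k)$ whose slices $\nu^k_{t,x}$ are finitely supported probability measures $\sum_i \lambda_i^k(t,x)\,\delta_{v_i^k(t,x)}$ with piecewise constant dependence on $(t,x)$ on a dyadic grid. Since the momentum equation, divergence-free condition, and energy inequality only involve the first and second moments of $\nu$, these constraints are preserved under the approximation up to small errors that may be absorbed into the defects $m$ and $D$. A simultaneous regularization of the initial datum $u_0$ yields a compatible sequence $u_n^0 \to u_0$ in $L^2$.

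The core step is then to realize each atomic measure-valued solution as a weak Young limit of weak solutions. Writing the barycenter $\bar u := \langle \nu^k_{t,x}, \operatorname{id}\rangle$ and Reynolds stress $R := \langle \nu^k_{t,x}, \operatorname{id}\otimes\operatorname{id}\rangle - \bar u \otimes \bar u$, together with an effective pressure, gives a subsolution of the Euler equations in the sense of De Lellis--Székelyhidi, with the trace of $R$ (plus the extra defect $D$) bounded by the energy drop between the initial and current kinetic energy. The h-principle then says that, within the space of subsolutions endowed with a weak $L^2$ topology, the subset of genuine weak solutions is residual. One applies this on each cell of the dyadic grid, using plane-wave perturbations directed along the extreme points $v_i^k$ of the convex combination and with temporal/spatial density $\lambda_i^k$, so that the resulting weak solution $u_{n,k}$ satisfies $u_{n,k} \overset{Y}{\rightharpoonup} \nu^k$ as $n\to\infty$. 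Energy admissibility is built in because the convex integration increments transfer the missing energy $D(t)$ from the subsolution into genuine kinetic energy oscillations, while the energy inequality for $\nu^k$ bounds the total kinetic energy by that of $u^0_n$.

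A standard metrizability argument for Young measure convergence on bounded sets of $L^2$ then permits a diagonal extraction $u_n := u_{n,k(n)}$ with $u_n \overset{Y}{\rightharpoonup} \nu$ and $u_n^0 \to u_0$ in $L^2$. The hard part will be the second stage: the usual De Lellis--Székelyhidi convex integration produces weak solutions matching only the first and second moments of a target, while here one must reproduce the full one-point distribution $\nu_{t,x}$. This forces a refined inductive scheme in which the amplitudes of the high-frequency building blocks at each generation are chosen so that their empirical distribution on fine scales converges to $\nu^k_{t,x}$, which is the technical heart of the argument in \cite{SW12} and depends on the rich supply of plane-wave solutions available in the incompressible setting in dimension $d\geq 2$.
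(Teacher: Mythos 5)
The paper itself does not prove this theorem: it states it and refers to \cite{SW12}, explicitly noting that the proof there uses a different (DiPerna--Majda type) formulation of concentration measures. So your proposal can only be measured against the strategy of \cite{SW12}. Your outline points in the right general direction --- reduction to piecewise-constant atomic measures, realization by convex integration, diagonal extraction --- but the step you yourself flag as ``the technical heart'' is precisely the content of the theorem, and as described it is not an argument. Saying that the amplitudes of the high-frequency building blocks can be ``chosen so that their empirical distribution converges to $\nu^k_{t,x}$'' restates the goal: the De Lellis--Sz\'ekelyhidi h-principle yields a residual set of weak solutions near a given subsolution, but gives no control over which Young measure any particular sequence of them generates. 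The mechanism in \cite{SW12} is structural rather than constructive in your sense: one first shows that the set of (generalized) Young measures generated by sequences of weak solutions with convergent initial data is convex and weak-* closed; one then reduces to homogeneous (constant-in-$(t,x)$) measures; and one exploits the fact that for the Tartar-type linear relaxation of the incompressible system the wave cone is large enough that every admissible probability measure with prescribed barycenter and energy is a limit of laminates, each elementary splitting being realized by a localized plane-wave oscillation between two wave-cone-connected states plus a convex integration correction. This is also exactly why the analogous statement fails in the compressible case (Theorem~\ref{thm:nongenerablemvs}): there the wave cone is smaller and a nontrivial Jensen-type condition obstructs the laminate decomposition --- a point your sketch gestures at only vaguely via the ``rich supply of plane waves''.

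Two further concrete gaps. First, a dissipative measure-valued solution carries the concentration measure $m$ and the defect $D$, and the generating sequence of weak solutions must reproduce these as well, since the kinetic energy of $u_n$ cannot simply vanish in the limit; your sketch treats only the oscillation part $\nu$, whereas \cite{SW12} works with generalized Young measures precisely to capture $m$ and $D$. Second, in your reduction step you absorb the errors of the atomic approximation ``into the defects $m$ and $D$''; but then the diagonal sequence generates the perturbed objects, and you must additionally show that these perturbed defects converge back to the original ones in a topology compatible with Young-measure generation --- otherwise the diagonal sequence generates some modification of $\nu$ rather than $\nu$ itself.
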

In fact, this result of Sz{\'e}kelyhidi-Wiedemann uses a different formulation of concentration measures, see \cite{SW12}. But we do not want to go deeper into this topic.

The surprising consequence, anyway, of the above theorem is that incompressible measure-valued solutions and weak solutions contain essentially the same information although measure-valued solutions are an a priori much weaker notion of solution, since only the first and second moment of the measure are constrained.

So, what about the compressible situation? In fact, the case of the compressible Euler system is quite different and also more complicated. As a preparation we have the following meta-result.
\begin{theorem}\label{thm:nongenerablemvs}
	For space dimension $d\geq 2$, not every oscillation measure-valued solution of the compressible Euler equations comes from a sequence of weak solutions.
\end{theorem}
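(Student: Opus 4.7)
The plan is to produce a concrete oscillation measure-valued solution $\nu$ of \eqref{eq: CE} which is provably not attainable as a Young-measure limit of any sequence of weak solutions. The heart of the matter is that, in contrast to the incompressible case (Theorem~\ref{thm: incompressiblemvs}), where the pressure acts as a free Lagrange multiplier and every energy-bounded mvs is reached by convex integration \cite{SW12}, the compressible pressure law $p(\rho)=\rho^\gamma$ is strictly convex and fixed, imposing rigid compatibility conditions on any approximating sequence of weak solutions.

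For the construction, I would invoke the non-uniqueness results for energy-admissible weak solutions in dimension $d\geq 2$ established by convex integration (see \cite{Chio, CDLK, CKMS, CVY}): there exist initial data $(\rho_0,u_0)$ admitting two distinct dissipative weak solutions $Z^{(i)}=(\rho^{(i)},u^{(i)})$, $i=1,2$, with $\rho^{(1)}\not\equiv \rho^{(2)}$ (such data cannot admit a $C^1$ classical solution, by weak-strong uniqueness). Then set
\[
\nu_{t,x}:=\tfrac12\,\delta_{Z^{(1)}(t,x)}+\tfrac12\,\delta_{Z^{(2)}(t,x)}.
\]
Conditions i)--v) of \Cref{def: mvsCE} hold with $m=0$ simply by linearity of the weak formulation, and the energy inequality vi) with $D=0$ follows by averaging the two individual energy inequalities satisfied by $Z^{(1)}$ and $Z^{(2)}$. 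Hence $\nu$ is a bona fide oscillation measure-valued solution.

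For the non-generability, I would argue by contradiction: assume a sequence of weak solutions $(\rho_n,u_n)\overset{Y}{\rightharpoonup}\nu$. The push-forward of $\nu_{t,x}$ under $(\rho,u)\mapsto \rho$ equals $\tfrac12\delta_{\rho^{(1)}(t,x)}+\tfrac12\delta_{\rho^{(2)}(t,x)}$, which is genuinely non-Dirac on a set of positive measure, so $\rho_n$ must oscillate between values close to $\rho^{(1)}$ and $\rho^{(2)}$. Such oscillations can be produced by a sequence of weak solutions only if the jump direction $Z^{(1)}-Z^{(2)}$ lies in the \emph{compressible wave cone}; I would then verify by a direct algebraic computation, exploiting that the pressure depends nonlinearly on $\rho$ alone, that a jump with $\rho^{(1)}\neq \rho^{(2)}$ cannot be purely tangent to this cone. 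Consequently a positive defect in $\rho_n u_n\otimes u_n$ or $\rho_n^\gamma$ arises in the weak-$*$ limit of Radon measures, which must be absorbed by $m$ in Definition~\ref{def: mvsCE} iv). This contradicts $m=0$.

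The principal obstacle lies precisely in this last step: quantifying these defects and ruling out any algebraic cancellation between the pressure and convective contributions. Equivalently, one must prove that no fine-scale rank-one laminate between the states $Z^{(1)}$ and $Z^{(2)}$ can itself be an approximate family of weak solutions --- in sharp contrast to the incompressible case \cite{SW12}, where the pressure's role as a free Lagrange multiplier allows such a laminate to close. The algebraic asymmetry between the compressible and incompressible wave cones is the root cause of the failure of generability in $d\geq 2$.
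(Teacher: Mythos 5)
Your construction of the oscillation measure-valued solution itself is fine (convex combinations of Dirac solutions do satisfy \Cref{def: mvsCE} with $m=0$, $D=0$, by linearity in the nonlinear quantities), and you have correctly identified the general mechanism the paper uses, namely a Ball--James-type rigidity with respect to the wave cone of the linearly relaxed system. But the step you defer as a ``direct algebraic computation'' is where the argument breaks, and it breaks for a structural reason, not a technical one. Wave-cone connectedness must be checked for the \emph{lifted} states $\Theta(Z^{(1)})-\Theta(Z^{(2)})\in\Lambda_{\mathcal{A}_E}$, and for a jump with $\delta\rho\neq 0$ the condition reads: there exists $\xi\neq 0$ with $\bigl(\delta M+\delta Q\,\mathbb{E}_d-\tfrac{\delta m\otimes\delta m}{\delta\rho}\bigr)\xi=0$ (having eliminated $\tau$ via the continuity symbol). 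Unwinding the lift, this is exactly the Rankine--Hugoniot condition for the pair of states with shock normal $\xi$ and speed $-\tau/|\xi|$. Since compressive shocks with $\rho^{(1)}\neq\rho^{(2)}$ exist in abundance, your claim that a density jump can never be tangent to the wave cone is false; wave-cone connectedness of two Euler states with distinct densities is a genuinely codimension-one condition that holds on a large set of pairs. Worse, your two wells $Z^{(1)}(t,x),Z^{(2)}(t,x)$ come from convex integration, which manufactures solutions precisely by superposing plane waves along wave-cone directions, so there is every reason to expect the pointwise differences to be wave-cone compatible on large sets; and even where they are not, the wells vary with $(t,x)$, whereas the two-well rigidity theorem one actually has (the $\mathcal{A}$-free rigidity of \cite{CFKW17}, in the spirit of \cite{BJ87}) is for \emph{constant} wells.

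This is why the proof in \cite{CFKW17} (and its refinement \cite{GW21}) runs differently: one constructs a \emph{constant-in-$(t,x)$} measure-valued solution $\nu\equiv\tfrac12\delta_{z_1}+\tfrac12\delta_{z_2}$ supported on two explicitly chosen constant states $z_1,z_2$ whose lifted difference is \emph{verified} not to lie in $\Lambda_{\mathcal{A}_E}$ (e.g.\ two rest states of different density already give $\det(\delta Q\,\mathbb{E}_d)=(\rho_1^\gamma-\rho_2^\gamma)^d\neq 0$), checks that the constant barycenter trivially solves the relaxed system, and then applies the constant-two-well rigidity to conclude that any uniformly bounded $\mathcal{A}_E$-free generating sequence must converge in measure to a single Dirac. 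Note also that your proposed contradiction mechanism --- a concentration defect forced into $m$ --- is not the one at work: under the uniform bounds in which the rigidity operates there is no concentration at all; the obstruction is that oscillation between non-wave-cone-connected states is outright impossible, not that it produces a defect measure.
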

This has first been observed in \cite{CFKW17}, where a constant measure-valued solution consisting of two Dirac measures is constructed such that it cannot be generated by weak solutions. The idea of \cite{CFKW17} has been developed further in \cite{GW21} to obtain the result that there exists even deterministic initial data, which evolves classically up to some finite time and then bursts into actually infinitely many oscillation measure-valued solutions that cannot be generated by weak solutions.\\
The idea of the proof for both versions of the above theorem relies on a compensated compactness rigidity argument in the spirit of Ball and James~\cite{BJ87}. For this, the rank-one connectedness in \cite{BJ87} corresponds to the \textit{wave-cone connectedness} of the underlying Young measures. Here, we consider the calculus of variations framework of \textit{linear homogeneous differential operators} $\mathcal{A}$ of \textit{order} $k\in\N$, i.e.
\begin{align*}
	\mathcal{A}=\sum\limits_{|\alpha|=k}^{}A^{\alpha}\partial_{\alpha}
\end{align*}
with $A^{\alpha}$ constant coefficient matrices acting on the state space $\R^N$. This framework was pioneered by Tartar, cf.~\cite{T79}. The corresponding Fourier symbol of $\mathcal{A}$ is defined for $\xi\in\R^d$ as
\begin{align*}
	\mathbb{A}(\xi)=\sum_{|\alpha|=k}^{}\xi_{\alpha}A^{\alpha}.
\end{align*}
The \textit{wave-cone} of $\mathcal{A}$ is defined as
\begin{align*}
	\Lambda_{\mathcal{A}}=\underset{|\xi|=1}{\bigcup}\ker\mathbb{A}(\xi).
\end{align*}
Moreover, two vectors $z_1,z_2\in \R^N$ are called \textit{wave-cone connected} if
\begin{align*}
	z_1-z_2\in\Lambda_{\mathcal{A}}.
\end{align*}
Now the compressible Euler system can be relaxed to a linear homogeneous system of PDEs of order one. The corresponding linear operator is obtained via simple substitution of the non-linear terms by new variables. We call this the \textit{linearly relaxed Euler system}
\begin{align*}
	\partial_t m+\diverg M+\nabla Q&= 0,\\
	\partial_t\rho + \diverg m&= 0
\end{align*}
for the variables $(\rho,m,M,Q)\in [0,\infty)\times \R^d\times S_0^d\times \R$, where $S_0^d$ denotes the symmetric $(d\times d)$-matrices with zero trace. The relaxed system now directly corresponds to a linear homogeneous differential operator $\mathcal{A}_E$ of order one. The transition from the non-linear to the linearized system is achieved through the \textit{lift map}
\begin{align*}
	\Theta\colon (\rho,m)\mapsto \left(\rho,m,\frac{m\otimes m}{\rho}-\frac{|m|^2}{\rho d}\mathbb{E}_d,\rho^{\gamma}+\frac{|m|^2}{\rho d}\right),
\end{align*}
where $\mathbb{E}_d$ is the unit matrix in $d$ dimensions. Here, for convenience, we used the momentum $m=\rho u$ as a primitive variable instead of the velocity $u$. Anyway, this lift map $\Theta$ encodes the non-linear pointwise constraints that together with the linear operator $\mathcal{A}_E$ yields the non-linear compressible Euler system.

Another consequence of considering the linearized Euler system $\mathcal{A}_E$ is the following necessary condition, see \cite{CFKW17} and \cite{GW22}. This follows from the seminal work of Fonseca-Müller \cite{FM99} which is a generalization of the work of Kinderlehrer-Pedregal \cite{KP91,KP94}. In fact, one may also infer Theorem~\ref{thm:nongenerablemvs} from Theorem~\ref{thm:necessarycondition}.
\begin{theorem}\label{thm:necessarycondition}
	Suppose $\nu$ is a Young measure which is generated by a uniformly bounded sequence of energy admissible weak solutions $(\rho_n,m_n)_{n\in\N}$ of the compressible Euler system over $(0,T)\times \T^d$. Further, assume that the initial data satisfy $(\rho_n^0,m_n^0)\rightarrow (\rho_0,m_0)$ in $L^{\gamma}\times L^2$ and assume that there exists $\eta>0$ such that $\rho_n\geq \eta$ for all $n\in \N$.
	
	Then $\nu$ is a dissipative oscillation measure-valued solution with initial data $(\rho_0,m_0)$ and the Jensen-type condition
	\begin{align*}
		\left\langle\Theta_{\sharp}\nu,f\right\rangle\geq Q_{\mathcal{A}_E}f\left(\left\langle\Theta_{\mathcal{A}_E}\nu,\operatorname{id}\right\rangle \right)
	\end{align*}
	holds on a set of full measure and for all $f\in C(\R^N)$.
\end{theorem}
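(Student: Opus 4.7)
The plan is to lift the pair $(\rho_n,m_n)$ via $\Theta$ to a sequence that is free with respect to the linearized operator $\mathcal{A}_E$, then combine equi-integrability arguments with the Fonseca-M\"uller characterization of $\mathcal{A}$-free Young measures to obtain both the measure-valued solution property and the Jensen-type inequality in one package. To this end, I set $V_n := \Theta(\rho_n,m_n) = (\rho_n,m_n,M_n,Q_n)$; by construction $M_n + Q_n \mathbb{E}_d = \rho_n^{-1} m_n\otimes m_n + \rho_n^\gamma \mathbb{E}_d$, so the momentum and continuity equations for $(\rho_n,m_n)$ rewrite \emph{exactly} as the linear system $\mathcal{A}_E V_n = 0$ in the sense of distributions on $(0,T) \times \T^d$. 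The uniform bound on $(\rho_n,m_n)$ together with the lower bound $\rho_n \geq \eta$ keep $V_n$ uniformly bounded in $L^\infty$, and since $\Theta$ is continuous on $[\eta,\infty) \times \R^d$, a subsequence of $V_n$ generates $\tilde\nu = \Theta_\sharp\nu$ by Theorem~\ref{thm: FundamentalYM}.

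Next I would verify that $\nu$ is an oscillation dissipative measure-valued solution with initial data $(\rho_0,m_0)$. The crucial observation is that the nonlinear fluxes $\rho_n u_n\otimes u_n$ and $\rho_n^\gamma$ appear as uniformly bounded components of $V_n$, so they are equi-integrable and converge weakly-$\ast$ to $\langle\nu,\rho u\otimes u\rangle$ and $\langle\nu,\rho^\gamma\rangle$ without concentrations; thus $m=0$ and $D=0$ in Definition~\ref{def: mvsCE}. The continuity and momentum equations then pass to the limit from the corresponding identities for $V_n$, the initial data condition follows from $(\rho_n^0,m_n^0) \to (\rho_0,m_0)$ in $L^\gamma \times L^2$, and the energy inequality for $\nu$ is inherited by lower semicontinuity of the convex energy integrand under Young measure generation.

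For the Jensen-type inequality, I would then invoke the Fonseca-M\"uller theorem. Since $(V_n)$ is uniformly bounded in $L^\infty$ and $\mathcal{A}_E$-free, $\tilde\nu = \Theta_\sharp\nu$ is an $\mathcal{A}_E$-free Young measure in their sense, and their characterization gives $\langle\Theta_\sharp\nu, f\rangle \geq Q_{\mathcal{A}_E} f(\langle\Theta_\sharp\nu, \operatorname{id}\rangle)$ at almost every $(t,x)$ for all $f \in C(\R^N)$. The main obstacle, in my view, is to verify the hypotheses of Fonseca-M\"uller in this precise setting: one must localize the $\mathcal{A}_E$-freeness away from the time boundary $t=0$ (typically via a cutoff in time producing a commutator error that vanishes in $L^1$ thanks to the uniform $L^\infty$ bound on $V_n$), identify the correct notion of $\mathcal{A}_E$-quasiconvex envelope for the space-time operator, and extend the inequality from $\mathcal{A}_E$-quasiconvex or bounded continuous $f$ to all of $C(\R^N)$ -- this last step is where the compact support of $\tilde\nu$ (forced by the uniform $L^\infty$ bound) becomes essential, since only on that support does any continuous $f$ behave like a bounded admissible test function.
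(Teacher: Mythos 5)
Your proposal is correct and follows essentially the same route the paper indicates for this result: lift the uniformly bounded, vacuum-free sequence via $\Theta$ to an $\mathcal{A}_E$-free sequence, read off the oscillation measure-valued solution property from the absence of concentrations under the $L^\infty$ bound, and obtain the Jensen-type inequality from the Fonseca--M\"uller characterization of Young measures generated by $\mathcal{A}$-free sequences (\cite{FM99}, as applied in \cite{CFKW17,GW22}). The technical caveats you flag (localization away from $t=0$, and the compact support of $\Theta_\sharp\nu$ making every $f\in C(\R^N)$ an admissible test function) are precisely the points handled in those references.
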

We need to clarify some notation:

The \textit{lifted measure} $\Theta_{\sharp}\nu$ is defined as the pushforward measure
\begin{align*}
	\left\langle\Theta_{\sharp}\nu,f\right\rangle=\langle\nu,f\circ \Theta\rangle\text{ for }f\in C(\R^N).
\end{align*}
As a consequence, $\nu$ is an oscillation measure-valued solution of the compressible Euler system if and only if the corresponding lifted measure satisfies the linear constraint
\begin{align*}
	\mathcal{A}_E\langle\Theta_{\sharp}\nu,\operatorname{id}\rangle=0.
\end{align*}
Moreover, for any linear homogeneous differential operator $\mathcal{A}$ we say that a function $f\in C(\R^N)$ is $\mathcal{A}$-\textit{quasiconvex} if for all $z\in \R^N$
\begin{align*}
	\int\limits_{\T^d}^{}f(z+w(x))\dx\geq f(z)
\end{align*}
holds for all $w\in C^{\infty}(\T^d,\R^N)$ with $\mathcal{A}w=0$ and $\int\limits_{\T^d}^{}w(x)\dx=0$. Note that for $\mathcal{A}=0$ this corresponds to $f$ being just convex.

For a given function $f\in C(\R^N)$, the largest $\mathcal{A}$-quasiconvex function below $f$ is the $\mathcal{A}$-\textit{quasiconvex envelope} $Q_{\mathcal{A}}f$ defined by
\begin{align*}
	Q_{\mathcal{A}}f(z)=\inf\left\{\avint_{\T^d}f(z+w(x))\dx\,:\, w\in C^{\infty}(\T^d)\cap \ker\mathcal{A},\ \int\limits_{\T^d}^{}w(x)\dx=0 \right\}.
\end{align*}
Let $\mathcal{B}$ be a \textit{potential operator} for $\mathcal{A}$, i.e.~the linear homogeneous differential operator $\mathcal{B}$ of order $l\in\N$ satisfies
\begin{align*}
	\ker\mathbb{A}(\xi)=\operatorname{image}\mathbb{B}(\xi)\text{ for }\xi\in\R^d\backslash\{0\}.
\end{align*}
Then for $q>0$ and $z\in\R^N$ we define the \textit{truncated quasiconvex envelope} $Q_{\mathcal{B}}^qf$ by
\begin{align*}
	Q_{\mathcal{B}}^qf(z)=\inf\left\{\int\limits_{(0,1)^d}^{}f(z+\mathcal{B}\varphi(x))\dx\,:\, \varphi\in C_c^{\infty}((0,1)^d),\ \|D^l\varphi\|_{L^{\infty}}\leq q \right\}.
\end{align*}
A direct consequence of the above definitions is that for all $f\in C(\R^N)$ and $q>0$ it holds that
\begin{align*}
	Q_{\mathcal{A}}f\leq Q_{\mathcal{B}}^qf\leq f.
\end{align*}
See, e.g., \cite{GW22} and \cite{FM99} for more on (truncated) quasiconvex envelopes.

The above Theorem~\ref{thm:necessarycondition} suggests the following selection criterion for measure-valued solutions:

\emph{A measure-valued solution may be discarded as unphysical if it is not generated by a sequence of weak solutions.}

In fact, the selection via singular limits is a common way to identify unphysical solutions, see \cite{BTW12} for the vanishing viscosity case and \cite{G22} for the case of low Mach number limits.

Let us quickly review the incompressible situation. One can also derive a necessary Jensen condition in this case. However, since for the corresponding linear operator every quasiconvex function is already convex, this Jensen condition is trivially fulfilled by the classical Jensen inequality, cf.~Remark~4 in \cite{CFKW17}. Therefore, we do not get a contradiction from the fact that every incompressible measure-valued solution is generated by weak solutions.

We have learned that compressible measure-valued solutions which can be generated by weak solutions necessarily need to satisfy a Jensen-type condition. But what about sufficient conditions? It turns out that a related Jensen condition among some other structural conditions only on the measure are sufficient. This is stated in the following theorem from \cite{GW22}.
\begin{theorem}
	Let $T>0$ and $d\geq 2$. Let $\nu$ be a dissipative oscillation measure-valued solution of the compressible Euler system on $(0,T)\times \T^d$ with initial data $(\rho_0,m_0)\in L^{\infty}(\T^d)$ and adiabatic exponent $\gamma=1+\frac{2}{d}$. Suppose $\nu$ satisfies the following conditions:
	\begin{itemize}
		\item There exists $R>0$ such that
		\begin{align*}
			\langle \Theta_{\sharp}\nu,f\rangle\geq Q_{\mathcal{B}_E}^Rf(\langle\Theta_{\sharp}\nu,\operatorname{id}\rangle)
		\end{align*}
		holds on a set of full measure for all $f\in C([0,\infty)\times \R^d\times S_0^d\times \R)$.
		\item There exists $\eta>0$ such that
		\begin{align*}
			\operatorname{supp}\left((\Theta_{\sharp}\nu)_{(t,x)}\right)\subset \left\{(\rho,m,M,Q)\,:\, \rho\geq \eta\text{ and }Q=\left\langle\nu_{(t,x)},\frac{1}{d}\frac{|m|^2}{\rho}+\rho^{\gamma}\right\rangle \right\} 
		\end{align*}
		holds for a.e.~$(t,x)\in (0,T)\times \T^d$.
		\item There exist $w\in W^{2,\infty}((0,T)\times\T^d)$ and $\sigma\in C([0,T]\times \T^d)$ such that the barycenter of the lifted measure satisfies
		\begin{align*}
			\langle\Theta_{\sharp}\nu,\operatorname{id}\rangle=\sigma+\mathcal{B}w.
		\end{align*}
		\item The map $(t,x)\mapsto \left\langle\nu_{(t,x)},\frac{1}{d}\frac{|m|^2}{\rho}+\rho^{\gamma}\right\rangle$ is continuous on $[0,T]\times \T^d$.
	\end{itemize}
	Then $\nu$ is generated by a uniformly bounded sequence of energy admissible weak solutions $(\rho_n,m_n)_{n\in\N}$ such that for all $n\in \N$ it holds that
	\begin{align*}
		\rho_n&\geq \tilde{\eta},\\
		\|\rho_n(0,\cdot)-\rho_0\|_{L^{\gamma}}&\leq \frac{1}{n},\\
		\|m_n(0,\cdot)-m_0\|_{L^2}&\leq \frac{1}{n}
	\end{align*}
	for some fixed $\tilde{\eta}>0$.
\end{theorem}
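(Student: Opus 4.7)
The proof is a convex integration construction adapted to the linearly relaxed Euler operator $\mathcal{A}_E$ through its potential $\mathcal{B}_E$, in the spirit of \cite{CFKW17,GW21,GW22}. The plan combines two ingredients: first, a Fonseca--Müller style generation of the lifted Young measure $\Theta_\sharp\nu$ by uniformly bounded $\mathcal{A}_E$-free sequences (this is exactly what the truncated Jensen condition $\langle\Theta_\sharp\nu,f\rangle\geq Q^R_{\mathcal{B}_E}f(\langle\Theta_\sharp\nu,\id\rangle)$ is designed to encode); and second, an oscillation procedure along the wave cone $\Lambda_{\mathcal{A}_E}$ that forces the approximants to take values in the nonlinear constraint set $\mathrm{image}(\Theta)$, thereby turning them into actual weak solutions of the isentropic Euler system.

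I would start by using the decomposition $\langle\Theta_\sharp\nu,\id\rangle=\sigma+\mathcal{B}_E w$ together with the continuity of $\sigma$ and of the energy density $(t,x)\mapsto\langle\nu_{(t,x)},\tfrac{1}{d}|m|^2/\rho+\rho^\gamma\rangle$ to partition $(0,T)\times\T^d$ into small cubes on which the lifted barycenter is nearly constant. On each cube, the truncated Jensen condition supplies admissible test potentials $\varphi_k\in C_c^\infty$ with $\|D^l\varphi_k\|_{L^\infty}\leq R$ whose pushforwards $(\mathcal{B}_E\varphi_k)_\sharp\mathcal{L}$ approximate $\Theta_\sharp\nu_{(t,x)}$ in the Young-measure sense against a countable dense family in $C(\mathbb{R}^N)$. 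Summing these local corrections via a smooth partition of unity, I obtain a globally defined $\mathcal{A}_E$-free sequence $V_\eps=\sigma+\mathcal{B}_E(w+\tilde\varphi_\eps)$ that generates $\Theta_\sharp\nu$, is uniformly bounded in $L^\infty$, and whose density component is bounded below by roughly $\eta/2$ thanks to the support condition on $\nu$.

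The more delicate step is to modify $V_\eps$ pointwise so that it lies in $\mathrm{image}(\Theta)$ without altering the generated Young measure. For this I iterate a De~Lellis--Sz\'ekelyhidi type oscillation along suitable directions in $\Lambda_{\mathcal{A}_E}$: at each stage I add a high-frequency $\mathcal{A}_E$-free perturbation supported in a small spatio-temporal tube where $V_\eps$ is far from the nonlinear manifold, contracting the distance geometrically. The specific choice $\gamma=1+\tfrac{2}{d}$ enters here, as it is the exponent at which the homogeneities of the kinetic term $|m|^2/\rho$ and the pressure term $\rho^\gamma$ in $\Theta$ are compatible with the scaling of the wave cone, so that transversality between $\Lambda_{\mathcal{A}_E}$ and the constraint set holds uniformly on the support of $\Theta_\sharp\nu$ (cf.\ the rigidity analysis in \cite{CFKW17,GW22}). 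A diagonal subsequence then yields a sequence of energy-admissible weak solutions $(\rho_n,m_n)$ generating $\nu$; the initial-data convergence in $L^\gamma\times L^2$ at rate $1/n$ is achieved by a routine mollification of the perturbations near $t=0$, using the smoothness of $w$ to handle the linear part of the barycenter.

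The principal difficulty is that one must simultaneously satisfy three tightly interacting constraints: matching the full Young measure $\nu$ rather than only its first moment, staying on the nonlinear manifold $\mathrm{image}(\Theta)$, and preserving both the strict density lower bound $\rho_n\geq\tilde\eta$ and the energy admissibility inequality. Standard convex integration handles any two of these, but combining all three requires interleaving the Jensen-based generation step with the wave-cone oscillation step and choosing the oscillation amplitudes carefully, small enough relative to $\eta$ to avoid vacuum and energy excess, yet large enough across infinitely many stages to exhaust all moments of $\nu$ in the limit.
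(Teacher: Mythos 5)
Your two-step architecture --- (i) use the truncated Jensen condition to produce, via a Fonseca--M\"uller type characterization, a uniformly bounded $\mathcal{A}_E$-free sequence generating the lifted measure $\Theta_{\sharp}\nu$, and (ii) run a convex integration scheme along the wave cone to pass from these relaxed solutions back to genuine energy-admissible weak solutions generating $\nu$ --- is exactly the strategy the paper sketches (generation step in the spirit of Fonseca--M\"uller from \cite{G22}, then the M\"uller--Zhang truncation of \cite{G21} to prepare the generating sequence, then the convex integration result of \cite{DSW21}). Two points where your account diverges. First, the paper inserts an explicit truncation step between generation and convex integration; your partition-of-unity gluing of local potentials with $\|D^l\varphi_k\|_{L^\infty}\leq R$ is adjacent to, but does not replace, this step. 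Second, and more substantively, your explanation of why $\gamma=1+\frac{2}{d}$ is needed (a ``transversality'' between $\Lambda_{\mathcal{A}_E}$ and the constraint set) is not the reason the paper gives. The actual role of this exponent is that it makes the generalized pressure coordinate $Q$ of the lift $\Theta$ coincide with the energy density $\frac{1}{2}\frac{|m|^2}{\rho}+\frac{1}{\gamma-1}\rho^{\gamma}$ up to the constant factor $\frac{d}{2}$; together with the support condition $Q=\langle\nu_{(t,x)},\frac{1}{d}\frac{|m|^2}{\rho}+\rho^{\gamma}\rangle$ and the assumed continuity of the energy map, this is precisely what allows the prescribed-energy convex integration of \cite{DSW21} to be applied. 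Your sketch treats those two hypotheses as incidental bookkeeping (a density lower bound and regularity), whereas they are the structural input that makes the second step work; this is a mischaracterization of the mechanism rather than a fatal gap, since the overall route is the right one.
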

Here, $\mathcal{B}_E$ denotes the second order potential of the relaxed Euler system $\mathcal{A}_E$ which is constructed in \cite{G21}.

We want to give here only the idea of the proof:

Starting from the Jensen condition for the lifted measure, we obtain a sequence of weak solutions of the relaxed system generating $\Theta_{\sharp}\nu$. This follows from a result in the spirit of the Fonseca-Müller characterization from \cite{G22}. A truncation method from \cite{G21} can be used to obtain a generating sequence that is suitable for applying convex integration methods. The particular convex integration result we use is from \cite{DSW21}. In this way, the sequence of relaxed solutions generating $\Theta_{\sharp}\nu$ gives a sequence of true energy admissible weak solutions generating the measure $\nu$. This means, by convex integration we can transition from the linear back to the non-linear regime.

Now also the condition of space dimension $d\geq 2$ is justified, as convex integration methods do not work for $d=1$. Moreover, the condition $\gamma=1+\frac{2}{d}$ stems also from the usage of the specific compressible convex integration scheme we use. The main advantage of this case is that the generalized pressure $Q$ equals the energy density $\frac{1}{2}\frac{|m|^2}{\rho}+\frac{1}{\gamma-1}\rho^{\gamma}$ up to the constant factor $\frac{d}{2}$, which is crucial for the proofs. Note, however, that this particular choice of $\gamma$ corresponds physically to the case of monoatomic gases.

Let us briefly comment on the gap between the above necessary and sufficient conditions for generating compressible measure-valued solutions by weak solutions. When we ignore energy admissibility, the only true gap between the necessary and sufficient Jensen and barycenter conditions arises from the breakdown of the Calder\'on-Zygmund inequality in the case of $L^{\infty}$, which results in the different quasiconvex envelopes we used. It is unclear for now how to treat this very subtle point.

If we consider the conditions that lead to energy admissibility, we observe that the assumption $Q=\left\langle\nu,\frac{1}{d}\frac{|m|^2}{\rho}+\rho^{\gamma}\right\rangle$ is way too restrictive as a general dissipative measure-valued solution may violate this. For example take a solution consisting of two Dirac measures supported at weak solutions with different energies. Moreover, the continuity condition on the energy of $\nu$ apparently is also very restrictive. However, this is needed for technical reasons in the proof, which cannot be overcome in an obvious way.

We want to give also a very heuristic argument of why the incompressible and compressible situations are different:

It is easier to find true solutions generating the measure if the convex integration scheme is more flexible. This, in turn, corresponds to a large wave-cone, since we can find solutions of the relaxed system more easily then. A large wave-cone, however, implies that more functions $w$ satisfy $\mathcal{A}w=0$, and hence qualify as test functions in the definition of $\mathcal{A}$-quasiconvexity. Thus, the difference between a function and its corresponding $\mathcal{A}$-quasiconvex envelope is larger. So, the $\mathcal{A}$-quasiconvex envelope is relatively small.

Note that for the incompressible system the associated wave-cone is large compared to the wave-cone corresponding to the compressible system. Therefore the Jensen condition is easier to satisfy in the incompressible case, in fact, it is void.


\section{Statistical Notions of Solution}\label{sec: stats sols}

\subsection{Definitions and Comparisons Between Different Notions}\label{subsec: definitions}
In the following, we work on the two-dimensional torus $\T^2$ and some finite but arbitrary time interval $[0,T]$, and denote by $\CalU$ the set of all weak solutions of the incompressible Euler equations in $C([0,T];H_w)$ which satisfy the energy inequality
\begin{equation}\label{eq: energy ineq EE}
\frac{1}{2}\int_{\T^2} |u(x,t)|^2\dx \leq \frac{1}{2}\int_{\T^2} |u(x,0)|^2\dx
\end{equation}
for almost every $0 \leq t \leq T$. Here, $H_w$ is the set of $L^2(\T^2;\R^2)$-vector fields that are weakly divergence-free, equipped with the weak topology, so that $t\mapsto \int_{\T^2}u(x,t)\cdot v(x)\dx$ is continuous in $[0,T]$ for each $v\in L^2(\T^2;\R^2)$. In fact, this implies -- by weak lower semicontinuity of the norm -- that~\eqref{eq: energy ineq EE} is satisfied for {\em every} $t\in[0,T]$.

We introduce the time evaluation mappings $\lbrace \Pi_t\rbrace_{0 \leq t \leq T}$, where $\Pi_t\colon C([0,T];H_w) \to H_w, u \mapsto u(t)$.

For the considerations to follow, we note the general fact that on Polish spaces such as $H$, the Borel-$\sigma$-algebra generated by the weak and strong topologies coincide. We will therefore for purely measure-theoretic considerations and notations not distinguish between $H$ and $H_w$.

\begin{definition}[Trajectory statistical solutions]\label{def: stats sol trajectory}
Let $\overline{\mu}$ be a Borel probability measure on $H$ such that
\[\int_{H} \|u_0\|_{L^2}^2\dd\overline{\mu}(u_0) < \infty.\]
A {\em trajectory statistical solution} of the (2D incompressible) Euler equations or {\em $\CalU$-trajectory statistical solution} with initial distribution $\overline{\mu}$ is a Borel probability measure $\mu$ on $C([0,T];H_w)$ such that
\begin{enumerate}[i)]
\item there exists a Borel measurable subset $\CalV \subset \CalU$ such that $\mu(\CalV) = 1$;
\item the initial data is attained in the sense that the pushforward measure ${\Pi_0}_\sharp\mu$ is equal to $\overline{\mu}$;
\item $\mu$ satisfies an energy inequality in the sense that for every $0 \leq t \leq T$
\begin{equation}\label{eq: energy ineq traj stats sols}
\int_{C([0,T];H_w)} \|u(t)\|_{L^2}^2\dd\mu(u) \leq \int_{H}\|u_0\|_{L^2}^2\dd\overline{\mu}(u_0).
\end{equation}
\end{enumerate}
\end{definition}

For the definition of phase space statistical solutions, we need an appropriate class of test functionals $\CalT$. Motivated by \cite{FMRT01,FRT13,BMR16}, we let $\CalT$ be the set of functionals $\Phi\colon H \to \R$ such that for some $k\in \N$ and $L>2$, there exists $\phi \in C^1_c(\R^k)$ and $g_1,...,g_k \in H^{L}(\T^2;\R^2) \cap H \eqqcolon H^L$ such that
\[\Phi(u) = \phi((u,g_1)_{L^2},...,(u,g_k)_{L^2})\]
for all $u \in H$. In particular $\CalT \subset C_b(H_w)$.
Such functionals $\Phi$ are also Fr{\'e}chet-differentiable by the chain rule and the Fr{\'e}chet derivative $\Phi'(u)$ at some $u \in H$ can be identified with the function $\sum_{j=1}^k \partial_j \phi((u,g_1)_{L^2},...,(u,g_k)_{L^2})g_j$ in $H^{L}$.

As $L > 2$, we note that as we consider the 2D case, we have the embedding $H^L \hookrightarrow C^{1,\beta}(\T^2;\R^2)$ for some $0 < \beta < 1$ so that $\nabla \Phi'(u) \in C^{0,\beta}(\T^2;\R^2) \subset L^\infty(\T^2;\R^2)$.
For $u \in H$ and $g_1,...,g_k \in H^L$, we will also frequently use the notation $(u,g)_{L^2}^k \coloneqq ((u,g_1)_{L^2},...,(u,g_k)_{L^2}) \in \R^k$.\\
The functionals in $\CalT$ are called \textit{cylindrical test functionals} since to each Borel measure $\mu$ on $H$ one may uniquely associate the generalized moments $\lbrace \int_H \Phi(u)\dd\mu(u) : \Phi \in \CalT \rbrace.$ This may for instance be proved using the Stone-Weierstrass theorem \cite[Remark 2.7]{BMR16} or a monotone class argument.

In light of \eqref{eq: Foias-Liouville}, which was formally derived in the introduction, we make the following definition.

\begin{definition}[Statistical solutions in phase space]\label{def: stats sol phase space}
Let $\overline{\mu}$ be a Borel probability measure on $H$ such that
\[\int_{H} \|u_0\|_{L^2}^2\dd\overline{\mu}(u_0) < \infty.\]
A {\em phase space statistical solution} of the (2D incompressible) Euler equations with initial distribution $\overline{\mu}$ is a family $\lbrace \mu_t \rbrace_{0 \leq t \leq T}$ of Borel probability measures on $H$ such that $\mu_0 = \overline{\mu}$ and
\begin{enumerate}[i)]
\item for every $\varphi \in C_b(H_w)$
\begin{equation}\label{eq: phase space stats sol measurability}
t \mapsto \int_{H} \varphi(u)\dd\mu_t(u)
\end{equation}
is a Lebesgue measurable mapping on $[0,T]$;
\item the energy inequality 
\begin{equation}\label{eq: phase space energy inequ}
\int_{H} \|u\|_{L^2}^2\dd\mu_t(u) \leq \int_{H} \|u\|_{L^2}^2\dd\overline{\mu}(u)
\end{equation}
holds for a.e. $0 \leq t \leq T$;
\item for any $\Phi \in \CalT$ and every $0 \leq t' \leq t \leq T$, $\lbrace \mu_t \rbrace_{0 \leq t \leq T}$ satisfies the Foia\cs-Liouville equation
\begin{equation}\label{eq: foias-liouville def}
\int_H \Phi(u)\dd\mu_t(u) = \int_H \Phi(u)\dd\mu_{t'}(u) + \int_{t'}^t \int_H (\nabla\Phi'(u),u \otimes u)_{L^2}\dd\mu_s(u)\dd s.
\end{equation}  
\end{enumerate}
\end{definition}

\begin{remark}
We have chosen here $H_w$ as phase space for our notion of phase space statistical solutions and also defined trajectory statistical solutions as measures on $C([0,T];H_w)$. As in \cite{WW23}, it would likewise be possible and natural to instead use some negative order Sobolev space $H^{-s}(\T^2;\R^2), s > 2$. We chose $H_w$ here, as it is more natural for the comparison to moment-based statistical solutions that shall be introduced later.
\end{remark}

The following theorem shows in what way trajectory statistical solutions correspond to phase-space statistical solutions. It can be proved as in \cite[Chapter 3]{FRT13} (see also \cite{BMR16, WW23}).

\begin{theorem}\label{thm: traj stats sol -> phase space stats sol}
Let $\overline{\mu}$ be a Borel probability measure on $H$ and let $\mu$ be a trajectory statistical solution of the Euler equations with initial distribution $\overline{\mu}$. Then the family of pushforward measures $\lbrace\mu_t := {\Pi_t}_{\sharp}\mu\rbrace_{0 \leq t \leq T}$ is a statistical solution in phase space of the Euler equations with initial distribution $\overline{\mu}$.
\end{theorem}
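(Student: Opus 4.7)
\emph{Proof sketch.} The plan is to verify the three conditions of \Cref{def: stats sol phase space} for the family $\{\mu_t = {\Pi_t}_\sharp \mu\}_{0\le t \le T}$. The identity $\mu_0 = \overline{\mu}$ is exactly item ii) of \Cref{def: stats sol trajectory} applied at $t=0$. The energy inequality follows from the change-of-variables formula for pushforward measures combined with item iii) of \Cref{def: stats sol trajectory}: for every $t \in [0,T]$,
\begin{equation*}
\int_H \|u\|_{L^2}^2 \dd\mu_t(u) = \int_{C([0,T]; H_w)} \|v(t)\|_{L^2}^2 \dd\mu(v) \leq \int_H \|u_0\|_{L^2}^2 \dd\overline{\mu}(u_0).
\end{equation*}
For the measurability condition, fix $\varphi \in C_b(H_w)$. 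For each $v \in C([0,T]; H_w)$ the map $t \mapsto \varphi(v(t))$ is continuous and uniformly bounded by $\|\varphi\|_\infty$, so dominated convergence shows that $t \mapsto \int_H \varphi \dd\mu_t = \int \varphi(v(t)) \dd\mu(v)$ is in fact continuous, hence Lebesgue measurable.

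The heart of the matter is the Foia\cs-Liouville equation \eqref{eq: foias-liouville def}. I would first derive it along each individual trajectory $v \in \CalV \subset \CalU$. Since each $g_j \in H^L$ is divergence-free and $v$ is a weak solution of the incompressible Euler equations, testing the momentum equation against $\psi(t) g_j(x)$ for $\psi \in C^\infty_c((0,T))$ yields
\begin{equation*}
\frac{d}{dt}(v(t), g_j)_{L^2} = (v(t) \otimes v(t), \nabla g_j)_{L^2} \quad \text{in } \calD'((0,T)).
\end{equation*}
Because $v \in L^\infty(0, T; L^2)$ by \eqref{eq: energy ineq EE} and $\nabla g_j \in L^\infty$ (using the embedding $H^L \hookrightarrow C^{1,\beta}$ for $L > 2$), the right-hand side lies in $L^\infty(0,T)$, so $t \mapsto (v(t), g_j)_{L^2}$ is absolutely continuous. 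Applying the classical chain rule to $\Phi(v(t)) = \phi((v(t), g_1)_{L^2}, \ldots, (v(t), g_k)_{L^2})$ and integrating from $t'$ to $t$ gives
\begin{equation*}
\Phi(v(t)) - \Phi(v(t')) = \int_{t'}^t (v(s) \otimes v(s), \nabla \Phi'(v(s)))_{L^2} \dd s
\end{equation*}
for every $v \in \CalV$ and all $0 \le t' \le t \le T$.

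To finish, I integrate this identity against $\mu$, which is supported on $\CalV$ by item i) of \Cref{def: stats sol trajectory}, and invoke Fubini on the right-hand side. The interchange of order of integration is legitimate because $\nabla \Phi'(u)$ is uniformly bounded in $L^\infty(\T^2)$ (as $\phi \in C^1_c$ has bounded first derivatives and the $g_j$ are fixed), so the double integrand is dominated by $C \|v(s)\|_{L^2}^2$, and
\begin{equation*}
\int_{t'}^t \int_{C([0,T];H_w)} \|v(s)\|_{L^2}^2 \dd\mu(v) \dd s \leq (t - t') \int_H \|u_0\|_{L^2}^2 \dd\overline{\mu}(u_0) < \infty
\end{equation*}
by the trajectory-level energy inequality. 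After this rearrangement, identifying $\int F(v(s)) \dd\mu(v) = \int_H F \dd\mu_s$ yields \eqref{eq: foias-liouville def}. The main obstacle---more a subtlety than a genuine difficulty---is promoting the distributional identity for $(v(\cdot), g_j)_{L^2}$ to a pointwise absolutely continuous statement that can be plugged into the chain rule; this rests on the standard fact that for $v \in C([0,T]; H_w)\cap L^\infty(0,T; L^2)$ solving Euler weakly, the test fields $\psi(t)g(x)$ with divergence-free $g$ are dense enough to pin down $t \mapsto (v(t), g)_{L^2}$ as a Sobolev function of time, after which only Fubini and elementary bookkeeping remain.
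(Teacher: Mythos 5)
Your proposal is correct and follows essentially the same route as the argument the paper defers to (\cite[Chapter 3]{FRT13}, see also \cite{BMR16,WW23}): push the trajectory measure forward, verify the initial condition, energy inequality and measurability directly, and obtain the Foia\cs-Liouville equation by first deriving the identity $\frac{d}{dt}(v(t),g_j)_{L^2}=(v(t)\otimes v(t),\nabla g_j)_{L^2}$ trajectory-wise from the weak formulation (using that this map is Lipschitz, hence the chain rule applies to $\Phi(v(t))$), and then integrating over $\CalV$ with the Fubini interchange justified by the uniform bound $\|\nabla\Phi'(u)\|_{L^\infty}\|u\|_{L^2}^2$ and the energy inequality. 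I see no gaps.
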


At this point, let us also describe the Hopf statistical equation more rigorously. As in our considerations for deriving the Foia\cs-Liouville equation \eqref{eq: Foias-Liouville}, given a Borel probability measure on $H$, we assume the existence of solution operators $\lbrace S_t\rbrace_{0 \leq t \leq T}$ such that $t \mapsto S_tu_0 \in C([0,T];H_w)$ is a weak solution of the Euler equations with initial data $u_0 \in H$. Then, the evolution of $\overline{\mu}$ is naturally described by the pushforward measures $\mu_t \coloneqq {S_t}_\sharp\overline{\mu}$, $0 \leq t \leq T$, on $H_w$. Their characteristic functionals are given by
\[\F(t,v) = \int_{H} \exp(i(u,v)_{L^2})\dd\mu_t(u) = \int_H \exp(i(S_tu_0,v)_{L^2})\dd\overline{\mu}(u_0)\]
and the evolution of $\F$ can be described as follows
\begin{equation}\label{eq: Hopf equ derivation}
\begin{split}
\partial_t \F(t,v) &= \int_H i(\partial_tS_tu_0,v)_{L^2}\exp(i(S_tu_0,v)_{L^2})\dd\overline{\mu}(u_0)\\
&= \int_H i(S_tu_0 \otimes S_tu_0,\nabla v)_{L^2}\exp(i(S_tu_0,v)_{L^2})\dd\overline{\mu}(u_0)\\
&= \int_H i(u\otimes u, \nabla v)_{L^2}\exp(i(u,v)_{L^2})\dd\mu_t(u)
\end{split}
\end{equation}
for a.e. $0 \leq t \leq T$ and $v \in H^{L}$. The right-hand side is well-defined if $\mu_t$ has finite second moment with respect to $\|\cdot\|_{L^2}$. We denote the set of characteristic functionals of Borel probability measures on $H$ with this property by $\CalC$. We then introduce an operator $B$ on $\CalC$, such that $B(\chi)\colon H \to H^{-L}\coloneqq (H^L)'$ is given by
\[(B(\chi)w,v)_{H^{-L},H^{L}} = \int_{H} i(u \otimes u,\nabla v)_{L^2}\exp(i(u,w)_{L^2})\dd\mu(u)\]
for all $w \in H$ and $v \in H^{L}$ and $\chi \in \CalC$ being the characteristic functional of a measure $\mu$.\\
Then we may rewrite \eqref{eq: Hopf equ derivation} as
\begin{equation}\label{eq: Hopf functional equ}
\partial_t \F(t,v) = (B\F(t,v),v)_{H^{-L},H^{L}},
\end{equation}
which we call the \textit{Hopf statistical equation in functional form} of the (2D incompressible) Euler equations.
Alternatively, closer to Hopf's original formal considerations, one can study the \textit{coordinate form}. By this, we mean the description in terms of Fourier coefficients with respect to some fixed orthonormal basis of $H$, say $(\varphi_n)_{n\in\N} \subset C^\infty(\T^2) \cap H$ in our case. We denote the Fourier coefficients of some $v \in H$ by $\hat{v}_n \coloneqq (v,\varphi_n)_{L^2}$, $n \in \N$, and more specifically, denote the Fourier coefficients of $S_tu_0$ by $w_n(t,u_0)$, so that $S_tu_0 = \sum_{n=1}^\infty w_n(t,u_0)\varphi_n$ and 
\[\F(t,v) = \int_H \exp\left( i \sum_{n=1}^\infty w_n(t,u_0)\hat{v}_n\right)\dd\overline{\mu}(u_0).\]
Then, from \eqref{eq: Hopf equ derivation}, we obtain
\begin{equation}
\begin{split}
\partial_t \F(t,v) &= \int_H i\sum_{j,k,m=1}^\infty a_{jkm}w_j(t,u_0)w_k(t,u_0)\hat{v}_m\exp\left( i \sum_{n=1}^\infty w_n(t,u_0)\hat{v}_n\right)\dd\overline{\mu}(u_0)\\
&= \sum_{j,k,m=1}^\infty -i a_{jkm}\hat{v}_m \partial_{\hat{v}_j\hat{v}_k}\F(t,v),
\end{split}
\end{equation}
where $a_{jkm} \coloneqq (\varphi_j \otimes \varphi_k, \nabla \varphi_m)_{L^2}, j,k,m \in \N$. Then
\begin{equation}\label{Hopf equ coordinate form}
\partial_t \F(t,v) = \sum_{j,k,m=1}^\infty -ia_{jkm}\hat{v}_m \partial_{\hat{v}_j\hat{v}_k}\F(t,v)
\end{equation}
is called Hopf statistical equation in coordinate form of the (2D incompressible) Euler equations.

As is probably apparent here, it is quite inconvenient to give precise meaning to either the functional or the coordinate form of the Hopf statistical equation. Due to the similarity in the derivation of the Foia\cs-Liouville equation and statistical solutions in phase space, it might not come as a surprise that the latter actually lead to solutions of the Hopf statistical equation. We state the following result for our situation of the Euler equations, which is analogous to the considerations by Foia\cs\, in case of the Navier-Stokes equations \cite[Section 9, Theorem 1]{F73}, \cite[Sections V.1.3, V.2.4]{FMRT01}. See also \cite[Theorem 2]{LV78} by Ladyzhenskaya and Vershik for the coordinate version.

\begin{theorem}\label{thm: hopf equ existence}
Let $\overline{\mu}$ be a Borel probability measure on $H$ satisfying
\[\int_H \|u_0\|_{L^2}^2\dd\overline{\mu}(u_0) < \infty.\]
Suppose that $\lbrace \mu_t \rbrace_{0 \leq t \leq T}$ is a statistical solution of the Euler equations in phase space with initial distribution $\overline{\mu}$ and denote the characteristic functionals of $\overline{\mu}$ and $\lbrace \mu_t \rbrace_{0 \leq t \leq T}$ by $\overline{\F}$ and $\F = \lbrace \F(t,\cdot) \rbrace_{0\leq t \leq T}$ respectively.

Then $\F$ satisfies the functional version of the Hopf statistical equation in the sense that
\begin{equation}\label{eq: Hopf functional equ thm}
\F(t,v) = \overline{\F}(v) + \int_0^t (B\F(s,v),v)_{H^{-L},H^L}\dd s
\end{equation}
for all $v \in H^{L}$ and every $0 \leq t \leq T$.
\end{theorem}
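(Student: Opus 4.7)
The plan is to derive the Hopf functional equation \eqref{eq: Hopf functional equ thm} directly from the Foia\cs-Liouville equation \eqref{eq: foias-liouville def} by choosing a suitable family of admissible cylindrical test functionals that approximate the (complex) exponential $u\mapsto \exp(i(u,v)_{L^2})$, and passing to the limit. Fix $v\in H^{L}$. Since complex exponentials have no compact support, they are not themselves in $\CalT$, so I would split $\exp(is)=\cos s+i\sin s$ and use truncations: choose a smooth cutoff $\chi\in C_c^\infty(\R)$ with $\chi\equiv 1$ on $[-1,1]$, $\chi\equiv 0$ outside $[-2,2]$ and $|\chi'|\le C$, and set $\chi_R(s):=\chi(s/R)$, so that $|\chi_R|\le 1$, $|\chi_R'|\le C/R$, $\chi_R\to 1$ pointwise. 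Define $\phi_R^c(s):=\chi_R(s)\cos s$ and $\phi_R^s(s):=\chi_R(s)\sin s$ in $C_c^1(\R)$, and set $\Phi_R^c(u):=\phi_R^c((u,v)_{L^2})$, $\Phi_R^s(u):=\phi_R^s((u,v)_{L^2})$. Both lie in $\CalT$ (take $k=1$, $g_1=v\in H^{L}$), and $(\Phi_R^c)'(u)=(\phi_R^c)'((u,v)_{L^2})\,v$, so $\nabla (\Phi_R^c)'(u)=(\phi_R^c)'((u,v)_{L^2})\nabla v$, and analogously for $\Phi_R^s$.

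Applying Definition~\ref{def: stats sol phase space}~iii) with $t'=0$ to $\Phi_R^c$ and $\Phi_R^s$ yields
\begin{equation*}
\int_H \phi_R^{c/s}((u,v)_{L^2})\dd\mu_t(u)=\int_H\phi_R^{c/s}((u,v)_{L^2})\dd\overline{\mu}(u)+\int_0^t\!\!\int_H (\phi_R^{c/s})'((u,v)_{L^2})(\nabla v,u\otimes u)_{L^2}\dd\mu_s(u)\dd s.
\end{equation*}
I then combine the two equations as (cosine) $+\,i\cdot$(sine); pointwise one has $\phi_R^c(s)+i\phi_R^s(s)=\chi_R(s)e^{is}\to e^{is}$ and
\begin{equation*}
(\phi_R^c)'(s)+i(\phi_R^s)'(s)=\chi_R'(s)e^{is}+i\chi_R(s)e^{is}\longrightarrow i e^{is}\qquad (R\to\infty),
\end{equation*}
uniformly on compacta, with both quantities bounded by $1$ and $1+C$, respectively, uniformly in $R$.

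The next step is passing to the limit $R\to\infty$ by dominated convergence. For the left-hand side and the initial term, $|\chi_R e^{is}|\le 1$ gives the bound and convergence to $\F(t,v)$ and $\overline{\F}(v)$. For the time integral, the integrand is bounded in modulus by
\begin{equation*}
(1+C)\,|(\nabla v,u\otimes u)_{L^2}|\le (1+C)\,\|\nabla v\|_{L^\infty}\|u\|_{L^2}^2,
\end{equation*}
where $\nabla v\in L^\infty$ because $H^{L}\hookrightarrow C^{1,\beta}$ by $L>2$. The energy inequality~\eqref{eq: phase space energy inequ} gives
\begin{equation*}
\int_0^t\!\!\int_H \|u\|_{L^2}^2\dd\mu_s(u)\dd s\le t\int_H \|u_0\|_{L^2}^2\dd\overline{\mu}(u_0)<\infty,
\end{equation*}
providing the required $\mathrm{d} s\otimes\mathrm{d}\mu_s$-integrable majorant (joint measurability of the integrand is ensured by the measurability condition~\eqref{eq: phase space stats sol measurability}, which extends to integrands of the required form via a monotone class / truncation argument in $\|u\|_{L^2}^2$). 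Dominated convergence then yields
\begin{equation*}
\F(t,v)=\overline{\F}(v)+\int_0^t\!\!\int_H i\,e^{i(u,v)_{L^2}}(u\otimes u,\nabla v)_{L^2}\dd\mu_s(u)\dd s=\overline{\F}(v)+\int_0^t(B\F(s,v),v)_{H^{-L},H^{L}}\dd s,
\end{equation*}
using the definition of the operator $B$ in the last step.

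The main obstacle is really bookkeeping rather than a deep difficulty: ensuring $\Phi_R^{c/s}\in\CalT$ with the correct identification of $\Phi'$, and producing a single majorant valid uniformly in $R$ for the derivative term. The uniform bound on $(\phi_R^{c/s})'$ is delicate only in so far as one must pick $\chi_R$ so that $\chi_R'$ stays bounded (not merely $\chi_R\to 1$); the scaling $\chi_R(s)=\chi(s/R)$ in fact gives $|\chi_R'|\le C/R\to 0$, which is more than enough. Finally, the equality~\eqref{eq: Hopf functional equ thm} is obtained pointwise for every $t\in[0,T]$ and every $v\in H^{L}$, as required.
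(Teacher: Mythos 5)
Your argument is correct and is exactly the strategy the paper indicates (but omits the details of): extending the class $\CalT$ to the complex exponential $u\mapsto\exp(i(u,v)_{L^2})$ by a truncation of $\cos$ and $\sin$ and passing to the limit in the Foia\c{s}-Liouville equation via dominated convergence, with the majorant supplied by the energy inequality and the embedding $H^L\hookrightarrow C^{1,\beta}$. The bookkeeping (membership of $\Phi_R^{c/s}$ in $\CalT$, the uniform bound on $(\phi_R^{c/s})'$, and the identification of the limit with $(B\F(s,v),v)_{H^{-L},H^L}$) is all in order.
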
 

The main part in the proof of \Cref{thm: hopf equ existence}, which we are omitting here, is that for fixed $v \in H^{L}$, the class $\CalT$ of admissible functionals in the formulation of statistical solutions in phase-space can be extended to also include $u \mapsto \exp(i (u,v)_{L^2})$. This can be done by a standard truncation argument and a complexification of the involved spaces $H$ and $H^L$. 

For the third notion of statistical solution that we mentioned in the introduction, we first define \textit{correlation measures} as introduced in \cite{FLM17}, but only consider the $L^2$ setting. We introduce the sets of (time-dependent) Carath{\'e}odory functions
\begin{align*}
&\CalH_0^k(\T^2;\R^2) := L^1((\T^2)^k;C_0((\R^2)^k)),\\
 &\CalH_0^k([0,T],\T^2;\R^2) := L^1([0,T] \times (\T^2)^k;C_0((\R^2)^k)))
\end{align*} 
and their respective dual spaces
\begin{align*}
&\CalH_0^{k*}(\T^2;\R^2) := L^\infty_{\operatorname{w}}((\T^2)^k;\CalM((\R^2)^k))),\\
 &\CalH_0^{k*}([0,T],\T^2;\R^2) := L^\infty_{\operatorname{w}}([0,T] \times (\T^2)^k;\CalM((\R^2)^k)))
\end{align*}
for every $k \in \N$, where $\CalM$ denotes the set of all bounded Radon measures on the respective space. The subscript ${\operatorname{w}}$ denotes weak-* measurability as in \Cref{defmvs}.
\begin{definition}\label{def: corr meas}
A {\em correlation measure} is a collection $\corrM = (\nu^1,\nu^2,...)$ of maps $\nu^k \in \CalH_0^{k*}(\T^2;\R^2)$ satisfying the following properties:
\begin{enumerate}
\item Weak-* measurability: Every $\nu^k$ is a Young measure, i.e., for every $f \in C_0((\R^2)^k)$, the mapping
\[x\mapsto \langle \nu^k_x,f(\xi)\rangle := \int_{(\R^2)^k}f(\xi)\dd\nu_x^k(\xi)\]
on $(\T^2)^k$ is measurable. 
\item $L^2$-boundedness: $\int_{\T^2} \langle \nu_x^1,|\xi|^2\rangle \,dx < \infty$.
\item Symmetry: For every permutation $\pi$ of $\lbrace 1,...,k\rbrace$ and $f \in C_0((\R^2)^k)$, we have 
\begin{equation*}
\begin{split}
\langle \nu_{\pi(x)}^k, f(\pi(\xi))\rangle &:= \langle \nu_{x_{\pi(1)},...,x_{\pi(k)}}^k,f(\xi_{\pi(1)},...,\xi_{\pi(k)})\rangle\\
&= \langle \nu_x^k,f(\xi)\rangle
\end{split}
\end{equation*}
for a.e. $x \in (\T^2)^k$.
\item Consistency: Let $f\in C_0((\R^2)^k)$ and suppose that there exists some $g \in C_0((\R^2)^{k-1})$ such that $f(\xi_1,...,\xi_k) = g(\xi_1,...,\xi_{k-1})$ for all $\xi \in (\R^2)^k$. Then 
\[\langle \nu_{x_1,...,x_k}^k,f(\xi_1,...,\xi_k)\rangle = \langle \nu_{x_1,...,x_{k-1}}^{k-1},g(\xi_1,...,\xi_{k-1})\rangle \]
for a.e. $x \in (\T^2)^k$.
\item Diagonal continuity: $\lim_{r\to 0}\omega_r^2(\nu^2) = 0$, where we define the modulus of continuity
\[\omega_r^2(\nu^2) \coloneqq \int_{\T^2}\avint_{B_r(x)}\langle \nu_{x,y}^2,|\xi_1-\xi_2|^2\rangle\dd y\dx.\]  
\end{enumerate}
Each mapping $\nu^k$ is called a \emph{correlation marginal} and the set of all correlation measures will be denoted by $\CalL^2(\T^2;\R^2)$.
\end{definition}

The interpretation of a correlation measure $\corrM$ is to describe all multipoint-correlations of a flow, that is: Suppose $\euscr{F}$ is a set of velocity fields, $V_1,...,V_k \subset \R^2$ are measurable and $x=(x_1,...,x_k) \in (\T^2)^k$, then 
\[\nu^k_{x}(V_1\times ... \times V_k) = \text{Probability}\big(\lbrace u \in \euscr{F} : u(x_1) \in V_1, ...,u(x_k) \in V_k\rbrace\big).\]
This will be made more precise in \Cref{thm: corr measures main thm}.

For time-dependent correlation measures, we make the following adaptations as in \cite{FLMW20,FMW22}.

\begin{definition}
A {\em time-dependent correlation measure} is a collection $\corrM = (\nu^1,\nu^2,...)$ of maps $\nu^k \in \CalH_0^{k*}([0,T],\T^2;\R^2)$ satisfying the following properties:
\begin{enumerate}
\item $(\nu_t^1,\nu_t^2,...) \in \CalL^2(\T^2;\R^2)$ for a.e. $0 \leq t \leq T$.
\item $L^2$-boundedness: $\esssup_{0 \leq t \leq T}\int_{\T^2} \langle \nu_{t,x}^1,|\xi|^2\rangle \dx < \infty$.
\item Diagonal continuity: 
\begin{equation}\label{eq: diag cont}
\lim_{r\to 0}\int_0^T \omega_r^2(\nu_t^2)\dt = 0.
\end{equation} 
\end{enumerate}
The set of all such time-dependent correlation measures will be denoted by $\CalL^2([0,T],\T^2;\R^2)$.
\end{definition}

The simplest examples of correlation measures are the so-called \textit{atomic correlation measures}, where we consider a given function $u \in L^\infty(0,T;L^2(\T^2;\R^2))$ and define $\corrM = (\nu^1,\nu^2,...)$ as the hierarchy of product measures
\[\nu^k_{t,x_1,...,x_k} = \delta_{u(t,x_1)} \otimes ... \otimes \delta_{u(t,x_k)}.\]
In fact, in \cite{FLM17} it was argued that as a consequence of the diagonal continuity, given a Young measure $\lbrace \nu_x\rbrace_{x\in\T^2}$, the sequence of product measures $(\nu_x, \nu_x \otimes \nu_y,...)$ constitutes a correlation measure if and only if $\lbrace \nu_x \rbrace_{x\in\T^2}$ is a Dirac Young measure.

Analogously to \cite{FLM17} in the case of hyperbolic conservation laws, we now derive a notion of statistical solution of the Euler equations based on the evolution of correlation measures or, in other words, the evolution of moments. Evolving moments translates into evolving products in the state space $\R^2$. For this, we loosely use the notion of tensor product spaces $(\R^2)^{\otimes k}$ for every $k \in \N$. On $(\R^2)^{\otimes k}$, the product $:$ is defined by
\begin{equation}\label{eq: prod tensor prod space}
\eta : \zeta \coloneqq (\eta_1\cdot\zeta_1)...(\eta_k\cdot\zeta_k) \in \R
\end{equation}
for all $\eta = \eta_1\otimes ... \otimes \eta_k, \zeta = \zeta_1 \otimes ... \otimes \zeta_k \in (\R^2)^{\otimes k}$. This is well-defined due to the multilinearity of the right-hand side in \eqref{eq: prod tensor prod space}. Similarly, we define $:$ in $(\R^{2\times 2})^{\otimes k}$ and mixed tensor product spaces between $\R^2$ and $\R^{2 \times 2}$, where we then interpret the dot product $\cdot$ as the Frobenius inner product between matrices.\\
Now suppose $u\colon [0,T] \times \T^2 \to \R^2$ is a classical solution of the Euler equations with pressure $p\colon [0,T] \times \T^2 \to \R$ and initial datum $u(0) = u_0 \in C^1(\T^2;\R^2) \cap H$. We formally evolve the following tensor product using the product rule:
\begin{equation}\label{eq: evolution tensor prod}
\begin{split}
&\frac{d}{dt}(u(t,x_1) \otimes ... \otimes u(t,x_k))\\
=&\sum_{i=1}^k u(t,x_1) \otimes ... \otimes \partial_t u(t,x_i) \otimes ... \otimes u(t,x_k)\\
=&-\sum_{i=1}^k u(t,x_1) \otimes ... \otimes [\diverg(u(t,x_i)\otimes u(t,x_i)) + \nabla p(t,x_i)] \otimes ... \otimes u(t,x_k).
\end{split}
\end{equation} 

In order to interpret this equation in the sense of distributions, let $\theta \in C_c^1([0,T))$ and $g_1,...,g_k \in H^L$ and set 
\[g(x) = g_1(x_1) \otimes ... \otimes g_k(x_k), \quad x = (x_1,...,x_k) \in (\T^2)^k.\]
Then, from \eqref{eq: evolution tensor prod}, we further derive 
\begin{equation}\label{eq: evolution tensor prod weak}
\begin{split}
&-\int_0^T\int_{(\T^2)^k} \theta'(t)g(x) : (u(t,x_1) \otimes ... \otimes u(t,x_k))\\
&\qquad + \sum_{i=1}^k \theta(t)\nabla_{x_i}g(x) : (u(t,x_1) \otimes ... \otimes [u(t,x_i)\otimes u(t,x_i)] \otimes ... \otimes u(t,x_k))\dx\dt\\
&= \int_{(\T^2)^k} \theta(0)g(x) : (u_0(x_1)\otimes ... \otimes u_0(x_k))\dx,
\end{split}
\end{equation}

where $\nabla_{x_i}g(x) = g_1(x_1) \otimes ... \otimes \nabla_{x_i}g_i(x_i)\otimes ... \otimes g_k(x_k)$ for every $i = 1,...,k$.

Assuming that $u$ is in $L^\infty(0,T;H)$ and $u_0$ is in $H$, we now consider the atomic correlation measures $\corrM =  (\nu^1,\nu^2,...)$, $\overline{\corrM} = (\overline{\nu}^1,\overline{\nu}^2,...)$ given by
\[\nu_{t,x}^k = \delta_{u(t,x_1)} \otimes ... \otimes \delta_{u(t,x_k)},\quad \overline{\nu}_x^k = \delta_{u_0(x_1)} \otimes ... \otimes \delta_{u_0(x_k)}\]
for all $k\in\N$, $0 \leq t \leq T$ and $x = (x_1,...,x_k) \in (\T^2)^k$. Then we may rewrite \eqref{eq: evolution tensor prod weak} as 
\begin{equation}\label{eq: moment based Liouville}
\begin{split}
&-\int_0^T\int_{(\T^2)^k} \langle \nu^k_{t,x},\theta'(t)g(x) : (\xi_1 \otimes ... \otimes \xi_k)\rangle\\
&\qquad + \sum_{i=1}^k \langle \nu_{t,x}^k, \theta(t)\nabla_{x_i}g(x) : (\xi_1 \otimes ... \otimes [\xi_i\otimes \xi_i] \otimes ... \otimes \xi_k)\rangle\dx\dt\\
&= \int_{(\T^2)^k} \langle \overline{\nu}_{x}^k,\theta(0)g(x) : (\xi_1 \otimes ... \otimes \xi_k)\rangle\dx.
\end{split}
\end{equation}

In the following, we will usually write the factors which do not depend on $\xi$ in $\langle \nu_x,...\rangle$ outside of the brackets.

We now notice that the solution no longer explicitly appears in \eqref{eq: moment based Liouville} and we may use it as a defining equation. However, it comes at the cost of needing strong integrability conditions in order for \eqref{eq: moment based Liouville} to be well-defined in general. Following \cite{FMW22}, we therefore introduce two additional classes of functions, similar to $\CalH^k_0$, which we will also use later in \Cref{thm: corr meas comp thm}.

For every $k \in \N$, $\alpha \in \lbrace 0,1 \rbrace^k$ and $x \in (\T^2)^k$, we define $\overline{\alpha} \coloneqq (1-\alpha_1,...,1-\alpha_k)$, $|\alpha| \coloneqq \alpha_1 + ... + \alpha_k$ and $x_{\alpha} \in (\T^2)^{|\alpha|}$ consists of those entries $x_i$ of $x$ for which $\alpha_i = 1$. Moreover, for every $i = 1,...,k$, we define $\hat{x}^i \coloneqq (x_1,...,x_{i-1},x_{i+1},...,x_k) \in (\T^2)^{k-1}$.\\
Then, for every $k \in \N$, the space $\CalH^{k,2}([0,T],\T^2;\R^2)$ consists of all measurable functions $f\colon [0,T]\times(\T^2)^k \times (\R^2)^k \to \R$ such that $\xi \mapsto f(t,x,\xi)$ is continuous for a.e. $(t,x) \in [0,T] \times (\T^2)^k$ and
\[|f(t,x,\xi)| \leq \sum_{\alpha \in \lbrace 0,1 \rbrace^k} \varphi_{|\overline{a}|}(t,x_{\overline{\alpha}}) |\xi^\alpha|^2,\quad (t,x) \in [0,T]\times(\T^2)^k, \quad\xi \in (\R^2)^k\]
for non-negative $\varphi_i \in L^\infty(0,T;L^1((\T^2)^i)), i = 0,1,...,k$ (with the agreement that $L^1((\T^2)^0) = \R$).

Then $\CalH^{k,2}_1([0,T],\T^2;\R^2)$ is the subspace of functions $f \in \CalH^{k,2}([0,T],\T^2;\R^2)$ which satisfy a Lipschitz condition in the sense that for some $r > 0$ and non-negative $h \in \CalH^{k-1,2}([0,T],\T^2;\R^2), \tilde{h} \in \CalH^{k,2}([0,T],\T^2;\R^2), \psi\in L^\infty(0,T)$, we have
\begin{equation}
|f(t,x,\xi) - f(t,x,\zeta)| \leq \sum_{i=1}^k\psi(t)|\xi_i - \zeta_i|\max\lbrace |\xi_i|,|\zeta_i|\rbrace h(t,\hat{x}^i, \hat{\xi}^i) + \CalO(|x-y|)\tilde{h}(t,x,\xi)
\end{equation}
for all $x \in (\T^2)^k, y \in B_r(x), \xi,\zeta \in (\R^2)^k$. In the time-independent case, $\CalH^{k,2}(\T^2;\R^2)$ is defined analogously in an obvious manner.
Aside from these definitions, we will not further discuss these function spaces. We note that every integrand in \eqref{eq: moment based Liouville} lies in $\CalH^{k,2}([0,T],\T^2;\R^2)$ (recall that $H^L \hookrightarrow C^{1,\beta}(\T^2;\R^2)$) and we will say that a correlation measure $\corrM$ has integrable $\CalH^{\cdot,2}$ moments if for every $k \in \N$ and $f \in \CalH^{k,2}([0,T],\T^2;\R^2)$, $\langle\nu_{t,x}^k, f\rangle$ is integrable over $[0,T]\times(\T^2)^k$ (or simply $(\T^2)^k$ in the time-independent case).

\begin{definition}\label{def: stats sol moment based}
Let $\overline{\corrM} \in \CalL^2(\T^2;\R^2)$ be a correlation measure having integrable $\CalH^{\cdot,2}$ moments. A time-dependent correlation measure $\corrM \in \CalL^2([0,T],\T^2;\R^2)$ having integrable $\CalH^{\cdot,2}$ moments is called a \emph{moment based statistical solution} of the (2D incompressible) Euler equations with initial correlation measure $\overline{\corrM}$ if 
\begin{enumerate}[i)]
\item for all $\psi \in C^\infty(\T^2)$, we have
\begin{equation}\label{eq: corr meas div free}
\int_{(\T^2)^2}\langle \nu^2_{t,x_1,x_2},\xi_1 \otimes \xi_2\rangle : (\nabla \psi(x_1) \otimes \nabla\psi(x_2))\dx = 0
\end{equation}
for a.e. $0 \leq t \leq T$,
\item the energy inequality
\begin{equation}\label{eq: moment based energy inequ}
\int_{\T^2} \langle \nu_{t,x}^1,|\xi|^2\rangle \dx \leq \int_{\T^2} \langle \overline{\nu}_{x}^1, |\xi|^2\rangle \dx
\end{equation}
holds for a.e. $0 \leq t \leq T$,
\item for every $k \in \N$, equation \eqref{eq: moment based Liouville} is satisfied by $\corrM$ for all $\theta \in C_c^1([0,T))$ and $g(x) = g_1(x_1) \otimes ... \otimes g_k(x_k), x = (x_1,...,x_k) \in (\T^2)^k,$
where $g_1,...,g_k \in H^L$.
\end{enumerate}
\end{definition}

The integrand in \eqref{eq: corr meas div free} can be thought of as the square of the divergence. Property i) in the above definition is therefore a weak divergence-free condition as will be made precise in \Cref{lem: equiv div free}.\\
Given a function $u \in L^\infty(0,T;H)$ and $u_0 \in H$, it is not hard to see that $u$ is a weak solution of the Euler equations with initial datum $u_0$ if and only if the corresponding atomic correlation measure $(\delta_{u(t,x)}, \delta_{u(t,x)} \otimes \delta_{u(t,y)},...)$ is a moment-based statistical solution with initial correlation measure $(\delta_{u_0(x)}, \delta_{u_0(x)} \otimes \delta_{u_0(y)},...)$. This holds since for $k = 1$, \eqref{eq: moment based Liouville} is just the weak formulation of the Euler equations, \eqref{eq: moment based energy inequ} is equivalent to the energy inequality and we note that any weak solution $u \in L^\infty(0,T;H)$ of the Euler equations is already in $C([0,T];H_w)$ (or has a representative therein to be precise).\\
We now move on to show an equivalence between phase space statistical solutions as in \Cref{def: stats sol phase space} and the just introduced moment based statistical solutions in \Cref{def: stats sol moment based}. In the context of the Navier-Stokes equations, this was shown in \cite{FMW22}. 
The essential tool is the main theorem for correlation measures \cite[Theorem 2.7]{FLM17}, of which we will state the more general time-dependent version here \cite[Theorem 2.20]{FLMW20}.

\begin{theorem}\label{thm: corr measures main thm}
Let $\corrM \in \CalL^2([0,T],\T^2;\R^2)$ be a time-dependent correlation measure. Up to a Lebesgue null set, there exists a family $\lbrace \mu_t\rbrace_{0 \leq t \leq T}$ of Borel probability measures on $L^2(\T^2;\R^2)$ such that
\begin{enumerate}[i)]
\item the map
\begin{equation}\label{eq: meas main thm corr meas}
t \mapsto \int_{L^2}\int_{(\T^2)^k} h(x,u(x))\dx\dd\mu_t(u)
\end{equation}
is measurable for all $h \in \CalH_0^k(\T^2;\R^2)$,
\item the second moments w.r.t. $\|\cdot\|_{L^2}$ are uniformly bounded in time, i.e.
\begin{equation}\label{eq: finite sec mom}
\esssup_{0 \leq t \leq T} \int_{L^2} \|u\|_{L^2}^2\dd\mu_t(u) < \infty,
\end{equation}
\item the identity
\begin{equation}\label{eq: corr meas main thm identity}
\int_{(\T^2)^k} \langle \nu_t^k,h(x,\xi)\rangle\dx = \int_{L^2}\int_{(\T^2)^k}h(x,u(x))\dx\dd\mu_t(u)
\end{equation}
holds for a.e. $t \in [0,T]$, every $h \in \CalH^k_0(\T^2;\R^2)$, and all $k \in \N$.
\end{enumerate}
Conversely, for every family $\lbrace \mu_t \rbrace_{0 \leq t \leq T}$ satisfying i) and ii), there is a unique correlation measure $\corrM \in \CalL^2([0,T],\T^2;\R^2)$ satisfying iii).
\end{theorem}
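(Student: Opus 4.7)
The plan is to leverage the time-independent analogue of this result, namely the static correspondence between correlation measures in $\CalL^2(\T^2;\R^2)$ and Borel probability measures on $L^2(\T^2;\R^2)$ (see \cite{FLM17}), and apply it slicewise in $t$. The uniqueness in the static statement is crucial: it forces pointwise identities that transfer the regularity of the time-dependent data in both directions.

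For the forward direction, property 1 of a time-dependent correlation measure ensures that $(\nu_t^1,\nu_t^2,\ldots) \in \CalL^2(\T^2;\R^2)$ for a.e.\ $t$, so the static theorem produces a unique Borel probability measure $\mu_t$ on $L^2(\T^2;\R^2)$ satisfying the static analogue of iii), and the uniform $L^2$ bound on $\corrM$ yields \eqref{eq: finite sec mom}. Measurability i) follows from uniqueness: the static identity forces
\begin{equation*}
\int_{L^2}\int_{(\T^2)^k} h(x,u(x))\dx\dd\mu_t(u) = \int_{(\T^2)^k}\langle \nu_t^k, h(x,\xi)\rangle\dx
\end{equation*}
for a.e.\ $t$ and every $h \in \CalH_0^k(\T^2;\R^2)$, and the right-hand side is measurable in $t$ by the weak-$*$ measurability of $\nu^k$.

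For the converse, given $\{\mu_t\}$ satisfying i) and ii), I would apply the static converse to each $\mu_t$ (for a.e.\ $t$) to define correlation marginals $\nu_t^k$. Symmetry, consistency and pointwise $L^2$-boundedness of a static correlation measure are automatic, since $\nu_t^k$ is essentially the joint law of $(u(x_1),\ldots,u(x_k))$ under $\mu_t$. The weak-$*$ time measurability of $\nu^k$ is precisely assumption i) transported through the identity \eqref{eq: corr meas main thm identity}, and the uniform time bound on the second moment follows directly from ii).

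The hard part will be the integrated diagonal continuity \eqref{eq: diag cont} in the converse direction. Pointwise in $t$, identity iii) rewrites
\begin{equation*}
\omega_r^2(\nu_t^2) = \int_{\T^2}\avint_{B_r(x)}\int_{L^2}|u(x)-u(y)|^2\dd\mu_t(u)\dd y\dx,
\end{equation*}
which tends to $0$ as $r \to 0$ for a.e.\ $t$ by Fubini and the $L^2$-continuity of translations under $\int\dd\mu_t$. To promote this to $\int_0^T \omega_r^2(\nu_t^2)\dt \to 0$, I would invoke dominated convergence using the uniform majorant $\omega_r^2(\nu_t^2) \le 4\int_{L^2}\|u\|_{L^2}^2\dd\mu_t(u)$, which by ii) is essentially bounded on $[0,T]$. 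A secondary subtlety is that the slicewise construction only pins $\mu_t$ down up to a null set per test function; selecting a single exceptional null set valid for all $h$ simultaneously can be handled by exploiting countable separability of an appropriate subalgebra of $\CalH_0^k$ together with the uniqueness in the static statement.
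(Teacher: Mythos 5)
The paper does not actually prove this theorem: it is imported verbatim from the literature (the static version is \cite[Theorem 2.7]{FLM17} and the time-dependent version is \cite[Theorem 2.20]{FLMW20}), so there is no in-paper argument to compare against. Your proposal --- slicing in $t$ and invoking the static one-to-one correspondence for a.e.\ fixed time --- is the natural reduction, and the two analytic points you single out are handled correctly: the uniform majorant $\omega_r^2(\nu_t^2)\leq 4\int_{L^2}\|u\|_{L^2}^2\dd\mu_t(u)$ together with ii) does give the integrated diagonal continuity \eqref{eq: diag cont} by dominated convergence, and separability of $\CalH_0^k=L^1((\T^2)^k;C_0((\R^2)^k))$ plus the uniform total-variation bound on the marginals lets you pass from a countable dense family of test functions to all of $\CalH_0^k$ with a single exceptional null set.

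Two caveats. First, your argument is a reduction, not a proof: all of the genuine content sits in the static correspondence, whose forward direction is a nontrivial Kolmogorov-extension/tightness construction of a measure on the infinite-dimensional space $L^2$ from its finite-dimensional marginals, with the diagonal continuity being exactly what forces the limit measure to concentrate on $L^2$. Treating that as a black box is legitimate here (the paper does the same), but it should be stated as such. Second, in the converse direction the claim that ``the weak-$*$ time measurability of $\nu^k$ is precisely assumption i) transported through the identity'' is too quick: membership in $\CalH_0^{k*}([0,T],\T^2;\R^2)=L^\infty_{\operatorname{w}}([0,T]\times(\T^2)^k;\CalM((\R^2)^k))$ requires \emph{joint} weak-$*$ measurability in $(t,x)$, whereas i) only gives measurability in $t$ of quantities already integrated over $x$. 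One genuinely has to argue that, for each $f$ in a countable dense subset of $C_0((\R^2)^k)$, the bounded map $t\mapsto\langle\nu_{t,\cdot}^k,f\rangle\in L^\infty((\T^2)^k)$ is weak-$*$ measurable into the dual of the separable space $L^1((\T^2)^k)$ and therefore admits a jointly measurable representative, and then glue these representatives consistently into a single measure-valued map. This is standard duality bookkeeping of the kind underlying the identification $L^1(X;C_0(Y))'=L^\infty_{\operatorname{w}}(X;\CalM(Y))$, but it is a real step, not a restatement of i).
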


We may then formulate the equivalence between moment based and phase-space statistical solutions of the Euler equations as follows.

\begin{theorem}\label{thm: main thm moment based <-> phase space}
Let $\overline{\mu}, \lbrace \mu_t \rbrace_{0\leq t \leq T}$ be Borel probability measures on $H$ and suppose that there exists $R > 0$ such that
\begin{equation}\label{eq: main thm bounded supp}
\supp\overline{\mu}, \supp\mu_t \subset B_R^H
\end{equation}
for a.e. $t \in [0,T]$. Also, let $\lbrace \mu_t \rbrace_{0 \leq t \leq T}$ satisfy \eqref{eq: finite sec mom} and one of the equivalent measurability conditions \eqref{eq: meas main thm corr meas} or \eqref{eq: phase space stats sol measurability} (see \Cref{prop: equiv meas}).\\ 
If we consider the associated correlation measures $\overline{\corrM} \in \CalL^2(\T^2;\R^2)$ and $\corrM \in \CalL^2([0,T],\T^2;\R^2)$, then $\corrM$ is a moment based statistical solution of the Euler equations with initial correlation measure $\overline{\corrM}$ if and only if (after redefining on a Lebesgue null set) $\lbrace \mu_t \rbrace_{0\leq t \leq T}$ is a phase space statistical solution of the Euler equations with initial distribution $\overline{\mu}$.  
\end{theorem}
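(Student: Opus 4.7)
The plan is to translate each of the three defining conditions of \Cref{def: stats sol moment based}---the divergence-free property \eqref{eq: corr meas div free}, the energy inequality \eqref{eq: moment based energy inequ}, and the moment Liouville equation \eqref{eq: moment based Liouville}---into the corresponding condition of \Cref{def: stats sol phase space} via the identification \eqref{eq: corr meas main thm identity} from \Cref{thm: corr measures main thm}. The bounded support assumption \eqref{eq: main thm bounded supp} is essential throughout, for two reasons: it yields finite moments of $\mu_t$ of all orders with uniform bounds, and it allows us to insert test functions of quadratic (or polynomial) growth in $\xi$ into \eqref{eq: corr meas main thm identity}, which a priori holds only for $h \in \CalH_0^k$. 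Concretely, any such integrand can first be multiplied by a cutoff $\chi_M(\xi)$ that equals $1$ on $\{|\xi|\le M\}$, and then the limit $M\to\infty$ is taken on both sides; dominated convergence works on the right since $\|u\|_{L^2}\le R$ on $\supp\mu_t$, and on the left since the modified integrand lies in $\CalH^{k,2}$.

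First I would dispose of the easy items. Applying the extended \eqref{eq: corr meas main thm identity} with $k=1$ and $h(x,\xi) = |\xi|^2$ gives
\begin{equation*}
\int_{\T^2}\langle \nu_t^1,|\xi|^2\rangle\dx = \int_H \|u\|_{L^2}^2\dd\mu_t(u),
\end{equation*}
and analogously at $t=0$, so \eqref{eq: phase space energy inequ} and \eqref{eq: moment based energy inequ} are equivalent. For the divergence-free property, the same extension applied with $k=2$ and $h(x,\xi) = (\xi_1\cdot\nabla\psi(x_1))(\xi_2\cdot\nabla\psi(x_2))$ yields
\begin{equation*}
\int_{(\T^2)^2}\langle \nu_t^2,\xi_1\otimes\xi_2\rangle:(\nabla\psi(x_1)\otimes\nabla\psi(x_2))\dx = \int_H \left(\int_{\T^2} u\cdot\nabla\psi\dx\right)^{\!2}\!\dd\mu_t(u).
\end{equation*}
Since $\supp\mu_t\subset H$, each $u$ in the support is weakly divergence-free, so the right-hand side vanishes and \eqref{eq: corr meas div free} holds; conversely, if the left-hand side vanishes for every $\psi\in C^\infty(\T^2)$, then $\int_{\T^2}u\cdot\nabla\psi\dx = 0$ for $\mu_t$-a.e.\ $u$, and taking a countable dense family of such $\psi$ forces $\supp\mu_t\subset H$.

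The main step is the equivalence between the Foia\cs-Liouville equation \eqref{eq: foias-liouville def} and the moment equation \eqref{eq: moment based Liouville}. For $g_1,\ldots,g_k \in H^L$ and the monomial-type functional $\Phi(u) = \prod_{j=1}^k(u,g_j)_{L^2}$ (obtained from a $\phi \in C_c^1(\R^k)$ that agrees with $y_1\cdots y_k$ on the compact box $K = \prod_j [-R\|g_j\|_{L^2}, R\|g_j\|_{L^2}]$, which contains $((u,g_1)_{L^2},\ldots,(u,g_k)_{L^2})$ for every $u\in B_R^H$), one has $\Phi'(u) = \sum_i\left(\prod_{j\ne i}(u,g_j)_{L^2}\right)g_i$, and Fubini combined with the extended \eqref{eq: corr meas main thm identity} yields
\begin{align*}
\int_H \Phi(u)\dd\mu_t(u) &= \int_{(\T^2)^k}\langle \nu_t^k, g(x):(\xi_1\otimes\cdots\otimes\xi_k)\rangle\dx,\\
\int_H(\nabla\Phi'(u),u\otimes u)_{L^2}\dd\mu_s(u) &= \sum_{i=1}^k\int_{(\T^2)^k}\langle \nu_s^k, \nabla_{x_i}g(x):(\xi_1\otimes\cdots\otimes[\xi_i\otimes\xi_i]\otimes\cdots\otimes\xi_k)\rangle\dx,
\end{align*}
with $g(x) = g_1(x_1)\otimes\cdots\otimes g_k(x_k)$. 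Hence, for such $\Phi$, testing \eqref{eq: moment based Liouville} against $\theta\in C_c^1([0,T))$ is exactly the distributional form of $\frac{d}{dt}\int_H\Phi\dd\mu_t = \int_H(\nabla\Phi',u\otimes u)_{L^2}\dd\mu_t$, which is in turn equivalent to the Foia\cs-Liouville identity \eqref{eq: foias-liouville def} after redefining $\{\mu_t\}$ on a Lebesgue null set (exactly the leeway granted by the statement). To pass from monomials to all of $\CalT$, I would use a Stone-Weierstrass approximation of $\phi\in C_c^1(\R^k)$ by polynomials in the $C^1(K)$ topology; dominated convergence in both $\int_H\Phi\dd\mu_t$ and $\int_H(\nabla\Phi',u\otimes u)_{L^2}\dd\mu_s$ (using $\|u\|_{L^2}\le R$ on $\supp\mu_t$) then transfers the equivalence to every cylindrical $\Phi$.

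The main obstacle is the bookkeeping in this final approximation: the term $(\nabla\Phi'(u),u\otimes u)_{L^2}$ involves the derivative of $\phi$ paired with the quadratic object $u\otimes u$, so the Stone-Weierstrass step must control $\phi'$ as well as $\phi$, and the truncations in $\xi$ must be compatible with the growth of these integrands. The bounded-support hypothesis \eqref{eq: main thm bounded supp} collapses this difficulty by making all relevant integrands effectively compactly supported, while the shared measurability assumption (via \Cref{prop: equiv meas}) legitimises the repeated use of Fubini.
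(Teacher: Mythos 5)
Your proposal is correct and follows essentially the same route as the paper: both reduce everything to the identity \eqref{eq: corr meas main thm identity}, extended to quadratically growing integrands by $\xi$-cutoffs and the bounded-support hypothesis, handle the monomial cylindrical functionals $\Phi(u)=\prod_j(u,g_j)_{L^2}$ directly, pass to general $\phi\in C_c^1(\R^k)$ by polynomial approximation in $C^1$ on the compact box determined by $R$, and invoke the Foia\cs-type equivalence between the integrated and distributional-in-time forms of the Liouville equation (the paper's \Cref{lem: equiv foias-liouville}) to absorb the redefinition on a null set. The only difference is organisational -- you set up a symmetric two-way bridge at the monomial level rather than running the two implications separately -- which changes nothing of substance.
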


\begin{remark}
Phase space statistical solutions of the Euler equations which come from trajectory statistical solutions with initial distribution of bounded support in $H$ have uniformly bounded support in $H$ as a consequence of the energy inequality. Generally, this is unknown and may not need to be the case.
\end{remark}

It can be proved that a Borel probability measure on $L^2(\T^2;\R^2)$ being concentrated on the closed, hence measurable, subset $H$ of $L^2(\T^2;\R^2)$ is equivalent to the corresponding correlation measure satisfying \eqref{eq: corr meas div free}.

\begin{lemma}\label{lem: equiv div free}(\cite[Lemma 3.1]{LMP21})
Let $\mu$ be a Borel probability measure on $L^2(\T^2;\R^2)$, corresponding to a correlation measure $\corrM \in \CalL^2(\T^2;\R^2)$. Then $\mu$ is concentrated on $H$ if and only if \eqref{eq: corr meas div free} holds.
\end{lemma}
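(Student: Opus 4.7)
The plan is to show that both $\mu$ being concentrated on $H$ and condition \eqref{eq: corr meas div free} are reformulations of the vanishing, for every test function $\psi$, of the $\mu$-expectation of the squared weak divergence. First, $u \in L^2(\T^2;\R^2)$ lies in $H$ exactly when $\int_{\T^2} u\cdot\nabla\psi\dx = 0$ for every $\psi \in C^\infty(\T^2)$, and by separability of $C^\infty(\T^2)$ in the $H^1$-norm (together with continuity of $\psi \mapsto \nabla \psi$ into $L^2$) it suffices to check this for $\psi$ in a fixed countable dense subset. Taking countable intersections of $\mu$-full-measure sets thus shows $\mu(H)=1$ if and only if
\begin{equation*}
\int_{L^2}\left|\int_{\T^2} u(x)\cdot\nabla\psi(x)\dx\right|^2 \dd\mu(u) = 0 \quad\text{for every }\psi\in C^\infty(\T^2).
\end{equation*}

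The next ingredient is the algebraic identity, obtained via Fubini and the definition of $:$ in \eqref{eq: prod tensor prod space},
\begin{equation*}
\left|\int_{\T^2} u\cdot\nabla\psi\dx\right|^2 = \int_{(\T^2)^2} (u(x_1)\otimes u(x_2)) : (\nabla\psi(x_1)\otimes\nabla\psi(x_2))\dx.
\end{equation*}
Integrating against $\mu$ and formally applying the time-independent version of Theorem~\ref{thm: corr measures main thm} (\cite[Theorem~2.7]{FLM17}) with $k=2$ and $h(x,\xi) = (\xi_1\otimes\xi_2):(\nabla\psi(x_1)\otimes\nabla\psi(x_2))$ would turn the $\mu$-expectation into
\begin{equation*}
\int_{(\T^2)^2}\langle\nu^2_{x_1,x_2},\xi_1\otimes\xi_2\rangle : (\nabla\psi(x_1)\otimes\nabla\psi(x_2))\dx,
\end{equation*}
which is exactly the left-hand side of \eqref{eq: corr meas div free}, and the equivalence would follow at once.

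The main obstacle is that this $h$ has quadratic growth in $\xi$ and therefore does not belong to $\CalH_0^2(\T^2;\R^2)$, so Theorem~\ref{thm: corr measures main thm} does not apply literally. To circumvent this, I would use a truncation argument: pick cut-offs $\chi_n \in C_c(\R^2)$ with $0\leq\chi_n\leq 1$ and $\chi_n \equiv 1$ on $B_n(0)$, and apply the theorem to $h_n(x,\xi) := h(x,\xi)\chi_n(\xi_1)\chi_n(\xi_2)$, which does lie in $\CalH_0^2(\T^2;\R^2)$. On the $\mu$-side, $|h_n(x,u(x))|$ is dominated by $\|\nabla\psi\|_\infty^2 |u(x_1)||u(x_2)|$, whose $(\T^2)^2$-integral is controlled by $|\T^2|\,\|\nabla\psi\|_\infty^2\,\|u\|_{L^2}^2$, and $u\mapsto \|u\|_{L^2}^2$ is $\mu$-integrable because $\mu$ corresponds to $\corrM \in \CalL^2(\T^2;\R^2)$ (property~2 in \Cref{def: corr meas}). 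On the $\corrM$-side, the integrand is dominated by $\tfrac{1}{2}\|\nabla\psi\|_\infty^2\langle\nu^2_{x_1,x_2},|\xi_1|^2+|\xi_2|^2\rangle$, which by the consistency property equals $\tfrac{1}{2}\|\nabla\psi\|_\infty^2\bigl(\langle\nu^1_{x_1},|\xi|^2\rangle + \langle\nu^1_{x_2},|\xi|^2\rangle\bigr)$ and is $L^1((\T^2)^2)$ by the $L^2$-boundedness of $\nu^1$. Dominated convergence therefore lets me send $n\to\infty$ on both sides of the identity for $h_n$, which extends the correspondence to this quadratic-growth $h$.

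Chaining the three steps, $\mu(H) = 1$ is equivalent to the vanishing of the squared-divergence $\mu$-integral for every $\psi \in C^\infty(\T^2)$, which in turn is equivalent to \eqref{eq: corr meas div free}. This establishes the claim.
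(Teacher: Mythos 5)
Your argument is correct and is essentially the standard proof of this statement, which the paper itself does not reproduce but only cites from \cite[Lemma 3.1]{LMP21}: one identifies the left-hand side of \eqref{eq: corr meas div free} with $\int_{L^2}\left|\int_{\T^2}u\cdot\nabla\psi\dx\right|^2\dd\mu(u)$ by extending the duality identity \eqref{eq: corr meas main thm identity} to the quadratically growing observable via truncation and dominated convergence (justified by the finite second moments on both sides), and then observes that the vanishing of this non-negative quantity for all $\psi$ in a countable $H^1$-dense family is equivalent to $\mu(H)=1$. The only micro-gap is that the consistency property in \Cref{def: corr meas} is stated for $C_0$ test functions, so applying it to $|\xi_1|^2$ requires the same truncation extension you already set up; this is routine.
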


Also, the measurability conditions i) in Theorem~\ref{thm: corr measures main thm} and i) in \Cref{def: stats sol phase space} are equivalent. 

\begin{lemma}\label{prop: equiv meas}
Let $\lbrace \mu_t \rbrace_{0 \leq t \leq T}$ be a family of Borel probability measures on $H$. Then the following are equivalent:
\begin{enumerate}[i)]
\item The map
\begin{equation}\label{eq: equiv meas 1}
t \mapsto \int_H\int_{(\T^2)^k} h(x,u(x))\dx\dd\mu_t(u)
\end{equation}
is measurable for all $h \in \CalH^k_0(\T^2;\R^2)$ and all $k \in \N$.
\item The map
\begin{equation}\label{eq: equiv meas 2}
t \mapsto \int_H \varphi(u)\dd\mu_t(u)
\end{equation}
is measurable for all $\varphi \in C_b(H_w)$. 
\end{enumerate}
\end{lemma}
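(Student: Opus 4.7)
The plan is to prove both implications via the functional monotone class theorem, applied to the vector space $\CalK$ of bounded Borel measurable $\varphi\colon H \to \R$ for which $t \mapsto \int_H \varphi(u)\,\dd\mu_t(u)$ is Lebesgue measurable on $[0,T]$. By dominated convergence, $\CalK$ is closed under uniform and bounded monotone limits, so the theorem will apply once I exhibit, in each direction, a multiplicative subclass of $\CalK$ whose generated $\sigma$-algebra is $\mathrm{Borel}(H)$.

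For (ii) $\Rightarrow$ (i), I would fix $h \in \CalH_0^k(\T^2;\R^2)$ and set $\Psi_h(u) := \int_{(\T^2)^k} h(x,u(x))\,\dd x$. Two key points are that $|\Psi_h| \leq \|h\|_{L^1((\T^2)^k;C_0((\R^2)^k))}$ and that $\Psi_h$ is strongly continuous on $H$: for $u_n \to u$ in $L^2$, any subsequence admits a further a.e.\ convergent sub-subsequence along which dominated convergence applies with $L^1$-majorant $x \mapsto \|h(x,\cdot)\|_{\infty}$. Thus $\Psi_h$ is Borel measurable on $H$ (and on $H_w$, as the two Borel $\sigma$-algebras coincide). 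By (ii) the algebra $C_b(H_w)$ lies in $\CalK$ and contains the constants, and I would verify $\sigma(C_b(H_w)) = \mathrm{Borel}(H)$ as follows: for an orthonormal basis $\{e_n\}$ of $H$ and any $v \in H$, the functions $u \mapsto \arctan\bigl(\sum_{n \leq N}|(u,e_n)_{L^2}|^2\bigr)$ and $u \mapsto \arctan((u,v)_{L^2})$ lie in $C_b(H_w)$, so letting $N \to \infty$ and inverting $\arctan$ shows that both $\|u\|_{L^2}^2$ and $(u,v)_{L^2}$ are $\sigma(C_b(H_w))$-measurable; hence every strongly open ball, and then by separability of $H$ every strongly open set, belongs to $\sigma(C_b(H_w))$. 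The functional monotone class theorem then forces $\CalK$ to contain every bounded Borel function on $H$, in particular $\Psi_h$, which proves (i).

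For (i) $\Rightarrow$ (ii), I would consider the class $\CalM := \{1\} \cup \{\Psi_h : k \in \N,\, h \in \CalH_0^k(\T^2;\R^2)\}$, which lies in $\CalK$ by hypothesis and is multiplicative because Fubini gives $\Psi_{h_1}\Psi_{h_2} = \Psi_{h_1 \otimes h_2}$ with $h_1 \otimes h_2 \in \CalH_0^{k_1+k_2}(\T^2;\R^2)$. Choosing a smooth, non-negative, monotonically decreasing cutoff $\chi \in C_c^{\infty}([0,\infty))$ with $\chi(0)=1$ and setting $\chi_R(r) := \chi(r/R)$, one has $\chi_R(r) \nearrow 1$ as $R \to \infty$ for every fixed $r \geq 0$. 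For each $v \in L^2(\T^2;\R^2)$ the integrands $h_R(x,\xi) := |\xi|^2 \chi_R(|\xi|)$ and $h_R^v(x,\xi) := (v(x) \cdot \xi)\chi_R(|\xi|)$ lie in $\CalH_0^1(\T^2;\R^2)$, and monotone (respectively dominated) convergence yield $\Psi_{h_R}(u) \nearrow \|u\|_{L^2}^2$ and $\Psi_{h_R^v}(u) \to (u,v)_{L^2}$ for every $u \in H$. Hence $u \mapsto \|u - v\|_{L^2}^2$ is $\sigma(\CalM)$-measurable for every $v \in H$, and separability of $H$ gives $\sigma(\CalM) = \mathrm{Borel}(H)$. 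A second application of the monotone class theorem places every bounded Borel function -- in particular every $\varphi \in C_b(H_w)$ -- in $\CalK$.

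The main obstacle is purely measure-theoretic bookkeeping: in each direction I must verify that the generating algebra (abstract $C_b(H_w)$ versus the concrete $\CalM$) exhausts $\mathrm{Borel}(H)$. In both cases this reduces to recovering the squared norm $\|u\|_{L^2}^2$ and the linear functionals $(u,v)_{L^2}$ as measurable pointwise limits of the available test functions, after which separability of $H$ closes the argument.
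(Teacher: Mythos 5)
Your proof is correct, and one of the two directions follows a genuinely different route from the paper. For (i) $\Rightarrow$ (ii) you do essentially what the paper does: a monotone class argument with the multiplicative family $\{\Psi_h\}$, showing that $\sigma(\{\Psi_h\})$ contains the Borel $\sigma$-algebra of $H$ by recovering squared $L^2$-distances as pointwise limits of cut-off moments (the paper cuts off $|\xi-u_0(x)|^2$ directly, you split into $\|u\|_{L^2}^2$ and $(u,v)_{L^2}$; this is immaterial, and your verification that $h_1\otimes h_2\in\CalH_0^{k_1+k_2}$ makes explicit a multiplicativity check the paper leaves implicit). For (ii) $\Rightarrow$ (i), however, the paper argues by mollification: it approximates $\Psi_h$ by $u\mapsto\Psi_h(\CalJ_\varepsilon u)$, which is bounded and weakly continuous (hence covered by hypothesis (ii)), and converges pointwise to $\Psi_h$ as $\varepsilon\to 0$ by the strong $L^2$-continuity of $\Psi_h$, so that dominated convergence gives measurability of $t\mapsto\int_H\Psi_h\,\dd\mu_t$. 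You instead run the monotone class theorem a second time with $M=C_b(H_w)$, which requires the additional -- and correctly executed -- observation that $\sigma(C_b(H_w))$ is the full Borel $\sigma$-algebra of $H$ (your $\arctan$ device applied to the partial sums of $\sum_n|(u,e_n)_{L^2}|^2$ and to $(u,v)_{L^2}$), after which only boundedness and Borel measurability of $\Psi_h$ are needed. Both arguments are sound: the paper's mollification is more constructive and sidesteps the identification of $\sigma(C_b(H_w))$, while your version is more symmetric between the two implications and isolates the fact that in this direction nothing about $\Psi_h$ is used beyond its being a bounded Borel function on $H$.
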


Very similarly to \cite{FMW22}, we employ a monotone class argument (see for instance \cite[Theorem A.1]{J97}).
 
\begin{theorem}\label{thm: monotone class}
Let $W$ be a vector space of bounded real functions on a set $\Omega$ that contains the constant functions and is closed under bounded monotone convergence. If $M$ is a subset of $W$ which is closed under multiplication, then $W$ contains every bounded function that is measurable with respect to the $\sigma$-algebra on $\Omega$ generated by $M$.
\end{theorem}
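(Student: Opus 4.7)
The plan is to reduce to Dynkin's $\pi$-$\lambda$ theorem applied to the class $\calD := \{A\subset\Omega : \I_A\in W\}$. Using that $W$ is a vector space containing constants and closed under bounded monotone convergence, I would check directly that $\calD$ is a $\lambda$-system: $\Omega\in\calD$, the identity $\I_{B\setminus A}=\I_B-\I_A$ handles proper differences, and bounded monotone convergence handles countable increasing unions. For the $\pi$-system I would take $\mathcal{P}$ to consist of finite intersections of sublevel sets $\{f\le a\}$ with $f\in M$, $a\in\R$, for which $\sigma(\mathcal{P})=\sigma(M)$ is immediate. Once $\mathcal{P}\subset\calD$ is established, Dynkin's theorem yields $\sigma(M)\subset\calD$, so $W$ contains every indicator of a $\sigma(M)$-measurable set; the vector space structure extends this to simple functions, and a final bounded monotone approximation (splitting into positive and negative parts) extends it to arbitrary bounded $\sigma(M)$-measurable functions.

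The substantive work is the inclusion $\mathcal{P}\subset\calD$, accomplished through a polynomial-to-indicator bootstrap in which the multiplicative closure of $M$ plays its essential role. Since $W$ is a vector space containing $1$ and $M$ is multiplicatively closed, the real algebra $\CalA$ generated by $M\cup\{1\}$ lies inside $W$. For fixed $f\in M$ with $b:=\|f\|_\infty<\infty$, the Weierstrass theorem gives polynomial approximants $p_k\to\phi$ uniformly on $[-b,b]$ for any continuous $\phi$; to upgrade this to membership in $W$, I would apply a standard downshift-and-max device: the shifted approximants $r_k := p_k(f)-\|p_k-\phi\|_\infty$ satisfy $r_k \le \phi(f)$ pointwise and $r_k\to\phi(f)$ uniformly, and their finite maxima form a bounded sequence increasing to $\phi(f)$. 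For this construction to remain inside $W$, one needs pairwise maxima of elements of $\CalA$ to lie in $W$, which via the identity $\max(g,h)=\tfrac12(g+h+|g-h|)$ reduces to the assertion $|g|\in W$ for $g\in\CalA$. That last fact comes from writing $|g|=\sqrt{g^2}$ and approximating $\sqrt{\cdot}$ from below on $[0,\|g\|_\infty^2]$ by the classical Dini sequence of polynomials $P_j$ with $P_j(0)=0$, defined iteratively by $P_{j+1}(y)=P_j(y)+\tfrac12(y-P_j(y)^2)$; then $P_j(g^2)\in\CalA$ and $P_j(g^2)\uparrow |g|$ boundedly, so $|g|\in W$. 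Applying the bootstrap with $\phi_n(y):=1\wedge n(a-y)_+$ yields $\phi_n(f)\in W$ and $\phi_n(f)\uparrow\I_{\{f<a\}}$, giving $\I_{\{f<a\}}\in W$; the variant for $\{f\le a\}$ is symmetric, and finite intersections $\bigcap_{i=1}^n\{f_i\le a_i\}$ are treated by taking products $\prod_i \phi_{n_i}(f_i)\in\CalA$ and again passing to bounded monotone limits.

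The principal technical obstacle is that $W$ is assumed closed only under bounded \emph{monotone} limits, not uniform or arbitrary pointwise bounded limits, so every approximation step must be rearranged into a monotone sequence whose members are already known to lie in $W$. The multiplicative closure of $M$ is precisely the hypothesis that makes this possible: it supplies the polynomial algebra $\CalA\subset W$, within which Weierstrass/Dini-type approximations live, thereby letting the monotone-convergence closure propagate all the way up to indicator functions and hence to all bounded $\sigma(M)$-measurable functions.
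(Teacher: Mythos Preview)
The paper does not prove this result; it is quoted as \cite[Theorem A.1]{J97} and invoked as a tool in the proof of Lemma~\ref{prop: equiv meas}. Your overall architecture---form the algebra $\CalA$ generated by $M\cup\{1\}$ inside $W$, push continuous functions of elements of $M$ into $W$ via polynomial approximation, extract indicators of sublevel sets, and then invoke Dynkin's $\pi$--$\lambda$ theorem---is the standard route and is correct in outline.

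Two steps as written do not go through, however. In the downshift-and-max device you need the finite maxima $R_n=\max(r_1,\ldots,r_n)$ to lie in $W$, but you only establish that \emph{pairwise} maxima of elements of $\CalA$ are in $W$; the iteration $R_3=\max(R_2,r_3)$ breaks down because $R_2\notin\CalA$ in general and $W$ is not an algebra, so the Dini construction for $|R_2-r_3|$ (which requires polynomials in $R_2-r_3$ to lie in $W$) is unavailable. Similarly, the claim $\prod_i\phi_{n_i}(f_i)\in\CalA$ is false: the $\phi_{n_i}$ are continuous but not polynomial, so each factor is at best in $W$, and $W$ is not closed under products. Both gaps close cleanly once you insert the preliminary observation that $W$ is automatically closed under \emph{uniform} limits: if $\|f_n-f\|_\infty\le 2^{-n}$ then $g_n:=f_n-2^{2-n}\in W$ satisfies $g_{n+1}-g_n\ge 2^{-n-1}>0$ and $g_n\uparrow f$ boundedly, whence $f\in W$. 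With uniform closure in hand, multivariate Weierstrass gives $\Phi(f_1,\ldots,f_m)\in W$ directly for every continuous $\Phi$ and $f_1,\ldots,f_m\in M$ (since $P(f_1,\ldots,f_m)\in\CalA\subset W$ for every polynomial $P$), and the rest of your argument proceeds without the max device.
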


We are also going to use the following result from \cite[Lemma 2.10]{FLM17}. 

\begin{lemma}\label{lem: caratheodory funct cont}
For every $h \in \CalH^k_0(\T^2;\R^2)$ and $k \in \N$, the function
\begin{equation}\label{eq: integral caratheodory funct}
u \mapsto \int_{(\T^2)^k} h(x,u(x))\dx
\end{equation}
is continuous on $H$.
\end{lemma}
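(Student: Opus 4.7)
My plan is to prove the sequential continuity of the functional $F(u) := \int_{(\T^2)^k} h(x,u(x))\dx$ on $H$ (equipped with its natural strong $L^2$-topology, since $H$ is otherwise treated as a Hilbert subspace of $L^2(\T^2;\R^2)$). Here the notation $u(x)$ for $x=(x_1,\dots,x_k)\in(\T^2)^k$ is read as $(u(x_1),\dots,u(x_k))\in(\R^2)^k$, and by the definition of $\CalH^k_0(\T^2;\R^2)=L^1((\T^2)^k;C_0((\R^2)^k))$, the map $\xi\mapsto h(x,\xi)$ is continuous and vanishes at infinity for a.e.\ $x$, while $x\mapsto \|h(x,\cdot)\|_{C_0((\R^2)^k)}$ lies in $L^1((\T^2)^k)$.

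The heart of the argument is to combine dominated convergence on $(\T^2)^k$ with the Carath\'eodory property of $h$. The main obstacle is that strong $L^2$-convergence $u_n\to u$ in $\T^2$ does not yield pointwise convergence directly; one has to pass to a subsequence. To keep the continuity statement along the full sequence, I will invoke the standard subsequence--subsequence principle: I fix an arbitrary sequence $u_n\to u$ in $H$ and aim to show that every subsequence of $(F(u_n))_n$ admits a further subsequence converging to $F(u)$; the desired convergence $F(u_n)\to F(u)$ then follows.

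So, given a subsequence (not relabelled), since $u_n\to u$ in $L^2(\T^2;\R^2)$, I extract a further subsequence, still denoted $(u_n)$, such that $u_n(y)\to u(y)$ for a.e.\ $y\in\T^2$. By Fubini's theorem, the set of points $x=(x_1,\dots,x_k)\in(\T^2)^k$ for which some coordinate $x_i$ lies in the exceptional null set of $\T^2$ is itself a null set in $(\T^2)^k$, so $u_n(x_i)\to u(x_i)$ for all $i$ at a.e.\ $x\in(\T^2)^k$. Combined with the Carath\'eodory continuity of $h(x,\cdot)$, this yields the pointwise convergence
\begin{equation*}
h(x,u_n(x))\to h(x,u(x)) \quad \text{for a.e. } x\in(\T^2)^k.
\end{equation*}

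Finally, the uniform envelope $|h(x,u_n(x))|\le \|h(x,\cdot)\|_{C_0((\R^2)^k)}$ is an $L^1$-majorant on $(\T^2)^k$ independent of $n$, so Lebesgue's dominated convergence theorem gives $F(u_n)\to F(u)$ along this further subsequence. Since the initial subsequence was arbitrary, the subsequence--subsequence principle yields $F(u_n)\to F(u)$ along the original sequence, establishing the continuity of $F$ on $H$. Note that no use is made of the solenoidal constraint, so the lemma actually holds on all of $L^2(\T^2;\R^2)$; the restriction to $H$ is only what is needed in the sequel.
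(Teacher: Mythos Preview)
Your proof is correct: the subsequence--subsequence principle together with a.e.\ pointwise convergence along a further subsequence and the $L^1$-majorant $x\mapsto\|h(x,\cdot)\|_{C_0((\R^2)^k)}$ is exactly the standard route to continuity of such Carath\'eodory integral functionals. The paper itself does not supply a proof of this lemma but merely cites \cite[Lemma~2.10]{FLM17}; your argument is presumably what that reference contains (or a minor variant thereof), and in any case it stands on its own.
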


In the following proof of \Cref{prop: equiv meas}, we let $(\CalJ_\varepsilon)_{\varepsilon > 0}$ be the family of smoothing operators associated to a family of standard mollifiers.

\begin{proof}[Proof of \Cref{prop: equiv meas}]
We first assume ii). Then, let $k \in \N, h \in \CalH^k_0(\T^2;\R^2)$ and define
\[\varphi(u) \coloneqq \int_{(\T^2)^k} h(x,u(x))\dx, \quad u \in H\]
as well as 
\[\varphi_{\varepsilon}(u) \coloneqq \varphi(\CalJ_\varepsilon u) = \int_{(\T^2)^k} h(x,\CalJ_\varepsilon u(x))\dx, \quad u \in H,\]
for every $\varepsilon > 0$. We note that whenever $u^n \rightharpoonup u\,(n \to \infty)$ in $H$, then $\CalJ_\varepsilon u^n \to \CalJ_\varepsilon u\,(n \to \infty)$ pointwise on $\T^2$. Since $h$ is continuous in the second argument, the dominated convergence theorem yields $\varphi_{\varepsilon} \in C_b(H_w)$. In particular,
\[t \mapsto \int_H \varphi_{\varepsilon}(u)\dd\mu_t(u)\]
is measurable.\\
Due to the continuity of $\varphi$ with respect to $\|\cdot\|_{L^2}$ by \Cref{lem: caratheodory funct cont} and the convergence $\CalJ_{\varepsilon}u \to u\,(\varepsilon \to 0)$ in $H$ for any $u \in H$, we obtain pointwise convergence $\varphi_{\varepsilon} \to \varphi\,(\varepsilon \to 0)$ on $H$. Then the dominated convergence theorem yields measurability of $t \mapsto \int_H\varphi(u)\dd\mu_t(u)$ as the a.e.\ pointwise limit of measurable functions.

Now we assume i) and show the converse. To apply \Cref{thm: monotone class}, we consider $\Omega = H$, $W$ to be the set of bounded and Borel measurable functions $\Psi\colon H \to \R$ such that $t \mapsto \int_H \Psi(u)\dd\mu_t(u)$ is measurable, and $M$ the space of all functions as in \eqref{eq: integral caratheodory funct} for $h \in \CalH^k_0(\T^2;\R^2)$ and all $k \in \N$. Our assumption i) precisely guarantees that $M \subset W$.

It can be shown that with these definitions, the assumptions of \Cref{thm: monotone class} are satisfied. We denote the $\sigma$-algebra generated by $M$ with $\CalA$.

For any fixed $u_0 \in H$, by considering smooth cut-off functions $\chi \in C_c^\infty(\R^2)$, one can use the approximations $(x,\xi) \mapsto |\xi - u_0(x)|\chi(\xi) \in \CalH^1_0(\T^2;\R^2)$ to show that $u \mapsto \int_{\T^2}|u(x)-u_0(x)|^2\dx = \|u-u_0\|_{L^2}^2$ is $\CalA$-measurable and consequently, all open balls in $H$ are $\CalA$-measurable. In particular, $\CalA$ contains the Borel-$\sigma$-algebra on $H$. The latter is also generated by the $H_w$ topology as remarked before so that any $\varphi \in C_b(H_w)$ belongs to $W$ as desired.
\end{proof}

Before finally proving \Cref{thm: main thm moment based <-> phase space}, we give an equivalent formulation of the Foia\cs-Liouville equation, which can be compared more easily to \eqref{eq: moment based Liouville}. The alternative formulation \eqref{eq: Foias-Liouville equiv form} to \eqref{eq: foias-liouville def} in \Cref{def: stats sol phase space} almost everywhere can be proved as in \cite[§3 Lemma 5]{F72}. Altering the family of measures on a Lebesgue null set so that \eqref{eq: foias-liouville def} holds everywhere can be done as in \cite[§3 Theorem 2]{F72} under the assumption of uniformly bounded supports, which we also make in \Cref{thm: main thm moment based <-> phase space}.

\begin{lemma}\label{lem: equiv foias-liouville}
Let $\overline{\mu}$ and $\lbrace \mu_t \rbrace_{0\leq t \leq T}$ be Borel probability measures on $H$ satisfying i) and ii) in \Cref{def: stats sol phase space} and suppose that there exists $R > 0$ such that for a.e. $0 \leq t \leq T$
\[\supp\overline{\mu},\supp\mu_t \subset B_R^H.\]
Then iii) in \Cref{def: stats sol phase space} holds true if (after altering $\lbrace \mu_t\rbrace_{0\leq t \leq T}$ on a Lebesgue null set) and only if
\begin{equation}\label{eq: Foias-Liouville equiv form}
-\int_0^T\int_H \theta'(t)\Phi(u) + (u\otimes u,\theta(t)\nabla\Phi(u))_{L^2}\dd\mu_t(u)\dt = \int_H\theta(0)\Phi(u_0)\dd\overline{\mu}(u_0)
\end{equation}
for all $\Phi \in \CalT$ and $\theta \in C^1_c([0,T))$. 
\end{lemma}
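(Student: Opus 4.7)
The plan is to prove the two implications separately. Throughout, write
\[\ell_\Phi(t) := \int_H \Phi(u)\dd\mu_t(u), \qquad m_\Phi(t) := \int_H (u\otimes u, \nabla\Phi'(u))_{L^2}\dd\mu_t(u);\]
both are bounded measurable functions on $[0,T]$ by the uniform support assumption $\supp\mu_t \subset B_R^H$ and the embedding $H^L \hookrightarrow C^{1,\beta}$, which guarantees $\nabla\Phi'(u)\in L^\infty(\T^2;\R^{2\times 2})$ uniformly on $B_R^H$ for $\Phi \in \CalT$.

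For the forward direction, I set $t' = 0$ in iii) of \Cref{def: stats sol phase space} to obtain
\[\ell_\Phi(t) = \int_H \Phi(u_0)\dd\overline{\mu}(u_0) + \int_0^t m_\Phi(s)\dd s\]
for every $t \in [0,T]$. Multiplying by $\theta'(t)$ for $\theta \in C^1_c([0,T))$, integrating over $[0,T]$, exchanging the order of integration in the resulting double integral by Fubini, and using $\int_s^T \theta'(t)\dt = -\theta(s)$ and $\int_0^T \theta'(t)\dt = -\theta(0)$, one arrives at \eqref{eq: Foias-Liouville equiv form} after rearrangement.

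For the converse, I first test \eqref{eq: Foias-Liouville equiv form} against $\theta \in C^1_c((0,T))$ (so $\theta(0) = 0$), which amounts exactly to saying that the distributional derivative of $\ell_\Phi$ on $(0,T)$ equals $m_\Phi$. Since both functions are bounded, $\ell_\Phi$ admits a Lipschitz representative $\tilde{\ell}_\Phi$ on $[0,T]$. Plugging $\tilde{\ell}_\Phi$ back into \eqref{eq: Foias-Liouville equiv form} with a general $\theta \in C^1_c([0,T))$ and integrating by parts classically cancels the $m_\Phi$ contributions and forces $\tilde{\ell}_\Phi(0) = \int_H \Phi(u_0)\dd\overline{\mu}(u_0)$. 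The identity $\tilde{\ell}_\Phi(t) - \tilde{\ell}_\Phi(t') = \int_{t'}^t m_\Phi(s)\dd s$ then delivers iii) for every $0 \leq t' \leq t \leq T$, but so far only for the continuous representative $\tilde{\ell}_\Phi$ rather than $\ell_\Phi$ itself.

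The main obstacle is to upgrade this to an identity for $\ell_\Phi$ by modifying $\{\mu_t\}$ on a common Lebesgue null set simultaneously for all $\Phi \in \CalT$, which is precisely why the statement of the lemma permits such a redefinition. Here the uniform bound $\supp\mu_t \subset B_R^H$ is indispensable: $B_R^H$ is weak-compact and metrizable, and by the Stone--Weierstrass argument recalled above \Cref{def: stats sol phase space}, a countable subfamily $\{\Phi_k\} \subset \CalT$ (generated by a countable dense subset of $H^L$) already determines probability measures supported in $B_R^H$. I choose a single null set $N \subset [0,T]$ off which $\ell_{\Phi_k} = \tilde{\ell}_{\Phi_k}$ for every $k$, and for each $t \in N$ define $\tilde{\mu}_t$ as a weak-$*$ subsequential limit of $\mu_{t_n}$ along times $t_n \to t$ with $t_n \notin N$. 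Continuity of each $\tilde{\ell}_{\Phi_k}$ pins down $\int \Phi_k\dd\tilde{\mu}_t = \tilde{\ell}_{\Phi_k}(t)$, which identifies $\tilde{\mu}_t$ uniquely by the density just mentioned. The modified family then satisfies iii) for every $0 \leq t' \leq t \leq T$, following exactly the procedure carried out by Foia\cs\ in \cite[\S 3 Theorem 2]{F72}, to which the statement of the lemma defers.
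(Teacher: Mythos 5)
Your proof is correct and follows the same route the paper intends, namely the classical Foia\cs{} argument: the a.e.\ equivalence comes from reading \eqref{eq: Foias-Liouville equiv form} as the distributional ODE $\ell_\Phi'=m_\Phi$ with bounded right-hand side (plus the $\theta(0)$-term identifying the initial value), and the everywhere statement comes from redefining $\mu_t$ on a single null set as weak-$*$ limits in the compact metrizable ball $B_R^H$ using a countable determining subfamily of $\CalT$. The one step you leave implicit --- that agreement with $\tilde{\ell}_{\Phi_k}(t)$ on the countable family upgrades to $\int_H\Phi\dd\hat{\mu}_t=\tilde{\ell}_\Phi(t)$ for \emph{every} $\Phi\in\CalT$ and every $t$, which follows because for $t\notin N$ the determining property forces $\mu_{t_n}\overset{*}{\rightharpoonup}\mu_t$ along any $t_n\to t$ with $t_n\notin N$ --- is exactly what the cited \cite[\S 3 Theorem 2]{F72} supplies, so the deferral matches the paper's own treatment.
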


\begin{proof}[Proof of \Cref{thm: main thm moment based <-> phase space}]
Due to \Cref{lem: equiv div free} and \Cref{prop: equiv meas}, all that remains to show is the equivalence between the versions of the Foia\cs-Liouville equations \eqref{eq: Foias-Liouville equiv form} and \eqref{eq: moment based Liouville} as well as the equivalence of the energy inequalities \eqref{eq: phase space energy inequ} and \eqref{eq: moment based energy inequ}. The latter, however, follows in a rather straightforward way from \Cref{thm: corr measures main thm}.\\
Suppose first that $\lbrace \mu_t \rbrace_{0\leq t \leq T}$ is a phase-space statistical solution with initial distribution $\overline{\mu}$ and uniformly bounded support in $B_R^H$. We then show that $\corrM, \overline{\corrM}$ satisfy \eqref{eq: moment based Liouville}.\\
Let $k \in \N, g_1,...,g_k \in H^L$ as well as $\theta \in C_c^1([0,T))$ and consider $g = g_1 \otimes ... \otimes g_k$ as in \Cref{def: stats sol moment based} iii). For a smooth cut-off function $\chi \in C_c^\infty(\R^k)$, we define
\[p(s) = s_1...s_k\chi(s),\quad s \in \R^k,\]
so that $p \in C^1_c(\R^k)$. Using $\Phi(u) = p((u,g)_{L^2}^k)$, equation \eqref{eq: Foias-Liouville equiv form} reads as
\begin{equation}\label{eq: main thm proof eq1}
\begin{split}
&-\int_0^T\int_H \theta'(t)\prod_{j=1}^k (u,g_j)_{L^2}\chi((u,g)_{L^2}^k)\\
&\qquad+\bigg( u \otimes u, \theta(t)\sum_{i=1}^k \prod_{\substack{j=1\\j\neq i}}^k (u,g_j)_{L^2}\nabla g_i\chi((u,g)_{L^2}^k) \bigg)_{L^2}\dd\mu_t(u)\dt + \varepsilon(\chi)\\
&= \int_H \theta(0)\prod_{j=1}^k (u_0,g_j)_{L^2}\chi((u_0,g)_{L^2}^k)\dd\overline{\mu}(u_0)\\
\Leftrightarrow 
&-\int_0^T\int_H \theta'(t)\prod_{j=1}^k (u,g_j)_{L^2}\chi((u,g)_{L^2}^k)\\
&+\theta(t)\sum_{i=1}^k(u,g_1)_{L^2} ... (u,g_{i-1})_{L^2}(u \otimes u, \nabla g_i)_{L^2}(u,g_{i+1})_{L^2} ... (u,g_k)_{L^2}\chi((u,g)_{L^2}^k)\dd\mu_t(u)\dt 
 + \varepsilon(\chi)\\
&= \int_H \theta(0)\prod_{j=1}^k (u_0,g_j)_{L^2}\chi((u_0,g)_{L^2}^k)\dd\overline{\mu}(u_0),
\end{split}
\end{equation}
where $\varepsilon(\chi) = -\int_0^T\int_H (u \otimes u, \theta(t)\prod_{j=1}^k (u,g_j)_{L^2}\sum_{i=1}^k\partial_i\chi((u,g)_{L^2}^k)g_i)_{L^2}\dd\mu_t(u)\dt$ is the error term depending on $\chi$. However, choosing a sequence of smooth cut-off functions $(\chi_n)_{n\in\N} \in C_c^\infty(\R^k)$ such that 
\[0 \leq \chi_n \leq 1, \chi_n \equiv 1 \text{ on } B_n^H \text{ and } \|\nabla \chi_n\|_{L^\infty} \leq 1,\]
the uniform bound of the supports \eqref{eq: main thm bounded supp} and the dominated convergence theorem yield that \eqref{eq: main thm proof eq1} also holds in the limit, i.e. without $\chi$ and $\varepsilon(\chi)$. We particularly used that each integrand in \eqref{eq: main thm proof eq1} and the error term $\varepsilon$ can be bounded by a constant times $\|\theta\|_{C^1}\max_{i=1,...,k}\|\g_i\|_{C^1}(1+\|u\|_{L^2}^{k+2})$. Using this estimate along with the uniform bound \eqref{eq: main thm bounded supp} of the supports $\lbrace \mu_t \rbrace_{0 \leq t \leq T}$ and $\overline{\mu}$, one can also show by a standard cut-off argument that the identity in \eqref{eq: corr meas main thm identity} may be applied to the above integrals. Transforming \eqref{eq: main thm proof eq1} immediately yields \eqref{eq: moment based Liouville}.\\
We now suppose that $\corrM$ is a moment based statistical solution with initial correlation measure $\overline{\corrM}$ and show the converse, i.e. that $\lbrace \mu_t \rbrace_{0 \leq t \leq T}$ is a phase space statistical solution with initial distribution $\overline{\mu}$.\\
Let $\phi \in C_c^1(\R^k)$, $g_1,...,g_k \in H^L$ and $\theta \in C^1_c([0,T))$.\\
There exists a sequence of multivariate polynomials $(P_n)_{n\in\N}$ such that $P_n \to \phi\,(n\to\infty)$ in $C^1([-l,l]^k)$ (see for instance \cite{VV16}), where we will choose $l > 0$ large enough later. Fix $n \in \N$ for the moment and write $P_n$ in the form
\[P_n(s) = \sum_{|\alpha|\leq N}a_\alpha s_1^{\alpha_1}...s_k^{\alpha_k},\]
where $|\alpha|$ denotes the order of a multi-index $\alpha = (\alpha_1,...,\alpha_k) \in \N^k$ and $a_\alpha \in \R$ are real coefficients for each such multi-index $\alpha$.\\
Then we may evolve the moments $(|\alpha| \leq N)$
\[(\xi_1 \otimes ... \otimes \xi_{\alpha_1}) \otimes ... \otimes (\xi_{|\alpha|-\alpha_k} \otimes ... \otimes \xi_{|\alpha|})\]
tested against 
\[(g_1(x_1) \otimes ... \otimes g_1(x_{\alpha_1})) \otimes ... \otimes (g_k(x_{|\alpha|-\alpha_k}) \otimes ... \otimes g_k(x_{|\alpha|}))\]
according to \eqref{eq: moment based Liouville}. We may also consider the weighted sums $\sum_{|\alpha| \leq N}$ of these equations.\\
The first resulting term on the left-hand side in \eqref{eq: moment based Liouville} 
\begin{align*}
&\int_0^T \int_{(\T^2)^k} \langle \nu_{t,x}^{|\alpha|}, \theta'(t)(\xi_1 \otimes ... \otimes \xi_{\alpha_1}) \otimes ... \otimes (\xi_{|\alpha|-\alpha_k} \otimes ... \otimes \xi_{|\alpha|})\rangle :\\
&\qquad : (g_1(x_1) \otimes ... \otimes g_1(x_{\alpha_1})) \otimes ... \otimes (g_k(x_{|\alpha|-\alpha_k}) \otimes ... \otimes g_k(x_{|\alpha|}))\dx\dt
\end{align*}
transforms via \eqref{eq: corr meas main thm identity} to
\[\int_0^T \int_H \theta'(t)(u,g_1)_{L^2}^{\alpha_1}...(u,g_k)_{L^2}^{\alpha_k}\dd\mu_t(u)\dt.\]
We will not write out the other terms as this gets quite messy and rather note that all terms can be similarly transformed so that multiplied by $a_\alpha$ and summed up over $|\alpha| \leq N$ we obtain
\begin{equation}
\begin{split}
&-\int_0^T\int_H \theta'(t)P_n((u,g)_{L^2}^k) + \sum_{i=1}^k(u \otimes u, \theta(t)\partial_i P_n((u,g)_{L^2}^k)\nabla g_i)_{L^2}\dd\mu_t(u)\dt\\
&\qquad =\int_0^T\int_H \theta(0)P_n((u,g)_{L^2}^k)\dd\overline{\mu}(u)\dt.
\end{split}
\end{equation}

The arguments of $P_n$ and $\partial_i P_n$ in the integral are bounded in the Euclidean distance from above due to
\begin{equation}\label{eq: bound (u,g)}
|(u,g)_{L^2}^k| \leq \sqrt{k}\max_{i=1,...,k}\|g_i\|_{L^2}\|u\|_{L^2}^k.
\end{equation}
As $\mu_t$, for a.e. $0 \leq t \leq T$, and $\overline{\mu}$ have support in $B_R^H$, choosing $l = \sqrt{k}\max_{i=1,...,k}\|g_i\|_{L^2}R^k$ above and making use of the uniform approximation of $\phi$ by $(P_n)_{n\in\N}$ in $C^1([-l,l]^k)$, passing to the limit $(n \to \infty)$ yields \eqref{eq: Foias-Liouville equiv form}.
\end{proof}

\begin{remark}
In the proof above of \Cref{thm: main thm moment based <-> phase space}, it seems difficult to significantly relax the assumptions of uniformly bounded supports of $\overline{\mu}$ and $\lbrace \mu_t \rbrace_{0\leq t \leq T}$ due to the uniform approximation of a compactly supported function by polynomials. 
\end{remark}

\subsection{Construction of Statistical Solutions}\label{subsec: construction stats sols}
In this subsection, we would like to comment on and draw some comparisons between established methods by which one can construct the statistical solutions that we described in the previous part. We will focus here on the following three methods:
\begin{enumerate}[I)]
\item Compactness of approximating sequences of time-parametrized measures;
\item Discrete approximations based on the Krein-Milman theorem;
\item Measurable selections and push-forward constructions. 
\end{enumerate}

However, we would at least like to mention that for (non-autonomous) dissipative dynamical systems, there is a fairly general way of using the theory of (pullback) attractors and Banach limits to construct statistical and stationary statistical solutions. We refer the reader to some of the work by Foia\cs, Rosa and Teman \cite{FRT15,FRT19}, by \L{}ukaszewicz et al. \cite{L08,LRR11,LR14}, see also \cite{LK16}, and Zhao et al. \cite{CLZ20,LSZ20,CSZ20}.

For each of the aforementioned methods, we will sketch how they can be applied to prove the following existence result.

\begin{theorem}\label{thm: general existence thm stats sols}
Let $\overline{\mu}$ be a Borel probability measure on $H$ satisfying
\begin{equation}\label{eq: general ex thm, bdd mean enstrophy}
\int_{H}\|u_0\|_{H^1}^2\dd\overline{\mu}(u_0) < \infty.
\end{equation}
Then there exists a phase space statistical solution of the 2D incompressible Euler equations $\lbrace \mu_t \rbrace_{0\leq t \leq T}$ with initial distribution $\overline{\mu}$ and the energy inequality
\begin{equation}\label{eq: energy inequ construction}
\int_H \|u\|_{L^2}^2\dd\mu_t(u) \leq \int_H \|u_0\|^2_{L^2}\dd\overline{\mu}(u_0)
\end{equation}
holds for a.e.\ every $0 \leq t \leq T$. 
\end{theorem}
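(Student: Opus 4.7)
My plan is to implement method~I from the preceding discussion, namely a vanishing viscosity scheme: construct an approximating family of \emph{trajectory} statistical solutions of the 2D Navier-Stokes equations, extract a narrow limit on $C([0,T];H_w)$, verify that it lives on $\CalU$, and then apply Theorem~\ref{thm: traj stats sol -> phase space stats sol} to pass to phase space. More explicitly, for each $\nu>0$ let $S^{\nu}_{t}\colon H\to H$ denote the continuous Leray-Hopf solution operator of the 2D Navier-Stokes system, and set $T^{\nu}\colon H\to C([0,T];H_w)$, $T^{\nu}(u_0)(t):=S^{\nu}_{t}u_0$. This map is Borel, so $\mu^{\nu}:=(T^{\nu})_{\sharp}\overline{\mu}$ is a well-defined Borel probability measure on $C([0,T];H_w)$ with $(\Pi_0)_{\sharp}\mu^{\nu}=\overline{\mu}$.

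Two deterministic a priori bounds, both specific to 2D, drive the compactness argument. The usual energy inequality gives $\|S^{\nu}_{t}u_0\|_{L^2}\leq\|u_0\|_{L^2}$, and the vorticity equation yields the enstrophy bound $\|S^{\nu}_{t}u_0\|_{H^1}\leq\|u_0\|_{H^1}$ for a.e.\ $t$. Integrating the latter against $\overline{\mu}$ and using \eqref{eq: general ex thm, bdd mean enstrophy} produces a $\nu$-uniform estimate
\begin{equation*}
\int_{C([0,T];H_w)}\esssup_{0\leq t\leq T}\|u(t)\|_{H^1}^{2}\dd\mu^{\nu}(u)\leq \int_{H}\|u_0\|_{H^1}^{2}\dd\overline{\mu}(u_0)<\infty.
\end{equation*}
Combined with a uniform bound of $\partial_{t}u$ in $L^{2}(0,T;H^{-L})$ read off from the Navier-Stokes equations (with $L$ as in the definition of $\CalT$), Rellich and Aubin-Lions upgrade this to tightness of $\{\mu^{\nu}\}_{\nu>0}$ on $C([0,T];H_w)$, since the enstrophy bound pins trajectories into a set whose closure is weakly compact in $H$ at each time and equicontinuous in $H_w$ uniformly in $\nu$.

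By Prokhorov, extract a subsequence $\mu^{\nu_k}\to\mu$ narrowly on $C([0,T];H_w)$. The real work is to show $\mu(\CalU)=1$, and for this I would invoke Skorokhod's representation theorem to obtain, on an auxiliary probability space, $C([0,T];H_w)$-valued random variables $U^{(k)},U$ with laws $\mu^{\nu_k},\mu$ such that $U^{(k)}\to U$ almost surely. The uniform $H^1$ bound transfers almost surely, so Aubin-Lions upgrades the convergence along a further subsequence to strong convergence in $L^{2}((0,T)\times\T^{2})$, which is exactly what is needed to pass to the limit in the nonlinear flux $u\otimes u$ in the weak formulation; the viscous term vanishes since $\nu_k\to 0$. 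Hence $U$ is a.s.\ a weak solution of~\eqref{eq: IE}, and by weak lower semicontinuity of $\|\cdot\|_{L^2}$ on $H_w$ it a.s.\ satisfies~\eqref{eq: energy ineq EE}; Borel measurability of $\CalU$ inside $C([0,T];H_w)$ follows by writing it as a countable intersection of sets cut out by continuous integral (in)equalities tested against a dense family of test vector fields. Almost sure membership in $\CalU$ translates into $\mu(\CalU)=1$.

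To finish, the initial condition is preserved because $\Pi_{0}\circ T^{\nu}=\mathrm{id}_{H}$ gives $(\Pi_{0})_{\sharp}\mu^{\nu}=\overline{\mu}$, and narrow convergence plus continuity of $\Pi_{0}\colon C([0,T];H_w)\to H_w$ yield $(\Pi_{0})_{\sharp}\mu=\overline{\mu}$. The trajectory energy inequality \eqref{eq: energy ineq traj stats sols} for $\mu$ follows from the (equality) version for each $\mu^{\nu_k}$ by Fatou together with narrow convergence tested against the weakly lower semicontinuous functional $u\mapsto\|u(t)\|_{L^2}^{2}$. Thus $\mu$ is a trajectory statistical solution in the sense of Definition~\ref{def: stats sol trajectory}, and Theorem~\ref{thm: traj stats sol -> phase space stats sol} delivers $\{\mu_t\}_{0\leq t\leq T}:=\{(\Pi_{t})_{\sharp}\mu\}_{0\leq t\leq T}$ together with \eqref{eq: energy inequ construction}. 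The principal obstacle throughout is the passage to the limit in the nonlinearity; it is precisely the mean enstrophy hypothesis \eqref{eq: general ex thm, bdd mean enstrophy} that, in tandem with the 2D enstrophy monotonicity for Navier-Stokes, supplies the strong compactness needed to close this step, an ingredient with no direct 3D analogue.
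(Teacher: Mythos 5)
Your route is viable but genuinely different from all three constructions in the paper: you run the vanishing-viscosity limit at the level of \emph{trajectory} measures $\mu^{\nu}=(T^{\nu})_{\sharp}\overline{\mu}$ on $C([0,T];H_w)$ and compactify with Prokhorov/Skorokhod, whereas the paper's Sketch~1 runs the same viscous approximation at the level of the time-parametrized \emph{phase-space} measures $\mu_t^{\varepsilon}$ and extracts a weak-* accumulation point in $L^1(0,T;C_2(H))'$ via Foia\cs' compactness theorem (\Cref{thm: foias comp thm}), while Sketches~2 and~3 build a trajectory statistical solution directly from Euler solutions (Krein--Milman discretization of $\overline{\mu}$, resp.\ measurable semiflow selection) without any viscous regularization. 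Your approach buys something the paper's Sketch~1 does not: because you land on a genuine trajectory statistical solution and then invoke \Cref{thm: traj stats sol -> phase space stats sol}, you obtain the Foia\cs-Liouville equation \eqref{eq: foias-liouville def} for \emph{every} $t$, whereas Sketch~1 only yields the a.e.\ form \eqref{eq: Foias-Liouville equiv form} unless $\overline{\mu}$ has bounded support. The price is that you must handle probability on the non-Polish space $C([0,T];H_w)$; Prokhorov and Skorokhod should be applied after replacing $H_w$ by a metrizable substitute (e.g.\ $C([0,T];H^{-s})$, $s>2$, as the paper itself suggests, or bounded sets of $H$ where the weak topology is metrizable), which the uniform $H^1$ moment bound permits.

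There is one step that, as written, would fail: ``the uniform $H^1$ bound transfers almost surely'' after Skorokhod. For each fixed $k$ you only know that $\|U^{(k)}(0)\|_{H^1}$ has law $\|\cdot\|_{H^1\sharp}\overline{\mu}$, hence is a.s.\ finite; but a sequence of identically distributed unbounded random variables need not have an a.s.\ finite supremum, so for a fixed sample point $\omega$ you cannot conclude $\sup_k\|U^{(k)}(\omega)\|_{L^\infty_tH^1_x}<\infty$, which is exactly what the pathwise Aubin--Lions argument and the passage to the limit in $u\otimes u$ require. The standard repair is to apply Skorokhod to the laws of the augmented variables $\bigl(U^{(k)},\|U^{(k)}(0)\|_{H^1}\bigr)$ on $C([0,T];H_w)\times\R$ (the second marginal is independent of $k$ and tight), so that a.s.\ convergence of the second component supplies the missing pathwise uniform bound. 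Two smaller points: the functionals cutting out $\CalU$, such as $u\mapsto\int_0^T\int_{\T^2}u\otimes u:\nabla\varphi\,\theta\dx\dt$, are not continuous on $C([0,T];H_w)$ (the quadratic term is not weakly continuous), only Borel, so your measurability claim for $\CalU$ should be argued via pointwise limits of continuous (e.g.\ mollified) functionals rather than via continuity; and the viscous energy balance is an inequality for Leray--Hopf solutions, not an equality, though this does not affect the Fatou/Portmanteau step, which is correct as stated.
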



Under the assumption of initial data in $H^1$, there exist weak solutions of the Euler equations continuous with values in $H$ or even $H^1$, not just in $H_w$. In the constructions in II) and III), we may therefore more naturally consider statistical solutions as Borel probability measures on $H$ or $C([0,T];H)$, in contrast to the slightly more general formulations of the previous section.

We formulate the following statement globally in time in order to also be able to apply it in III), when considering semiflow selections.

\begin{prop}\label{prop: existence euler}
Let $u_0 \in H^1$, then there exists a global weak solution $u \in C_{loc}([0,\infty);H^1)$ of the Euler equations such that its vorticity $\omega(u) \in C_{loc}([0,\infty);L^2(\T^2))$ is a renormalized solution of the vorticity formulation of the Euler equations and the following properties hold:
\begin{enumerate}[i)]
\item $\|u(t)\|_{L^2} \leq \|u(t')\|_{L^2}$ for all $0 \leq t' \leq t < \infty$,
\item $\|\omega(u)(t)\|_{L^2} = \|\omega(u)(0)\|_{L^2}$ for all $0 \leq t < \infty$,
\item $\esssup_{t' \leq \tau \leq t} \|\partial_t u(\tau)\|_{H^{-s}} \leq C\|u(t')\|_{L^2}$ for some $s > 0, C = C(s) > 0$ and all $0 \leq t' \leq t < \infty$. 
\end{enumerate}
\end{prop}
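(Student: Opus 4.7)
The plan is to mollify the initial datum to obtain smooth divergence-free $u_0^\eps$ with $u_0^\eps \to u_0$ in $H^1$, solve the Euler equations classically for each $\eps>0$, and extract a subsequential limit. In two dimensions the vorticity $\omega^\eps$ is transported by the associated flow, so classical solutions exist globally in $C^\infty$ (Wolibner--Kato) and both the energy $\|u^\eps(t)\|_{L^2}^2$ and the enstrophy $\|\omega^\eps(t)\|_{L^2}^2$ are exactly conserved. In particular $(u^\eps)$ is uniformly bounded in $L^\infty_{\mathrm{loc}}([0,\infty); H^1)$. Using the momentum equation $\partial_t u^\eps = -\mathbb{P}\,\mathrm{div}(u^\eps \otimes u^\eps)$ together with the 2D Sobolev embedding $H^1\hookrightarrow L^p$ for all $p<\infty$, I would bound $\partial_t u^\eps$ uniformly in $L^\infty_{\mathrm{loc}}([0,\infty);H^{-s})$ for some $s>0$, which upon passing to the limit (and using the monotonicity of the $L^2$-norm to replace $\|u\|_{L^2}^2$ by $\|u(t')\|_{L^2}$ up to a constant) yields property (iii).

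An Aubin--Lions argument then extracts a subsequence converging strongly in $C_{\mathrm{loc}}([0,\infty);L^2)$ and weakly-$*$ in $L^\infty_{\mathrm{loc}}([0,\infty);H^1)$ to a divergence-free limit $u$; the nonlinearity passes to the limit, so $u$ is a weak Euler solution, and property (i) is inherited from the exact conservation at the approximating level via weak lower semicontinuity of the $L^2$-norm. The crux of the argument is property (ii): for this I would pass to the vorticity formulation $\partial_t \omega + \mathrm{div}(u\omega)=0$ and invoke the DiPerna--Lions theory of renormalized solutions. With $u \in L^\infty_t H^1_x$ divergence-free and $\omega \in L^\infty_t L^2_x$, the commutator lemma yields that $\omega$ is a renormalized solution of the transport equation, and testing with $\beta(r)=r^2$ then produces the identity $\|\omega(t)\|_{L^2}^2 = \|\omega(0)\|_{L^2}^2$ for all $t\geq 0$.

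Once enstrophy conservation is secured, weak continuity of $\omega$ in $L^2$ (coming from the equation and the bound on $\partial_t\omega$ in a negative Sobolev norm) together with norm conservation upgrades $\omega$ to an element of $C_{\mathrm{loc}}([0,\infty); L^2)$ by the Radon--Riesz property, and then $u$ lies in $C_{\mathrm{loc}}([0,\infty); H^1)$ because the 2D Biot--Savart operator is continuous from $L^2$ to $H^1$ on mean-zero divergence-free fields. The principal obstacle is the renormalization step itself: the regularity $u\in L^\infty_t H^1_x$ sits right at the borderline of the $W^{1,1}$-type hypothesis of the DiPerna--Lions commutator estimate, and one must verify that in the divergence-free 2D setting the estimate genuinely closes with $\omega$ only in $L^2$, as opposed to some $L^p$ with $p>2$. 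Once this is in place, all remaining assertions follow from soft functional-analytic arguments.
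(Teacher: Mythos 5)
Your proposal is correct and follows essentially the same (classical) route the paper relies on: the paper offers no proof beyond citing DiPerna--Majda, DiPerna--Lions, Lopes Filho--Mazzucato--Nussenzveig Lopes and a diagonal argument for globality in time, and your smoothing/compactness/renormalization scheme is precisely what those references carry out. Two small remarks: the DiPerna--Lions commutator lemma does close at the exact borderline $u\in L^\infty_t W^{1,2}_x$, $\omega\in L^\infty_t L^2_x$ (since $\tfrac12+\tfrac12\leq 1$), after which enstrophy conservation follows by truncating $\beta(r)=r^2$ because the renormalization property is only formulated for $\beta$ with bounded derivative; and the estimate you actually obtain in (iii) is $C\|u(t')\|_{L^2}^2$ rather than $C\|u(t')\|_{L^2}$ --- monotonicity of the energy does not remove the square --- but the same quadratic discrepancy is present in the statement itself and is harmless for the purely qualitative way (iii) is used later.
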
 

Here, the {\em vorticity} of a flow field $u$ is defined as $\omega=\partial_{x_1}u_2-\partial_{x_2}u_1$. The {\em vorticity formulation} of the Euler equations is obtained by taking the curl of the momentum equation, whereby 
\begin{equation}\label{eq: vort equ}
\partial_t\omega+u\cdot\nabla\omega =0.
\end{equation}
A distributional solution $\omega$ of this vorticity transport equation is called {\em renormalized} if for every $\beta\in C^1(\R;\R)$ with $\beta(0)=0$ and bounded derivative, also
\begin{equation*}
\partial_t\beta(\omega)+u\cdot\nabla\beta(\omega) =0.
\end{equation*}

The subscript $loc$ here means that we consider the compact-open topology on these spaces of continuous functions. The proof of this proposition on arbitrary finite time intervals is somewhat classical (cf. \cite{DM87conc,DL89,LMN05,HLNS99}). For global existence, one can employ a diagonal argument.
\hfill\\

\subsubsection{Compactness of Approximating Sequences of Time-Parametrized Measures}
The idea here is to construct a statistical solution $\lbrace \mu_t \rbrace_{0 \leq t \leq T}$ by considering approximating sequences of families of Borel probability measures $(\lbrace \mu_t^N \rbrace_{0 \leq t \leq T})_{N\in\N}$ and using compactness theorems.

We state here prototypically the arguments from Foia\cs' first article on statistical solutions of the Navier-Stokes equations \cite{F72}. There, given an appropriate fixed orthonormal base of $H$, the $N$-th order Galerkin solution operators $S_t^N$,$0\leq t \leq T$, were considered. Then, given a Borel probability measure $\overline{\mu}$ on $H$, the approximations $\mu_t^N \coloneqq {S_t^N}_\sharp\overline{\mu}$ were studied.

We summarize the compactness arguments that Foia\cs\, derived in the following theorem, which hold generally as stated and not just for the above example of pushforward Galerkin approximations. For this, we define $C_2(H)$ to be the space of all continuous functions $\varphi: H \to \R$ such that $\|\varphi\|_{C_2(H)} \coloneqq \sup_{u \in H} \frac{|\varphi(u)|}{1+\|u\|^2_{L^2}} < \infty$. Then $(C_2(H),\|\cdot\|_{C_2(H)})$ is a Banach space, not reflexive nor separable, which was a main difficulty in the proof.

Also, we recall that for a topological space $X$, we say that for a subset $A$ of $X$, $x \in X$ is an accumulation point if for every open neighborhood $U$ of $x$, $A \cap (U \setminus \lbrace x \rbrace)$ is nonempty.\\
As usual and previously used, the weak-* topology on the dual of a Banach space $Y$ is the coarsest topology for which the linear functionals $\lbrace f_y \rbrace_{y \in Y}$ on $Y'$, given by $f_y(\varphi) = \varphi(y)$ for all $\varphi \in Y',y \in Y$, are continuous.

In particular, if $\varphi \in Y'$ is a weak-* accumulation point of $A \subset Y'$, then for every $\varepsilon > 0$ and $y_1,...,y_k \in Y$, $A \cap \lbrace \psi \in Y' : |\psi(y_i) - \varphi(y_i)| < \varepsilon \text{ for all } i = 1,...,k\rbrace \neq \emptyset$.

The following theorem holds in both two and three dimensions.

\begin{theorem}\label{thm: foias comp thm}
Suppose that a sequence $(\lbrace \mu_t^{N} \rbrace_{0 \leq t \leq T})_{N\in\N}$ of families of Borel probability measures on $H$ satisfies
\begin{equation}\label{eq: foias comp thm assump1}
t \mapsto \int_H \varphi(u)\dd\mu_t^N(u) \text{ is Lebesgue measurable for every } N\in\N, \varphi \in C_b(H),
\end{equation}
\begin{equation}\label{eq: foias comp thm assump2}
C_1 \coloneqq \sup_{N\in\N}\esssup_{0\leq t \leq T} \int_H \|u\|_{L^2}^2\dd\mu_t^{N}(u) < \infty,
\end{equation}
\begin{equation}\label{eq: foias comp thm assump3}
C_2 \coloneqq \sup_{N\in\N}\int_0^T \int_H \|u\|_{H^1}^2\dd\mu_t^{N}(u)\dt < \infty,
\end{equation}
and consider the corresponding functionals $(F^{N})_{N\in\N} \subset L^1(0,T;C_2(H))'$, given by $F^{N}(\Phi) = \int_0^T\int_H \Phi(t,u)\dd\mu_t^{N}(u)\dt$ for all $N \in \N$ and $\Phi \in L^1(0,T;C_2(H))$. Then the following holds:
\begin{enumerate}[i)]
\item There exists a weak-* accumulation point $F$ of $(F^{N})_{N\in\N}$ in $L^1(0,T;C_2(H))'$ and a function $G: (0,T) \to C_2(H)'$ that represents $F$ in the sense that 
\begin{equation*}
\sup_{0<t<T}\|G(t)\|_{C_2(H)'} = \|F\|_{L^1(0,T;C_2(H))'}
\end{equation*}
 and
\begin{equation*}
F(\Phi) = \int_0^T \langle G(t),\Phi(t) \rangle\dt
\end{equation*}
for all $\Phi \in L^1(0,T;C_2(H))$, where $\langle G(t),\Phi(t)\rangle := G(t)(\Phi(t))$. Furthermore, there exists a family of probability measures $\lbrace \mu_t\rbrace_{0 \leq t \leq T}$ also satisfying \eqref{eq: foias comp thm assump1}-\eqref{eq: foias comp thm assump3} and a Lebesgue null set $E \subset (0,T)$ such that
\begin{equation}
\langle G(t),\varphi \rangle = \int_H \varphi(u)\dd\mu_t(u) \text{ and } F(\Phi) = \int_0^T \int_H \Phi(t,u)\dd\mu_t(u)\dt
\end{equation}
for all $t\in (0,T)\setminus E$ and all $\varphi \in C_b(H), \Phi \in L^1(0,T;C_b(H))$.
\item Suppose that additionally $\|\cdot\|_{L^2}^2$ is uniformly integrable with respect to the sequence $(\lbrace \mu_t^{(N)}\rbrace_{0\leq t\leq T} \rbrace)_{N\in\N}$ in the sense that
\begin{equation}\label{eq: foias comp thm assump4}
\lim_{r\to \infty}\sup_{N\in\N} \esssup_{0\leq t \leq T}\int_{\lbrace \|u\|_{L^2}>r \rbrace} \|u\|_{L^2}^2\dd\mu^{(N)}_t(u) = 0.
\end{equation}
Then
\[F(\Phi) = \int_0^T\int_H \Phi(t,u)\dd\mu_t(u)\dt \]
for all $\Phi \in L^1(0,T;C_2(H))$.
\end{enumerate}
\end{theorem}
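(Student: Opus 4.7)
Assumption \eqref{eq: foias comp thm assump2} yields
\[
\|F^N\|_{L^1(0,T;C_2(H))'} \leq \esssup_{0<t<T}\int_H(1+\|u\|_{L^2}^2)\dd\mu_t^N(u) \leq 1+C_1,
\]
uniformly in $N$. Since the closed ball of that radius in $L^1(0,T;C_2(H))'$ is weak-$*$ compact by Banach--Alaoglu, $(F^N)_N$ admits a weak-$*$ accumulation point $F$. Note that $C_2(H)$ is not separable, so one does not get a convergent subsequence in general, which is precisely why the theorem is phrased in terms of accumulation. I would then invoke the standard duality $L^1(0,T;X)'\cong L^\infty_{\operatorname{w}}(0,T;X')$ (with weak-$*$ measurability convention on the right) for $X=C_2(H)$ to obtain the representing $G:(0,T)\to C_2(H)'$ with $\esssup_t\|G(t)\|_{C_2(H)'}=\|F\|$ and $F(\Phi)=\int_0^T\langle G(t),\Phi(t)\rangle\dt$ for all $\Phi\in L^1(0,T;C_2(H))$.

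\textbf{Step 2 (Realising $G(t)$ as a probability measure).} This is the crux of the argument. For $\theta\in L^1(0,T)$ and $\varphi\in C_b(H)$, the function $\Phi(t,u):=\theta(t)\varphi(u)$ lies in $L^1(0,T;C_2(H))$, and the accumulation property yields a subnet $(N_\alpha)$ along which $\int_0^T\theta(t)\int_H\varphi\dd\mu_t^{N_\alpha}\dt\to\int_0^T\theta(t)\langle G(t),\varphi\rangle\dt$. Taking $\theta,\varphi\geq 0$ gives positivity $\langle G(t),\varphi\rangle\geq 0$ for a.e.\ $t$, and $\varphi\equiv 1$ gives the normalisation $\langle G(t),1\rangle=1$ a.e. The difficulty is to obtain a \emph{single} null set $E$ outside of which these properties hold \emph{simultaneously} for all $\varphi\in C_b(H)$. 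To circumvent the non-separability of $C_2(H)$, I would fix a countable dense sequence $\{v_j\}\subset L^2(\T^2;\R^2)$ together with countable dense subsets of each $C_c(\R^k)$, and work with the resulting countable collection of cylinder functions $\phi((u,v_{j_1}),\dots,(u,v_{j_k}))$; then I would exploit the $L^2$-tightness provided by \eqref{eq: foias comp thm assump2} (Chebyshev on weakly compact balls of $H$) to extend, by a Daniell--Stone/Riesz representation argument, to a genuine Borel probability measure $\mu_t$ on $H$ for each $t\notin E$. The claimed identity for $F(\Phi)$ with $\Phi\in L^1(0,T;C_b(H))$ then follows by density of tensor products and dominated convergence, and the bounds \eqref{eq: foias comp thm assump2}--\eqref{eq: foias comp thm assump3} for $(\mu_t)$ are inherited via weak-$*$ lower semicontinuity applied to the truncations $u\mapsto\min(\|u\|_{L^2}^2,M)$ and $u\mapsto\min(\|u\|_{H^1}^2,M)$ letting $M\to\infty$.

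\textbf{Step 3 (Part ii).} Given $\Phi\in L^1(0,T;C_2(H))$, I would truncate $\Phi_r(t,u):=\Phi(t,u)\chi_r(\|u\|_{L^2})$ with a smooth cutoff $\chi_r$ equal to $1$ on $[0,r]$ and supported in $[0,r+1]$, so that $\Phi_r\in L^1(0,T;C_b(H))$. The pointwise bound $|\Phi(t,u)|\leq\|\Phi(t,\cdot)\|_{C_2(H)}(1+\|u\|_{L^2}^2)$ together with the uniform integrability \eqref{eq: foias comp thm assump4} gives $F^N(\Phi-\Phi_r)\to 0$ as $r\to\infty$ uniformly in $N$, hence the same for the accumulation point $F$. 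Weak-$*$ lower semicontinuity transfers the same uniform integrability property to $(\mu_t)$, so the identity for $\Phi_r$ established in part i) passes to the limit $r\to\infty$, yielding the full representation. The \textbf{main obstacle} is clearly Step 2: producing, despite the non-separability of $C_2(H)$, a common null set $E$ and a probability-measure representation of $G(t)$ for every $t\notin E$; this is resolved by combining a countable cylinder subalgebra with the $L^2$-tightness afforded by \eqref{eq: foias comp thm assump2}.
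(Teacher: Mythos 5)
Your overall skeleton (Banach--Alaoglu plus the duality $L^1(0,T;X)'\cong L^\infty_{\operatorname{w}*}(0,T;X')$, representation of $G(t)$ via a countable family of test functionals and a common null set, then truncation for part ii)) is the same as in Foia\c{s}' original argument, to which the paper defers. Step 1 and Step 3 are essentially correct.

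There is, however, a genuine gap in Step 2, visible already from the fact that your argument never uses the enstrophy bound \eqref{eq: foias comp thm assump3}. The tightness you extract from \eqref{eq: foias comp thm assump2} via Chebyshev is tightness with respect to \emph{weakly} compact sets (closed balls of $H$), and your cylinder test functions are \emph{weakly} continuous. This machinery represents $G(t)$ by a Borel measure only when tested against weakly continuous bounded functions, i.e.\ on $C_b(H_w)$. The theorem, however, asserts the identity $\langle G(t),\varphi\rangle=\int_H\varphi\dd\mu_t(u)$ for all $\varphi\in C_b(H)$, i.e.\ for \emph{norm}-continuous bounded functions, and these cannot be reached from cylinder functions ``by density and dominated convergence'': cylinder functions are not uniformly dense in $C_b(H)$, and passing bounded pointwise limits through $G(t)$ is precisely the countable additivity one is trying to establish. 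That this step genuinely fails without further input is shown by $\mu^N=\delta_{u_N}$ with $u_N\rightharpoonup 0$, $\|u_N\|_{L^2}=1$: the limit functional sends every weakly continuous $\varphi$ to $\varphi(0)$, hence your construction would produce $\mu_t=\delta_0$, yet for the norm-continuous function $\varphi(u)=\min(\|u\|_{L^2},1)\in C_b(H)$ one has $G(\varphi)=1\neq 0=\int\varphi\dd\delta_0$. The missing ingredient is exactly \eqref{eq: foias comp thm assump3}: since balls of $H^1$ are compact in the norm topology of $H$ (Rellich), the bound $\sup_N\int_0^T\int_H\|u\|_{H^1}^2\dd\mu_t^N\dt<\infty$ gives, via Chebyshev in $u$ and in $t$, uniform tightness of the $\mu_t^N$ with respect to norm-compact subsets of $H$ outside sets of $t$ of small measure; this is what upgrades $G(t)$ to a genuine Radon measure on $(H,\|\cdot\|_{L^2})$ for a.e.\ $t$ and justifies the representation on all of $C_b(H)$. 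A secondary, fixable point: $u\mapsto\min(\|u\|_{H^1}^2,M)$ is only lower semicontinuous on $H$, so inheriting \eqref{eq: foias comp thm assump3} for the limit family requires a monotone approximation from below by continuous functions (e.g.\ $\min(\|P_nu\|_{H^1}^2,M)$) rather than testing with it directly.
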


Under an assumption of the form \eqref{eq: general ex thm, bdd mean enstrophy} on the mean initial vorticity, Chae in \cite{C91p} essentially argued along the same lines to obtain a first existence result for statistical solutions of the 2D incompressible Euler equations on a periodic domain, though the issue of $C_2(H)$ not being separable was kind of ignored. We sketch the proof here. In before, we need to point out that in the following, from \Cref{thm: foias comp thm}, we may only obtain a family of measures $\lbrace \mu_t \rbrace_{0\leq t \leq T}$, representing an accumulation point, which satisfies \eqref{eq: Foias-Liouville equiv form} instead of \eqref{eq: foias-liouville def}. We remark that we could apply \Cref{lem: equiv foias-liouville} in the following proof to actually obtain \eqref{eq: foias-liouville def} if $\overline{\mu}$ had bounded support in $H$ (the bounded support of $\lbrace \mu_t \rbrace_{0\leq t \leq T}$ would then follow from the construction).

\begin{proof}[Proof of \Cref{thm: general existence thm stats sols} (Sketch 1)]
We consider vanishing viscosity approximations. Therefore, let $\varepsilon > 0$ denote the kinematic viscosity and consider statistical solutions $\lbrace \mu_t^\varepsilon \rbrace_{0 \leq t \leq T}$ of the 2D Navier-Stokes equations with viscosity $\varepsilon$ and initial distribution $\overline{\mu}$ (cf. \cite[Theorems V.1.1, V.1.2]{FMRT01}). As the Navier-Stokes equations yield unique weak solutions, this statistical solution can be constructed as the pushforward measure of the initial distribution along a solution operator. Consequently, energy and enstrophy inequalities, among other properties, carry over. In particular, the following holds:
\begin{enumerate}[i)]
\item The mapping $\displaystyle t \mapsto \int_H \varphi(u)\dd\mu_t^{\varepsilon}(u)$ is continuous for every $\varphi \in C_b(H)$.
\item $\displaystyle\sup_{0\leq t\leq T}\int_H \|u\|_{L^2}^2\dd\mu^{\varepsilon}_t(u) \leq \int_H \|u_0\|_{L^2}^2\dd\overline{\mu}(u_0) \leq C \int_H \|u_0\|_{H^1}^2\dd\overline{\mu}(u_0)$.
\item $\displaystyle\int_0^T\int_H \|u\|_{H^1}^2\dd\mu_t^{\varepsilon}(u)\dt \leq T\int_H \|u_0\|_{H^1}^2\dd\overline{\mu}(u_0)$.
\item $\displaystyle\sup_{0\leq t \leq T}\int_{\lbrace \|u\|_{L^2} > r \rbrace} \|u\|_{L^2}^2\dd\mu_t^{\varepsilon}(u) \leq \int_{\lbrace \|u_0\|_{L^2} > r\rbrace} \|u_0\|_{L^2}^2\dd\overline{\mu}(u_0)$.
\item $\displaystyle-\int_0^T\int_H \theta'(t)\Phi(u) + (u\otimes u,\theta(t)\nabla\Phi'(u))_{L^2}\dd\mu^{\varepsilon}_t(u)\dt -\varepsilon \int_0^T\int_H (u,\theta(t)\Delta\Phi'(u))_{L^2} \dd\mu^{\varepsilon}_t(u)\dt = \int_H\theta(0)\Phi(u_0)\dd\overline{\mu}(u_0)$
for all $\Phi \in \CalT$ and $\theta \in C^1_c([0,T))$. 
\end{enumerate}
As ii) - iv) are uniform estimates in $\varepsilon$, we may indeed apply \Cref{thm: foias comp thm} to obtain a family of Borel probability measures $\lbrace \mu_t \rbrace_{0\leq t \leq T}$ representing a weak-* accumulation point in $L^1(0,T;C_2(H))'$. In particular iv) implies \eqref{eq: foias comp thm assump4}.
Note that all terms in v) are already in $L^1(0,T;C_b(H))$ or $L^1(0,T;C_2(H))$.
Therefore, one can conclude from \Cref{thm: foias comp thm} that $\lbrace \mu_t \rbrace_{0\leq t \leq T}$ satisfies \eqref{eq: Foias-Liouville equiv form}.\\
The energy inequality \eqref{eq: energy inequ construction} follows from the following considerations: Let $\varphi \in L^1(0,T)$ be non-negative. Due to ii) in \Cref{thm: foias comp thm}, for any $\eta > 0$, there exists $\varepsilon > 0$ such that 
\begin{equation}
\begin{split}
&\int_0^T\varphi(t)\int_H\|u\|_{L^2}^2\dd\mu_t(u)\dt = \int_0^T\int_H\varphi(t)\|u\|_{L^2}^2\dd\mu_t(u)\dt \leq \int_0^T\int_H \varphi(t)\|u\|_{L^2}^2\dd\mu_t^\varepsilon(u)\dt + \eta\\
= &\int_0^T\varphi(t)\int_H \|S_t^\varepsilon u_0\|_{L^2}^2\dd\overline{\mu}(u_0)\dt + \eta \leq \int_0^T\varphi(t)\int_H \|u_0\|_{L^2}^2\dd\overline{\mu}(u_0)\dt + \eta. 
\end{split}
\end{equation}
\end{proof}

Under a weak statistical scaling assumption, relating the second and third order longitudal structure functions, the 3D vanishing viscosity limit of statistical solutions was considered by Fjordholm, Mishra, Weber in \cite{FMW22} by employing a compactness result for correlation measures, first published in \cite{FLMW20} (see also \cite[Theorem 2.4]{LMP21} for a similar theorem). We will also state this compactness theorem in the following and roughly compare it to \Cref{thm: foias comp thm}.

We use the spaces $\CalH^{k,2}(\T^2;\R^2)$ and $\CalH^{k,2}_1(\T^2;\R^2)$ for $k \in \N$, as introduced prior to \Cref{def: stats sol moment based}.

\begin{theorem}\label{thm: corr meas comp thm}
Let $(\corrM_n)_{n\in\N} \subset \CalL^2([0,T],\T^2;\R^2)$ be a sequence of correlation measures satisfying
\begin{equation}\label{eq: corr meas comp thm finite l2 moment}
C_1 \coloneqq \sup_{n\in\N}\esssup_{0 \leq t \leq T} \int_{\T^2}\langle \nu^1_{n,t,x}, |\xi|^2\rangle\dx < \infty,
\end{equation}
\begin{equation}\label{eq: corr meas comp mod continuity}
\lim_{r \to 0}\limsup_{n\to\infty} \int_0^T \omega_r^p(\nu_{n,t}^2)\dt = 0
\end{equation}
with the integrand $\omega_r^p(\nu_{n,t}^2)$ defined as in \Cref{def: corr meas}. After passing to a subsequence, there exists $\corrM \in \CalL^2([0,T],\T^2;\R^2)$ such that
\begin{enumerate}[i)]
\item weak-* convergence $\bm{\nu}_{n} \overset{\ast}{\rightharpoonup} \bm{\nu}\,(n\to\infty)$ in $\CalH_0^{k*}([0,T],\T^2;\R^2)$ holds for every $k \in \N$,
\item $\corrM$ also satisfies \eqref{eq: corr meas comp thm finite l2 moment},
\item $\int_0^T \omega_r^p(\nu_t^2)\dt \leq \liminf_{n\to\infty}\int_0^T \omega_r^p(\nu_{n,t}^2)\dt$ for every $r > 0$,
\item for every $k \in \N$ and both non-negative $\varphi \in L^1([0,T] \times (\T^2)^k)$ and $\kappa \in C((\R^2)^k)$, letting $g(t,x,\xi) \coloneqq \varphi(t,x)\kappa(\xi)$, we have
\[\langle \nu^k,g\rangle \leq \liminf_{n \to \infty} \langle \nu_{n}^k,g\rangle.\] 
\item Moreover, if $(\corrM_n)_{n\in\N}$ has uniformly bounded support in the sense that for the associated Borel probability measures $(\lbrace \mu_t^n\rbrace_{0\leq t \leq T})_{n\in\N}$, given by \Cref{thm: corr measures main thm}, and some $M > 0$, $\|u\|_{L^2} \leq M$ holds for $\mu_t^n$-a.e.\ $u \in L^2(\T^2;\R^2)$, every $n \in \N$ and a.e.\ $0 \leq t \leq T$, then
\[\lim_{n\to\infty}\int_{(\T^2)^k}\bigg|\int_0^T \langle \nu^k_{n,t,x}, g(t,x)\rangle - \langle \nu^k_{t,x}, g(t,x)\rangle\dt\bigg|\dx = 0\]
for every $g \in \CalH^{k,2}_1([0,T],\T^2;\R^2)$.
\end{enumerate}
\end{theorem}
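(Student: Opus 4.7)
The overall strategy is to extract, via a diagonal Banach-Alaoglu argument, a weak-$*$ limit in each slot $\CalH_0^{k*}$, then verify that the limit is itself a correlation measure and satisfies the announced semicontinuity properties, and finally upgrade weak-$*$ to $L^1$-type convergence on the Lipschitz subclass $\CalH_1^{k,2}$ under the bounded-support hypothesis.

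\emph{Step 1 (extraction).} For each fixed $k$, the space $\CalH_0^k([0,T],\T^2;\R^2)=L^1([0,T]\times(\T^2)^k;C_0((\R^2)^k))$ is a separable Banach space, and its dual is $\CalH_0^{k*}$. Since each $\nu^k_{n,t,x}$ is a probability measure, the operator norm of $\nu^k_n$ as an element of the dual is bounded by $1$. Banach-Alaoglu, together with a diagonal argument in $k\in\N$, produces a single subsequence and $\nu^k\in\CalH_0^{k*}$, $k\in\N$, with $\nu^k_n\overset{\ast}{\rightharpoonup}\nu^k$ in every slot.

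\emph{Step 2 (limit is a correlation measure).} Weak-$*$ measurability of $\nu^k$ is automatic; the symmetry and consistency identities from \Cref{def: corr meas} are linear relations against $C_0$-test functions and pass to the limit by testing against arbitrary $\varphi\in L^1([0,T]\times(\T^2)^k)$ in $x,t$. To verify (ii), apply weak-$*$ convergence to $\kappa_M(\xi)=(|\xi|^2\wedge M)\chi_M(\xi)\in C_0$ with $\chi_M\in C_c((\R^2)^k)$ a cut-off, and then let $M\to\infty$ by monotone convergence; assumption \eqref{eq: corr meas comp thm finite l2 moment} gives the uniform bound on the liminf. That $\nu^k_{t,x}$ is a probability measure for a.e.\ $(t,x)$ is obtained from the unit-mass bound together with tightness: by consistency, $\int_{|\xi_i|>R}d\nu^k_{n,t,x}\leq R^{-2}\langle\nu^1_{n,t,x_i},|\eta|^2\rangle$, whose $(t,x)$-integral is $\leq C_1 R^{-2}$, and this is preserved under weak-$*$ limits. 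Claim (iv) is the standard lower semicontinuity of weak-$*$ convergence against non-negative integrands, obtained by truncating $\kappa$ by $C_0$ functions and applying monotone convergence. Claim (iii) is an instance of (iv): for the $\CalH^2_0$ test function $(t,x,y,\xi_1,\xi_2)\mapsto |B_r|^{-1}\mathbbm 1_{|x-y|<r}(|\xi_1-\xi_2|^2\wedge M)\chi_M$, apply (iv) and let $M\to\infty$. Diagonal continuity of $\corrM$ itself then follows by combining (iii) with \eqref{eq: corr meas comp mod continuity}.

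\emph{Step 3 (strong convergence (v)).} This is the main obstacle, since a general $g\in\CalH^{k,2}_1$ grows like $|\xi|^2$ and is not an admissible $\CalH_0^k$ test function. The idea is to split $g=g_R+(g-g_R)$ where $g_R(t,x,\xi)=g(t,x,\xi)\,\chi_R(\xi)$ with $\chi_R\in C_c$ equal to $1$ on $B_R$. The truncated piece $g_R$ lies in $\CalH_0^k$, so weak-$*$ convergence from Step 1 handles it directly. For the tail $g-g_R$, use the domination $|g|\leq\sum_\alpha\varphi_{|\bar\alpha|}(t,x_{\bar\alpha})|\xi^\alpha|^2$ built into $\CalH^{k,2}$ together with consistency to bound the contribution by $R^{-\delta}$ times a quantity controlled by the uniform $L^2$-support bound $M$. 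The Lipschitz-in-$x$ term in the definition of $\CalH_1^{k,2}$ is absorbed using the diagonal continuity established in Step 2, while the Lipschitz-in-$\xi$ term lets one replace pointwise values by local averages and reduce to the weak-$*$ convergence already available on $\CalH_0^k$. Carefully combining these gives the $L^1_x$-convergence claimed in (v).

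\emph{Main obstacle.} Step 3 is where the real work lies: the bounded-support hypothesis is global (on $L^2$-norms of the underlying $\mu^n_t$) while the test class $\CalH^{k,2}_1$ requires control that is essentially pointwise in $(t,x,\xi)$. Converting the global $L^2$-bound into usable uniform tail and oscillation estimates via consistency and diagonal continuity is the delicate part of the argument.
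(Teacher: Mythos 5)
First, a remark on scope: the survey does not actually prove this theorem itself --- it is quoted from \cite{FLMW20} (the text later explicitly refers to ``the proof of \Cref{thm: corr meas comp thm}, given in \cite{FLMW20}''), so there is no in-paper proof to compare against and your attempt has to stand on its own. Your Steps 1 and 2 do: Banach--Alaoglu in the separable preduals $\CalH_0^k$ with a diagonal extraction over $k$, lower semicontinuity of non-negative integrands by monotone truncation (which delivers ii), iii), iv)), and Chebyshev-type tightness from the uniform second-moment bound \eqref{eq: corr meas comp thm finite l2 moment} to see that the limit marginals are genuine probability measures for which symmetry and consistency survive the limit. This is the standard route and is correct.

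The gap is in Step 3, and it is not a technicality. You claim that for the truncated integrand $g_R\in\CalH_0^k$ ``weak-$*$ convergence from Step 1 handles it directly.'' It does not: statement v) asserts $\int_{(\T^2)^k}\bigl|\int_0^T\langle\nu^k_{n,t,x}-\nu^k_{t,x},g_R\rangle\dt\bigr|\dx\to 0$, an $L^1_x$-norm of the time-integrated difference, whereas weak-$*$ convergence only gives convergence of the single space--time pairing $\int_0^T\int_{(\T^2)^k}\langle\nu^k_{n}-\nu^k,g_R\rangle\dx\dt$, without the absolute value inside the $x$-integral. These are genuinely different: the atomic marginals of $u_n(x)=\sin(nx)$ converge weakly-$*$, yet $\int_x|\langle\nu^1_{n,x}-\nu^1_x,\chi\rangle|\dx\not\to 0$ for generic $\chi\in C_0$. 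What excludes such oscillation in $x$ is precisely the uniform diagonal continuity \eqref{eq: corr meas comp mod continuity} (recall the paper's own remark that for atomic marginals it is equivalent to $L^2$-precompactness of $(u_n)$), so it must enter the proof of v) for the \emph{core} truncated term, not merely for the tail and Lipschitz error terms as in your sketch. The argument in \cite{FLMW20} mollifies $\nu^k_{n,t,x}$ in $x$, controls the mollification error uniformly in $n$ by pairing the $\CalH^{k,2}_1$ Lipschitz structure ($|\xi_i-\zeta_i|\max\{|\xi_i|,|\zeta_i|\}$) with $\omega_r^2(\nu_{n,t}^2)$ via Cauchy--Schwarz, and only then passes to the limit in the averaged quantities. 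Relatedly, your tail estimate is too optimistic: the support bound $\|u\|_{L^2}\le M$ does not make $\nu^k_{n,t,x}$ compactly supported pointwise in $x$, nor does it by itself give uniform integrability of $|\xi^\alpha|^2$ with respect to $\nu^k_{n,t,x}\dx$; one only obtains integrated Chebyshev tail bounds, and closing the tail again requires the diagonal continuity. As written, Step 3 does not establish v).
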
   

We remark that the basic assumptions \eqref{eq: foias comp thm assump2}, \eqref{eq: foias comp thm assump3} in \Cref{thm: foias comp thm} are, likewise to \eqref{eq: corr meas comp thm finite l2 moment} and \eqref{eq: corr meas comp mod continuity} in \Cref{thm: corr meas comp thm}, used to be able to pass to the limit in an appropriate preliminary way. However, in order to have convergence of the integrals in both types of Liouville equations \eqref{eq: Foias-Liouville equiv form} and \eqref{eq: moment based Liouville}, e.g., when considering the vanishing viscosity limit, one needs better results on convergence along certain observables. In \Cref{thm: foias comp thm}, the space $C_2(H)$ is specifically tailored towards the quadratic term $u \mapsto (u \otimes u,\nabla\Phi'(u))$, $\Phi \in \CalT$ and thereby one needs to make sure that ii) in  \Cref{thm: foias comp thm} is also satisfied. Similarly, all terms in \eqref{eq: moment based Liouville} are in $\CalH^{k,2}_1$ and v) in \Cref{thm: corr meas comp thm} is somewhat crucial.

More in detail: The assumptions \eqref{eq: foias comp thm assump2} and \eqref{eq: corr meas comp thm finite l2 moment} on finiteness of the second $\|\cdot\|_{L^2}$ related moments are equivalent.\\
The second assumption \eqref{eq: foias comp thm assump3} in \Cref{thm: foias comp thm} is stronger than \eqref{eq: corr meas comp mod continuity}. Indeed, \eqref{eq: foias comp thm assump3} implies \eqref{eq: corr meas comp mod continuity} as the following computation shows. Let $(\corrM_n)_{n\in\N} \subset \CalL^2([0,T],\T^2;\R^2)$ be a sequence of correlation measures satisfying \eqref{eq: corr meas comp thm finite l2 moment} with associated families of Borel probability measures $(\lbrace \mu_t^n\rbrace_{0 \leq t \leq T})_{n\in\N}$ satisfying \eqref{eq: foias comp thm assump1} and \eqref{eq: foias comp thm assump2}. Suppose that $(\lbrace \mu_t^n\rbrace_{0 \leq t \leq T})_{n\in\N}$ also satisfies \eqref{eq: foias comp thm assump3}, that is $C_2 \coloneqq \sup_{n\in\N}\int_0^T\int_H \|u\|_{H^1}^2\dd\mu_t^n(u)\dt < \infty$. On $L^2(\T^2;\R^2)$, we use the Fourier representation $u(x) = \sum_{k \neq 0}\hat{u}_k e^{ik\cdot x}$ for $u \in H$ and $x \in \T^2$. Then, some computations, the identity \eqref{eq: corr meas main thm identity}, the Plancherel theorem and the estimate $|1-e^{i\beta}| \leq \tilde{C}|\beta|$ for every $\beta \in \R$ and some constant $\tilde{C} > 0$ yield
\begin{equation}
\begin{split}
\int_0^T \omega_r^p(\nu_{n,t}^2)\dt &= \int_0^T \int_{\T^2}\avint_{B_r(x)} \langle \nu^2_{n,t,x,y}, |\xi_1-\xi_2|^2 \rangle\,\dd y\dx\dt\\
&= \int_0^T\int_{L^2}\int_{\T^2}\avint_{B_r(0)}|u(x)-u(x+h)|^2\,\dd h\dx\dd\mu_t^n(u)\dt\\
&= \int_0^T\int_{L^2}\int_{\T^2}\avint_{B_r(0)}\bigg|\sum_{k \neq 0}\hat{u}_ke^{ik\cdot x}(1- e^{ik\cdot h})\bigg|^2\,\dd h\dx\dd\mu_t^n(u)\dt\\
&\leq \int_0^T\int_{L^2} \avint_{B_r(0)} \sum_{k\neq 0}|\hat{u}_k|^2|1-e^{ik\cdot h}|^2\,\dd h\dd\mu_t^n(u)\dt\\
&\leq \int_0^T\int_{L^2} \sum_{k\neq 0}|\hat{u}_k|^2 (\tilde{C}|k|r)^2\dd\mu_t^n(u)\dt\\
&\leq \tilde{C}^2 r^2 C_2,
\end{split}
\end{equation}
which converges uniformly in $n$ to 0 in the limit $(r \to 0)$ so that \eqref{eq: corr meas comp mod continuity} holds.\\
The converse is false, however, as \eqref{eq: corr meas comp mod continuity} does not necessarily make any implications on the mean $H^1$ norm. For instance, if we consider a bounded sequence $(u_n)_{n\in\N}$ in $L^2(\T^2;\R^2)$, then the associated (constant in time) atomic correlation measures can be shown to satisfy \eqref{eq: corr meas comp mod continuity} if and only if $(u_n)_{n\in\N}$ is precompact in $L^2(\T^2;\R^2)$.\\
\hfill\\
However, comparing the third assumption \eqref{eq: foias comp thm assump4} in \Cref{thm: foias comp thm} and v) in \Cref{thm: corr meas comp thm} respectively, the much stronger integrability assumptions that were already needed in \Cref{def: stats sol moment based} of moment-based statistical solutions also come in again. Indeed, it does not seem possible in the proof of \Cref{thm: corr meas comp thm}, given in \cite{FLMW20}, to replace the assumption of a uniformly bounded support in $L^2(\T^2;\R^2)$ with something much milder such as \eqref{eq: foias comp thm assump4}.\\
\hfill\\

\subsubsection{Discrete Approximations Based on the Krein-Milman Theorem}\label{II}
Generalizing the existence result by Chae in \cite{C91p}, two of the authors recently applied in \cite{WW23} the second, here to be discussed, method of constructing statistical solutions by using discrete approximations based on the Krein-Milman theorem. This method appears to have been first presented in \cite{FMRT01} for the Navier-Stokes equations and was later elevated to a much more general and abstract setting in \cite{BMR16}. In our setting of the two-dimensional incompressible Euler equations, we can formulate the main existence result as follows, which incorporates the right assumptions in order for the approximation to work. We give a sketch of the arguments afterwards.

\begin{theorem}\label{thm: Krein-Milman approx}
Let $X$ be a measurable subset of $H$ and let $\CalU \subset C([0,T];H_w)$ consist of weak solutions of the Euler equations. Let $\overline{\mu}$ be a Borel probability measure on $H$, concentrated on $X$ and suppose that
\begin{enumerate}[A)]
\item $\Pi_0\CalU = X$,
\end{enumerate}
there exists a family $\CalK'(X)$ of compact subsets of $X$ such that
\begin{enumerate}[A)]
\setcounter{enumi}{1}
\item $\overline{\mu}$ is tight with respect to $\CalK'(X)$, i.e.
\[\forall A \in \CalB(H): \overline{\mu}(A) = \sup_{\substack{K \in \CalK'(X) \\ K \subset A}} \overline{\mu}(K),\]
\item For every $K \in \CalK'(X)$, $\Pi_0^{-1}(K) \cap \CalU$ is compact in $C([0,T];H_w)$.
\end{enumerate}
Then there exists a trajectory statistical solution $\mu$
of the Euler equations with initial distribution ${\Pi_0}_\sharp\mu = \overline{\mu}$.
\end{theorem}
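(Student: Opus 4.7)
I propose to adapt the Krein--Milman strategy of \cite{BMR16,WW23}: approximate $\overline{\mu}$ by finitely supported probability measures on $X$, lift each atom to a trajectory in $\CalU$ via assumption A), and pass to the limit on $C([0,T];H_w)$ using the compactness from C) together with Prokhorov's theorem. The first step is structural: using A), I fix once and for all a (not necessarily measurable) selection $U\colon X\to\CalU$ with $\Pi_0 U(u_0)=u_0$. Using B), I choose an increasing sequence $(K_n)_{n\in\N}\subset\CalK'(X)$ with $\overline{\mu}(K_n)\to 1$ (replacing $K_n$ by $K_1\cup\cdots\cup K_n$ if $\CalK'(X)$ is stable under finite unions; otherwise a diagonal argument suffices).

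The normalized restriction $\overline{\nu}_n:=\overline{\mu}(K_n)^{-1}\overline{\mu}|_{K_n}$ is a Borel probability measure on the Polish (compact) space $K_n$, and a routine dominated-convergence computation gives $\overline{\nu}_n\rightharpoonup\overline{\mu}$ weakly in $\mathcal{P}(H)$. Separability of $\mathcal{P}(K_n)$ in the weak topology allows me to approximate each $\overline{\nu}_n$ by finitely supported probability measures
\[\overline{\mu}_n^k \;=\; \sum_{i=1}^{m_{n,k}}\alpha_i^{n,k}\,\delta_{u_0^{n,k,i}},\qquad u_0^{n,k,i}\in K_n,\quad \overline{\mu}_n^k\rightharpoonup\overline{\nu}_n\ \text{as}\ k\to\infty.\]
Their lifts $\mu_n^k:=\sum_i\alpha_i^{n,k}\,\delta_{U(u_0^{n,k,i})}$ are probability measures on $C([0,T];H_w)$ with ${\Pi_0}_\sharp\mu_n^k=\overline{\mu}_n^k$ and $\supp\mu_n^k\subset \Pi_0^{-1}(K_n)\cap\CalU$. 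By condition C), the latter set is compact, so $\{\mu_n^k\}_k$ is tight and Prokhorov's theorem yields a weak subsequential limit $\nu_n$, still supported in $\Pi_0^{-1}(K_n)\cap\CalU$; continuity of $\Pi_0\colon C([0,T];H_w)\to H_w$ propagates the marginal identity to ${\Pi_0}_\sharp\nu_n=\overline{\nu}_n$.

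For the outer limit I establish tightness of $\{\nu_n\}_n$ on $C([0,T];H_w)$. Given $\epsilon>0$, pick $m$ with $\overline{\mu}(K_m)>1-\epsilon$; then for every $n\ge m$ the nestedness yields
\[\nu_n\bigl(\Pi_0^{-1}(K_m)\cap\CalU\bigr) \;=\; \overline{\nu}_n(K_m) \;=\; \frac{\overline{\mu}(K_m)}{\overline{\mu}(K_n)} \;>\; 1-\epsilon,\]
and $\Pi_0^{-1}(K_m)\cap\CalU$ is compact by C). A second application of Prokhorov produces a Borel probability measure $\mu$ on $C([0,T];H_w)$ as a weak subsequential limit. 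Passing to the limit in ${\Pi_0}_\sharp\nu_n=\overline{\nu}_n$ and using $\overline{\nu}_n\rightharpoonup\overline{\mu}$ yields ${\Pi_0}_\sharp\mu=\overline{\mu}$, giving ii) of Definition~\ref{def: stats sol trajectory}. The set $\CalV:=\bigcup_m (\Pi_0^{-1}(K_m)\cap\CalU)$ is Borel as a countable union of compacts, is contained in $\CalU$, and has $\mu(\CalV)=1$ by the above tightness estimate, giving i). Finally, the energy inequality \eqref{eq: energy ineq traj stats sols} follows by integrating the pointwise bound $\|u(t)\|_{L^2}^2\le\|u(0)\|_{L^2}^2$ (valid for every $u\in\CalU$ and every $t$, as noted after \eqref{eq: energy ineq EE}) against $\mu$ and using the marginal identity.

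\textbf{Expected obstacle.} The most delicate point is propagating tightness through the double limit in $k$ and $n$. This is only possible because the lift $U$ is fixed beforehand and does not depend on the approximation level, so that the supports of successive approximating trajectory measures nest inside the compact sets furnished by C). A secondary technical issue is the Borel measurability of the concentration set: rather than assuming $\CalU$ itself is Borel in $C([0,T];H_w)$, I exhibit $\CalV$ as an explicit countable union of compacts, which sidesteps the question entirely.
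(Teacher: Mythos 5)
Your argument is essentially the paper's: you discretize the initial distribution into finite convex combinations of Dirac measures (which is exactly the content of the Krein--Milman step), lift the atoms to trajectories via A), and extract a weak limit using the compactness of $\Pi_0^{-1}(K)\cap\CalU$ guaranteed by C) -- and Prokhorov's theorem on a compact metrizable support is the same extraction the paper performs via Banach--Alaoglu in $C(\kappa)'$. The only real difference is that you carry out explicitly the exhaustion by compacts $K_n$ and the resulting double limit that the paper's sketch compresses into the assertion that B) reduces matters to $\overline{\mu}$ carried by a single $K$; this is, if anything, closer to the detailed arguments in the cited references \cite{FRT13,BMR16}.
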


Condition A) just means that for every initial datum $u_0 \in X$, there exists a weak solution in $\CalU$ to the corresponding Cauchy problem. B) and C) mean that for a large enough family of compact sets of initial data, the weak solution trajectories in $\CalU$ with initial data in one of those sets are compact.\\ 
This theorem can roughly be proved as follows (cf. \cite{FMRT01,FRT13,BMR16}):
\begin{itemize}
\item Assumption B) allows one to reduce the proof to the case of $\overline{\mu}$ being carried by some $K \in \CalK'(X)$. Then we may view $\overline{\mu}$ as a measure on $K$.
\item The space $C(K)'$ along with the weak-* topology is a Hausdorff locally convex space and the unit ball $\mathbb{B}\ni \overline{\mu}$ is compact and convex.
\item Therefore, by the Krein-Milman theorem, $\mathbb{B}$ is equal to the closure of the convex hull of its extremal points, the latter being precisely the Dirac measures on $K$. In other words, for every $N \in \N$, there exist $n_N \in \N, \lambda_1^N,...,\lambda_{n_N}^N \geq 0$ and $u_{0,1}^N,...,u_{0,n_N}^N \in K$ such that
\[\sum_{i=1}^{n_N} \lambda_i^N = 1 \text{ and } \mu_{0,N} \coloneqq \sum_{i=1}^{n_N} \lambda_i^N\delta_{u_{0,i}^N} \overset{\ast}{\rightharpoonup} \overline{\mu}\,\quad (N \to \infty).\]
\item Now for each $N \in \N$, we define 
\[\mu_N \coloneqq \sum_{i=1}^{n_N} \lambda_i^N\delta_{u_i^N}\]
as Borel probability measures on $\kappa \coloneqq \Pi_0^{-1}(K) \cap \CalU$, where for each $N \in \N$ and $i = 1,...,n_N$, we let $u_i^N \in \kappa$ such that $u_i^N(0) = u_{0,i}^N$. This is possible due to A).
\item As $\kappa$ is compact, $C(\kappa)$ is separable and by the Banach-Alaoglu theorem, after passing to a subsequence, $(\mu_N)_{N\in\N}$ can be seen to converge weakly-* in $C(\kappa)'$ to some $\mu$, which, extended to a measure on $C([0,T];H_w)$, can be shown to be a trajectory statistical solution with ${\Pi_0}_{\sharp}\mu = \overline{\mu}$.
\item A corresponding phase space statistical solution can be obtained from \Cref{thm: traj stats sol -> phase space stats sol}.
\end{itemize}

It is apparent from this sketch that this theorem also holds in the better behaved setting of \Cref{thm: general existence thm stats sols}, when $H_w$ is replaced with $H$ or $H^1$.

\begin{proof}[Proof of \Cref{thm: general existence thm stats sols} (Sketch 2)]
In the setting of \Cref{thm: Krein-Milman approx}, let $X = H^1$ and $\CalU$ be the set of all weak solutions 
as in \Cref{prop: existence euler}, so that A) is immediately satisfied.
For properties B) and C), choose $\CalK'(X)$ to be the set of all subsets of $H^1$, compact w.r.t. $\|\cdot\|_{H^1}$.\\
Due to \eqref{eq: general ex thm, bdd mean enstrophy}, $\overline{\mu}$ is concentrated on $H^1$. Moreover, $\|\cdot\|_{H^1}$ and $\|\cdot\|_{L^2}$ generate the same Borel-$\sigma$-algebra on $H^1$ so that we can see $\overline{\mu}$ as a measure on $H^1$. Then B) follows from the general fact of Borel probability measures on Polish spaces being tight ~\cite[Theorem 3.2]{P67}.

Moving on to C), for every $K \in \CalK'(X)$, $\kappa \coloneqq \Pi_0^{-1}(K) \cap \CalU$ is in fact already compact in $C([0,T];H^1)$: $K$ being bounded in $H^1$ yields uniform bounds on the quantities involved in i) - iii) in \Cref{prop: existence euler} which in term implies uniform boundedness of $\kappa$ in $H^1$. Then the precompactness in $C([0,T];H)$ follows from iii) and the Aubin-Lions lemma.

Convergence in $C([0,T];H)$ is enough to conclude that limits of subsequences of velocities in $\CalU$ are once again weak solutions of the Euler equations and i) also holds in the limit. Similarly, from weak lower semicontinuity of the left-hand side in iii) and the strong convergence of the right-hand side, we may see that such limits of sequences in $\kappa$ also satisfy iii). 

As for property ii), due to compactness of $K$ in $H^1$, we may assume strong compactness of the initial vorticities in $\kappa$. We also obtain from ii) weak-* compactness of the vorticities of $\kappa$ in $L^\infty(0,T;L^2(\T^2))$ and combined with the compactness of the velocities in $C([0,T];H)$, this suffices to show that the vorticity of such a limit in $\kappa$ is a weak solution of the vorticity formulation \eqref{eq: vort equ} of the Euler equations. Now it suffices to note that weak vorticity solutions in $L^2(\T^2)$ are automatically renormalized and in fact, strong convergence of the vorticities holds in $C([0,T];L^2(\T^2))$ in the inviscid limit cf. \cite{LMN05,CCS21,NSW21}. Note that by a Calder{\'o}n-Zygmund argument, continuity and compactness in $L^2(\T^2)$ of the vorticity is enough to obtain continuity and compactness of the full gradient in $L^2(\T^2)$. 

In summary, $\kappa$ is compact in $C([0,T];H^1)$ and \Cref{thm: Krein-Milman approx} is applicable so that we obtain a trajectory statistical solution $\mu$ on $\CalU \subset C([0,T];H^1)$ with initial distribution $\overline{\mu}$. In combination with \Cref{thm: traj stats sol -> phase space stats sol}, one obtains the phase space statistical solutions $\lbrace \mu_t \rbrace_{0\leq t \leq T} = \lbrace {\Pi_t}_\sharp\mu\rbrace_{0\leq t \leq T}$ with initial distribution $\overline{\mu}$. In particular, the energy inequality is satisfied due to
\begin{align*}
\int_{H^1} \|u\|_{L^2}^2\dd\mu_t(u) &= \int_{C([0,T];H^1)}\|u(t)\|_{L^2}^2\dd\mu(u)\\
& \leq \int_{C([0,T];H^1)}\|u(0)\|^2_{L^2}\dd\mu(u) = \int_{C([0,T];H^1)}\|u_0\|_{L^2}^2\dd\overline{\mu}(u_0).
\end{align*}    
\end{proof}

\begin{remark}\label{rem: enstrophy balance}
The same argument in Sketch 2 that showed the energy inequality can also be used to show the conservation of enstrophy, i.e. the phase space statistical solution satisfies
\begin{equation}\label{eq: enstrophy balance statistical}
\int_{H^1} \|\omega(u)(t)\|_{L^2}\dd\mu_t(u) = \int_{H^1} \|\omega(u_0)\|_{L^2}\dd\overline{\mu}(u_0).
\end{equation}
\end{remark}

\subsubsection{Measurable Selections and Push-Forward Constructions}
The third method that we very briefly review here is the construction of pushforward measures of initial distributions along a measurable selection operator of the possibly non-unique weak solutions. This is perhaps the most straightforward way in which one can construct statistical solutions and ties in with the derivation of the notion of statistical solution, as discussed in the introduction, where we also assumed the existence of a solution operator.

This method was employed by Ladyzhenskaya and Vershik in \cite{LV78} to solve the Hopf statistical equation in coordinate form.\\
More precisely, they applied a version of the now classical Kuratowski and Ryll-Nardzewski selection principle to the three-dimensional Navier-Stokes equations. Here, we would like to state a more recent and refined result due to Cardona and Kapitanski on semiflow selections in~\cite{CK20}, where also the example of the Navier-Stokes equations is discussed. Their work has also inspired the recent articles by Breit, Feireisl and Hofmanov{\'a} on semiflow selection for the isentropic and complete Euler system \cite{BFH19,BFH20} and this selection and push-forward principle has then also been used by Fanelli and Feireisl to show existence of a newly introduced concept of statistical solutions to the barotropic Navier-Stokes system \cite{FF20}.

For every $s \geq 0$, the splicing at time $s$ of two functions $v,w$ on $[0,\infty)$ with $w(s) = v(0)$ is defined as 
\[(w \bowtie_s v)(t) = \begin{cases}
w(t), & 0 \leq t \leq s\\
v(t-s), & t \geq s
\end{cases}.\]
We also define the time shifts $\theta_s$ for every $s \geq 0$ so that $\theta_sw(t) = w(t+s)$ for every function $w$ on $[0,\infty)$ and every $t \in [0,\infty)$.

\begin{theorem}\label{thm: meas semiflow selection}
Let $X$ be a Polish space, $C_{loc}([0,\infty);X)$ be the space of continuous functions endowed with the compact-open topology and for $x \in X$, denote by $\Omega_x$ the subset of trajectories in $C_{loc}([0,\infty);X)$ starting at $x$. Suppose that $\Psi$ is a mapping from $X$ onto the non-empty subsets of $C_{loc}([0,\infty);X)$ satisfying the following properties:
\begin{enumerate}[a)]
\item For every $x \in X$, $\Psi(x)$ is a compact subset in $\Omega(x)$.
\item $\Psi$ is measurable in the sense that for every closed set $A \subset \Omega$, $\lbrace x \in X : \Psi(x) \cap A \neq \emptyset\rbrace$ is a Borel measurable subset of $X$.
\item $\Psi$ is compatible with the semigroup $\theta_s$ in the sense that for every $s \geq 0, x \in X$, we have
\[w \in \Psi(x) \Rightarrow \theta_s(w)\in \Psi(w(s)).\]
\item $\Psi$ satisfies a glueing property in the sense that for $w \in \Psi(x)$ and $v \in \Psi(w(s))$ for some $s \in [0,\infty)$, also $w \bowtie_s v \in \Psi(x)$. 
\end{enumerate}
Then there exists a measurable semiflow selection $\psi$ of $\Psi$, i.e., $\psi$ is a Borel measurable mapping from $X$ to $C_{loc}([0,\infty);X)$ so that $\psi(x) \in \Psi(x)$ for every $x \in X$ and it has the semiflow property $\psi(x,t+s) = \psi(\psi(x,t),s)$ for all $t,s \geq 0$.
\end{theorem}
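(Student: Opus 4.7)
The plan is to follow the iterated-selection strategy pioneered by Krylov in the stochastic setting and adapted to deterministic semiflows by Cardona--Kapitanski: one iteratively shrinks $\Psi(x)$ by selecting minimizers of a countable family of exponentially weighted functionals until the resulting selection is single-valued, while preserving axioms a)--d) at every step.

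First, I would fix a countable family $\mathcal{F} \subset C_b(X)$ that separates points of $X$, which exists because $X$ is Polish (e.g., $x \mapsto d(x,x_n) \wedge 1$ along a countable dense set $\{x_n\}$). For $F \in \mathcal{F}$ and $\lambda > 0$ I set
\[
I_{F,\lambda}(w) \coloneqq \int_0^\infty e^{-\lambda t} F(w(t)) \dt,
\]
which is continuous in the compact-open topology by dominated convergence and bounded by $\|F\|_\infty/\lambda$. The key calculation is the following: for $w \in \Psi(x)$ and $v \in \Psi(w(s))$, axiom d) gives $w \bowtie_s v \in \Psi(x)$, and a direct computation yields
\[
I_{F,\lambda}(w \bowtie_s v) = \int_0^s e^{-\lambda t} F(w(t)) \dt + e^{-\lambda s} I_{F,\lambda}(v).
\]
From this one concludes that if $w$ minimizes $I_{F,\lambda}$ over $\Psi(x)$ then $\theta_s w$ must minimize $I_{F,\lambda}$ over $\Psi(w(s))$; conversely, a splicing of two minimizers is again a minimizer. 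This single identity is what drives the whole scheme.

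Next, I would enumerate $\mathcal{F} \times (\mathbb{Q} \cap (0,\infty))$ as $\{(F_n,\lambda_n)\}_{n \in \N}$, set $\Psi_0 \coloneqq \Psi$, and recursively define
\[
\Psi_{n+1}(x) \coloneqq \argmin_{w \in \Psi_n(x)} I_{F_n,\lambda_n}(w).
\]
By compactness of $\Psi_n(x)$ and continuity of $I_{F_n,\lambda_n}$, each $\Psi_{n+1}(x)$ is a non-empty compact subset of $\Omega_x$, and axioms c) and d) are inherited by induction through the displayed identity. The more subtle point is axiom b): the argmin of a continuous functional over a Borel-measurable compact-valued multifunction on a Polish space is again Borel measurable, via the projection theorem for analytic sets applied to its graph. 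Now put $\Psi_\infty(x) \coloneqq \bigcap_n \Psi_n(x)$; this is non-empty by the finite intersection property inside the compact set $\Psi(x)$. It is in fact a singleton: for $w_1, w_2 \in \Psi_\infty(x)$ one has $I_{F,\lambda}(w_1) = I_{F,\lambda}(w_2)$ for every $F \in \mathcal{F}$ and every $\lambda \in \mathbb{Q} \cap (0,\infty)$, so uniqueness of the Laplace transform applied to the continuous bounded maps $t \mapsto F(w_i(t))$ gives $F(w_1(t)) = F(w_2(t))$ for all $t \geq 0$, and the separation property of $\mathcal{F}$ forces $w_1 \equiv w_2$.

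Finally, let $\psi(x)$ denote the unique element of $\Psi_\infty(x)$. Borel measurability of $\psi$ follows from Kuratowski--Ryll-Nardzewski applied to the measurable compact-valued multifunction $\Psi_\infty$, combined with the fact that a single-valued Borel multifunction is a Borel function. The semiflow identity $\psi(x,t+s) = \psi(\psi(x,t),s)$ is then immediate from axiom c) for $\Psi_\infty$: indeed $\theta_t \psi(x) \in \Psi_\infty(\psi(x,t))$, which has only one element. The hardest step in this plan will be propagating Borel measurability under the argmin operation -- this rests on classical but non-trivial machinery (measurable selection theorems, the projection theorem for analytic sets, and measurability of the value function $x \mapsto \inf_{\Psi_n(x)} I_{F_n,\lambda_n}$). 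Once that measurability bookkeeping is in place, everything else reduces to the displayed splicing identity and Laplace-transform uniqueness.
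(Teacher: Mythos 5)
Your proposal is correct and is precisely the Krylov-style iterated-minimization scheme (countable family of exponentially weighted functionals, the splicing identity, nested compact argmins, Laplace-transform uniqueness) that Cardona and Kapitanski use in \cite{CK20}; the paper itself states this theorem without proof and simply cites that reference. The only step requiring genuine care is, as you note, propagating Borel measurability of the compact-valued multifunctions through the argmin operation, and the machinery you invoke (measurability of the value function plus measurable selection for compact-valued maps) is exactly what closes that gap in the cited work.
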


\begin{proof}[Proof of \Cref{thm: general existence thm stats sols} (Sketch 3)]
We consider $X = H^1$ and for any $u_0 \in H^1$, we let $\Psi(u_0) \subset C_{loc}([0,\infty);H^1)$ be the set of all weak solutions with properties i) - iii) in \Cref{prop: existence euler} and initial datum $u_0$.

The assumptions a)-d) in \Cref{thm: meas semiflow selection} are satisfied for similar or identical reasons as in our second sketch of the proof of \Cref{thm: general existence thm stats sols}, more precisely:

$\Psi(u_0)$ is non-empty for every $u_0 \in H^1$ by \Cref{prop: existence euler}. Moreover, $\Psi(u_0)$ is compact in $C_{loc}([0,\infty);H^1)$, as we already saw in Sketch 2 for finite time intervals and from which we may obtain the global case by a diagonal argument. 

As for b), for any closed set $A \subset C_{loc}([0,\infty);H^1)$, one can also prove that $\lbrace u_0 \in H^1 : \Psi(u_0) \cap A \neq \emptyset \rbrace$ is closed in $H^1$ by employing the same arguments.

For c) and d), all of the properties of weak solutions with the above properties are preserved for time-shifts and splicings, as can be seen rather immediately, perhaps with exception of the weak velocity and renormalized vorticity formulation of the Euler equations. This last point can be proved by approximating a given test function appropriately as for instance in \cite[Section 4.3]{CK20} in the case of the Navier-Stokes equations.

In summary, \Cref{thm: meas semiflow selection} is applicable and we may consider a measurable selection $S$ of $\Psi$. Due to its measurability, we may consider the pushforward measure $\mu \coloneqq S_\sharp\overline{\mu}$ on $C_{loc}([0,\infty);H^1)$ so that $\mu$ is a trajectory statistical solution with initial distribution $\overline{\mu}$. Using \Cref{thm: traj stats sol -> phase space stats sol}, one obtains a phase-space statistical solution as desired in \Cref{thm: general existence thm stats sols} also satisfying the energy inequality \eqref{eq: energy inequ construction} everywhere, as can be shown as in our second sketch.
\end{proof}

\begin{remark}
Similarly to \Cref{rem: enstrophy balance}, the statistical solution constructed here also satisfies the enstrophy balance \eqref{eq: enstrophy balance statistical}.
\end{remark}

\subsubsection{Conclusions}
We close this subsection by remarking that all three methods that we described here have in common that they actually rely in some form on compactness properties of weak deterministic solutions and/or approximations thereof: For instance in the two-dimensional case when considering the vanishing viscosity limit for initial data with vorticity in $L^2$, the uniform bounds \eqref{eq: foias comp thm assump2} and \eqref{eq: foias comp thm assump3} in \Cref{thm: foias comp thm} are the analogues of the uniform estimates which in the deterministic case allow one to pass to the limit, and are therefore also satisfied by the pushforward approximations along the two-dimensional Navier-Stokes solution operators.

In Subsection~\ref{II}, the compactness of weak solutions with initial data in certain compact sets is precisely assumption C) in \Cref{thm: Krein-Milman approx}.
Similarly, also in \Cref{thm: meas semiflow selection} the compactness of solution trajectories sharing a common initial datum is assumed, though we mention here that for the classical Kuratowski and Ryll-Nardzewski selection theorem, closedness would be enough.

Overall, this close connection to existence and compactness of weak deterministic solutions yields at least that the constructed statistical solutions typically satisfy similar properties.

While for any $u_0 \in H$, infinitely many weak solutions of the incompressible Euler equations, possibly even satisfying the energy inequality, may be constructed \cite{DLS08,W11}, the lack of compactness of these convex integration solutions makes it impossible at the moment to construct statistical solutions of the incompressible Euler equations with the presented tools for any initial distribution $\overline{\mu}$ on $H$ without any further assumptions such as \eqref{eq: general ex thm, bdd mean enstrophy}.

\subsection{Comparison to Measure-Valued Solutions}\label{Comparison to measure valued solutions}

\subsubsection{Statistics of Singular Limits}
The formulation of statistical solutions in terms of correlation measures allows for an alternative, somewhat simpler interpretation by considering all infinitely many multipoint correlations instead of a single distribution on an infinite dimensional space. In contrast, measure-valued solutions only describe the one-point statistics of a system.

One would hope that by ``adding'' (all) multipoint correlations to Young measures and measure-valued solutions, results on uniqueness and stability would be easier to achieve. This, however, is different from the context in which DiPerna and Majda introduced the notion of measure-valued solution in \cite{DM87osc} to treat the inviscid limit problem, cf.~\Cref{sec: mvs}. Correlation measures as defined in \Cref{subsec: definitions} appear to be too strong of a concept to study oscillatory limits.

As an easy example, consider the function $u\colon \T \to \R$, given by
\[ u(x) = 
\begin{cases}
0, & 0 < x \leq \frac{1}{2}\\
1, & \frac{1}{2} < x \leq 1
\end{cases}
\]
and extended periodically. Then we define $u_n(x) \coloneqq u(nx)$ for all $x \in \T$ and $n \in \N$. The two-point correlations of $(u_n)_{n\in\N}$ in the limit are described by the Young measure $\lbrace \nu_{x,y}^2 \rbrace_{(x,y) \in \T^2}$ generated by $v_n(x,y) \coloneqq (u_n(x),u_n(y)), n \in \N, (x,y) \in \T^2$. It is not hard to see that, constantly in $(x,y) \in \T^2$,
\[\nu_{x,y}^2 \equiv \frac{1}{4}\delta_{(0,0)} + \frac{1}{4}\delta_{(0,1)} + \frac{1}{4}\delta_{(1,0)} + \frac{1}{4}\delta_{(1,1)}.\]
However, this constant Young measure cannot be part of a correlation measure as the diagonal continuity \eqref{eq: diag cont} is violated since we have, constantly in $r > 0$,
\[\int_{\T^2}\avint_{B_r(x)} \langle \nu_{x,y}^2, |\xi_1-\xi_2|^2\rangle\dd y\dx = \frac{1}{2} > 0.\]
On a related note, it is possible, however, to associate Borel probability measures to (generalized) Young measures and also to generalize the fundamental theorem of Young measures to obtain a notion of generating Young measures by sequences of Borel probability measures, cf.~\cite{C91g}.

\subsubsection{Weak-Strong Uniqueness}
We mentioned before that one would hope to obtain better stability and uniqueness results for statistical solutions compared to measure-valued solutions. In this regard, much is yet to be desired.

If the initial data in the support of an initial distribution yields unique solutions in a certain class, it is usually quite simple to show uniqueness of trajectory statistical solutions, see for instance \cite[Theorem 2.6]{WW23} for the case of the two-dimensional Euler equations with initial data and solutions in the Yudovich class or, more generally,~\cite[Chapter IV §6]{VF88}.

For phase space statistical solutions, similar uniqueness results are much harder to achieve. In case of the two-dimensional Navier-Stokes equations, see for instance Theorem 1.2 in Chapter V of \cite{FMRT01}. In \cite{LMP21}, a weaker kind of weak-strong uniqueness was shown, building upon weak-strong uniqueness of measure-valued solutions of the Euler equations, cf.~\Cref{thm: weak-strong mvs}. As it fits nicely to the content of this article and for the convenience of the reader, we present it here but only sketch the proof of the main result. The following holds in both $d = 2$ and $d = 3$ dimensions.\\
By \textit{strong} in this context of weak-strong uniqueness, we mean that we consider statistical solutions with an initial distribution that is concentrated on a (slightly larger) set of initial data that yields classical solutions on the considered time interval.

More precisely, we consider the set $\CalC \subset C^1(\T^d;\R^d) \cap H$ of initial data $\overline{v}$ which yields classical solutions of the Euler equations $v \in C^1(\T^2 \times [0,T];\R^2)$ so that $C(\overline{v}) \coloneqq \sup_{t \in [0,T]} \|\nabla v(t)\|_{\infty} < \infty$. Now, for every $n \in \N$, let 
\[\CalG_n = \bigcup_{\overline{v} \in \CalC} B^{H}_{e^{-C(\overline{v})T}/n}(\overline{v}) \text{ and define } \CalG = \bigcap_{n\in\N}\CalG_n.\] 
To be able to build on the weak-strong uniqueness of measure-valued solutions, we need to use the notion of dissipative statistical solutions. Before giving the precise definition, let us introduce the following notation: For a Borel probability measure $\mu$ on $H$ as in \Cref{thm: corr measures main thm}, we denote by $\nu^1(\mu) = \lbrace \nu_x^1(\mu)\rbrace_{x \in \T^d}$ the first Young measure in the hierarchy of Young measures that constitute the correlation measure associated to $\mu$. Also, for such measures $\mu$ and $\alpha = (\alpha_1,...,\alpha_N) \in [0,1]$ such that $\sum_{i=1}^N \alpha_i = 1$, we consider the set $\Lambda(\alpha,\mu) \coloneqq \lbrace (\mu_1,...,\mu_N) : \mu = \sum_{i=1}^N \alpha_i \mu_i\rbrace$ of $N$-tuples of Borel probability measures on $H$. Note that always $(\mu,...,\mu) \in \Lambda(\alpha,\mu)$ so that this set is never empty.

\begin{definition}\label{def: dissipative stats sol}
A statistical solution $\lbrace \mu_t \rbrace_{0 \leq t < T}$ in phase space of the Euler equations with initial distribution $\overline{\mu}$ is called {\em dissipative statistical solution} of the Euler equations if for every $N \in \N$, $\alpha \in [0,1]^N$ satisfying $\sum_{i=1}^N \alpha_i = 1$ and $(\overline{\mu}_1,...,\overline{\mu}_N) \in \Lambda(\alpha,\overline{\mu})$, there exist $(\hat{\mu}_{1,t},...,\hat{\mu}_{N,t}) \in \Lambda(\alpha,\mu_t)$ for almost every $0 \leq t \leq T$ such that for each $i = 1,...,N$, $\lbrace \nu^1(\hat\mu_{i,t}) \rbrace_{0 \leq t \leq T}$ is an oscillation measure-valued solution of the incompressible Euler equations with initial Young measure $\nu^1(\overline{\mu}_i)$. 
\end{definition}

\begin{theorem}[\cite{LMP21}]\label{thm: weak-strong unique stats sols}
Let $\overline{\mu}$ be a Borel probability measure on $H$ which is concentrated on $\CalG$ and has bounded support in $B_M^H$ for some $M > 0$. Then there exists a unique dissipative statistical solution of the incompressible Euler equations with initial distribution $\overline{\mu}$.
\end{theorem}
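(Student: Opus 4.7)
The plan is to show that both existence and uniqueness amount to the same explicit object, namely the pushforward $(S_t)_{\sharp}\overline\mu$ of $\overline\mu$ along a canonical extension $S_t$ of the classical solution operator from $\CalC$ to $\CalG$. To construct $S_t$ on $\CalG$, fix $u_0\in\CalG$ and for each $n$ pick $\overline v_n\in\CalC$ with $\|u_0-\overline v_n\|_{H}<e^{-C(\overline v_n)T}/n$; let $v_n\in C^1([0,T]\times\T^d)$ denote the classical solution launched from $\overline v_n$. Choosing as strong reference the one of $v_n,v_m$ with the smaller Lipschitz constant, Theorem~\ref{thm: weak-strong mvs} applied to the Dirac measure-valued solution at the other one, combined with the shrinkage $e^{-C(\overline v)T}/n$ built into $\CalG_n$, yields $\|v_n(t)-v_m(t)\|_{L^2}\le C(n^{-1}+m^{-1})$. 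Hence $(v_n)_n$ is Cauchy in $C([0,T];L^2(\T^d))$; call the limit $S_{\cdot}u_0$. The same estimate makes the limit independent of the approximating sequence, so $S_{\cdot}u_0$ is a weak solution with initial datum $u_0$, and $u_0\mapsto S_{\cdot}u_0$ is Borel measurable.

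For existence, set $\mu_t:=(S_t)_{\sharp}\overline\mu$. The axioms of Definition~\ref{def: stats sol phase space} hold trajectorywise from the pointwise weak formulation and the energy inequality. To verify the dissipative property, for any convex decomposition $\overline\mu=\sum_i\alpha_i\overline\mu_i$ take $\hat\mu_{i,t}:=(S_t)_{\sharp}\overline\mu_i$: then
\[\nu^1(\hat\mu_{i,t})_x=\int_H\delta_{S_tu_0(x)}\,d\overline\mu_i(u_0)\]
is a superposition of Dirac oscillation measure-valued solutions, hence itself an oscillation measure-valued solution with initial Young measure $\nu^1(\overline\mu_i)$.

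For uniqueness, let $\{\tilde\mu_t\}$ be any dissipative statistical solution with initial distribution $\overline\mu$. For each $n$, the conditions $\overline\mu(\CalG_n)=1$ and $\supp\overline\mu\subset B_M^H$, together with separability of $H$, produce a countable measurable partition giving a convex decomposition $\overline\mu=\sum_k\alpha_{n,k}\overline\mu_{n,k}$ with each $\overline\mu_{n,k}$ supported in $B^H_{r_{n,k}}(\overline v_{n,k})$, $\overline v_{n,k}\in\CalC$, $r_{n,k}=e^{-C(\overline v_{n,k})T}/n$. By hypothesis $\tilde\mu_t=\sum_k\alpha_{n,k}\hat\mu_{n,k,t}$ with $\nu^1(\hat\mu_{n,k,t})$ an oscillation measure-valued solution having initial Young measure $\nu^1(\overline\mu_{n,k})$. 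Applying Theorem~\ref{thm: weak-strong mvs} against the classical reference $v_{n,k}$, whose initial relative energy is bounded by $r_{n,k}^2/2$, yields
\[\int_H\|u-v_{n,k}(t)\|_{L^2}^2\,d\hat\mu_{n,k,t}(u)\le n^{-2}\]
for each $k$. Summing with weights $\alpha_{n,k}$ shows that $\tilde\mu_t$ is within $O(n^{-1})$ in $2$-Wasserstein distance of the discrete pushforward $(\tilde S_t^n)_{\sharp}\overline\mu$, where $\tilde S_t^n u_0:=v_{n,k}(t)$ on the $k$-th cell. Since $\tilde S_t^n\to S_t$ pointwise $\overline\mu$-a.e.\ as $n\to\infty$, the bounded convergence theorem forces $\tilde\mu_t=(S_t)_{\sharp}\overline\mu=\mu_t$ for a.e.\ $t$.

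The main obstacle is arithmetic rather than conceptual: the weak-strong growth factor $e^{CT\|\nabla v\|_\infty}$ supplied by Theorem~\ref{thm: weak-strong mvs} must be neutralised by the shrinkage $e^{-C(\overline v)T}$ in the definition of $\CalG_n$, since the Lipschitz constants of the approximating classical solutions $v_{n,k}$ are not uniformly bounded across $k$. This calibration is precisely why $\CalG$ is defined through radii depending on each individual reference, and why the bounded-support hypothesis on $\overline\mu$ is imposed (to control the energies, though not the gradients, uniformly). A secondary technical point is the measurable extraction of the partition $\{\overline\mu_{n,k}\}$ with the prescribed ball-support property, for which openness of $\CalG_n$ in $H$ together with separability of $H$ suffice.
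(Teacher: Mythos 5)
Your argument is correct in substance and rests on the same mechanism as the paper's proof: the radii $e^{-C(\overline v)T}$ built into $\CalG_n$ are calibrated exactly to cancel the growth factor in Theorem~\ref{thm: weak-strong mvs}, the covering of $\CalG$ by small balls centred at classical data is the same, and uniqueness is obtained cell by cell from weak--strong uniqueness of measure-valued solutions together with a $2$-Wasserstein estimate. The one genuine organizational difference is in the existence part: you extend the classical solution operator to an operator $S_t$ defined on all of $\CalG$ (via the Cauchy property of the approximating classical solutions) and exhibit the solution as the pushforward $(S_t)_\sharp\overline\mu$, whereas the paper never names a solution operator and instead shows that the discrete measures $\mu_t^{(n)}=\sum_i\alpha_i^{(n)}\delta_{u_i(t)}$ are uniformly Cauchy in $d_2$ and passes to the limit. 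Your version is a little more explicit and makes the uniqueness step read as ``every dissipative statistical solution equals the pushforward,'' at the modest extra cost of checking measurability of $S_t$; this does follow from your stability estimate, since it gives $\|S_tu_0-S_tu_0'\|_{L^2}\le 2/n$ once $u_0'$ is sufficiently close to $u_0$, so $S_t$ is in fact continuous on $\CalG$, but you should say so rather than merely assert Borel measurability.

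One step needs repair. Definition~\ref{def: dissipative stats sol} only provides compatible decompositions $(\hat\mu_{1,t},\dots,\hat\mu_{N,t})$ for \emph{finite} convex splittings of $\overline\mu$, while your partition of $\CalG$ is countable, so you cannot directly write $\tilde\mu_t=\sum_k\alpha_{n,k}\hat\mu_{n,k,t}$ over all $k$ and ``sum with weights $\alpha_{n,k}$.'' The fix is the paper's truncation: keep the first $N$ cells, lump the tail mass $\alpha_0^{(N,n)}=\sum_{k>N}\alpha_{n,k}$ into one additional component of the decomposition, and use the hypothesis $\supp\overline\mu\subset B_M^H$ to bound the tail's contribution to the squared Wasserstein distance by $M^2\alpha_0^{(N,n)}$, which is made small by choosing $N$ large after $n$. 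This is precisely where the bounded-support assumption enters; your closing remark attributes it to ``controlling the energies uniformly,'' which is the right instinct, but the proof body should actually execute this truncation. With that adjustment the argument goes through.
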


\begin{remark}
\begin{enumerate}[i)]
\item The definition in \cite{LMP21} of dissipative statistical solutions also contains a property of (weak) time-regularity. This, however, is not required for \Cref{thm: weak-strong unique stats sols} and is consequently omitted in \Cref{def: dissipative stats sol}.
\item \Cref{thm: weak-strong unique stats sols} in particular yields existence and uniqueness of dissipative statistical solutions in the two-dimensional case for initial distributions concentrated on $C^{1,\beta}(\T^2;\R^2)\cap H$ for some $\beta \in (0,1)$ and of bounded support.
\end{enumerate}
\end{remark}

\begin{proof}[Proof of \Cref{thm: weak-strong unique stats sols} (sketch)]
\textit{Partitions and approximations:} We begin by introducing partitions of $\CalG$.\\
Since $H$ is separable, countably many open balls in the definition of $\CalG_n$ suffice to cover $\CalG_n$ for every $n \in \N$. By considering the countable union over $n \in \N$ of the countably many centers of balls that cover $\CalG_n$, we obtain a countable set $\{ \overline{u}_i\}_{i\in I}$ (for simplicity assume $I = \N$) such that $\CalG_n = \bigcup_{i=1}^\infty B_{r_i/n}(\overline{u}_i)$ for every $n \in \N$, where $r_i \coloneqq e^{-C(\overline{u}_i)T}$ for every $i \in \N$. Then we partition $\CalG$ as follows: Let
\begin{align*}
\left\{
\begin{aligned}
S_1^{(n)} &= B_{r_i/n}(\overline{u}_i) \cap \CalG\\
\Sigma_1^{(n)} &= S_1^{(n)}\\
\alpha_i^{(n)} &= \overline{\mu}(S_1^{(n)})
\end{aligned}\right.\qquad
\left\{\begin{aligned}
S_{i+1}^{(n)} &= B_{r_{i+1}/n}(\overline{u}_{i+1})\setminus\Sigma_i^{(n)}\\
\Sigma_{i+1}^{(n)} &= \Sigma_i^{(n)} \cup S_{i+1}^{(n)}\\
\alpha_{i+1}^{(n)} &= \overline{\mu}(S_{i+1}^{(n)}).
\end{aligned}\right.
\end{align*}
Then $(S_i^{(n)})_{i=1}^\infty$ is a disjoint partition of $\CalG$ and $\sum_{i=1}^\infty \alpha_i^{(n)} = 1$.

\textit{Existence:} For all $i,n \in \N$, define $\overline{\mu}_i^{(n)} \coloneqq \overline{\mu}_i(\cdot \cap S_i^{(n)})$ so that $\overline{\mu} = \sum_{i=1}^\infty \alpha_i^{(n)}\overline{\mu}_i^{(n)}$ for every $n \in\N$. As $\overline{\mu}_i^{(n)} \approx \delta_{\overline{u}_i}$ for large $n$, we consider the discrete approximations
\[\overline{\mu}^{(n)} \coloneqq \sum_{i=1}^\infty \alpha_i^{(n)}\delta_{\overline{u}_i}, n \in \N,\]
of $\overline{\mu}$.

For every $i \in \N$, let $u_i \in C^1(\T^2 \times [0,T];\R^2)$ be the classical solution with initial data $\overline{u}_i$. Then we consider the time parametrized measures 
\[
\mu_t^{(n)} \coloneqq \sum_{i=1}^\infty \alpha_i^{(n)}\delta_{u_i(t)}, n \in \N, t \in [0,T].
\]
Then $\{\mu_t^{(n)}\}_{0 \leq t \leq T}$ can be seen to be a dissipative statistical solution with initial distribution $\overline{\mu}^{(n)}$ for every $n \in \N$.

Based on stability of classical solutions, which also motivated the specific choice of radii $r_i, i \in \N$, one can show that $(\{\mu_t^{(n)}\}_{0\leq t \leq T})_{n\in\N}$ is uniformly Cauchy with respect to the Wasserstein  distance $d_2$. Due to completeness of the Wasserstein space, one obtains measures $\lbrace \mu_t \rbrace_{0 \leq t \leq T}$ in the uniform limit which also form a dissipative statistical solution with initial distribution $\overline{\mu}$.

\textit{Uniqueness:} For the proof of uniqueness, suppose that $\lbrace \tilde{\mu}_t\rbrace_{0 \leq t \leq T}$ is also a dissipative statistical solution with initial distribution $\overline{\mu}$. Due to Step 2, it suffices to show that $\lim_{n \to \infty} d_2(\tilde{\mu}_t, \mu_t^{(n)}) = 0$ for all $0 \leq t \leq T$.

We now fix $n \in \N$. For large $N \in \N$, approximate $\mu_t^{(n)}$ by 
\[\tilde{\mu}_t^{(N,n)} \coloneqq \sum_{i=1}^N \alpha_i^{(n)}\delta_{u_i(t)} + \alpha_0^{(N,n)}\delta_{\CalO(t)},\]
where $\CalO$ denotes the constant zero function and $\alpha_0^{(N,n)} \coloneqq \sum_{i > N} \alpha_i^{(n)}$ so that $\tilde{\mu}_t^{(N,n)}$ is a probability measure. If $\alpha_0^{(N,n)}$ is $0$, then the zeroth terms can be omitted in the following. We then have $\alpha_0^{(N,n)} + \sum_{i=1}^N \alpha_i^{(n)} = 1$ and by letting $\overline{\mu}_0^{(N,n)} \coloneqq \frac{1}{\alpha_0^{(N,n)}}\sum_{i > N}\alpha_i^{(n)}\overline{\mu}_i^{(n)},$ it follows that 
\[(\overline{\mu}^{(N,n)}_0,\overline{\mu}^{(n)}_1,...,\overline{\mu}^{(n)}_N) \in \Lambda(\alpha,\overline{\mu}).\]
The definition of dissipative statistical solutions implies the existence of partitions
\[(\hat{\mu}_{0,t},...,\hat{\mu}_{N,t}) \in \Lambda(\alpha,\tilde{\mu}_t)\]
of $\tilde{\mu}_t$ such that for each $i = 1,...,N$, $\{\nu^1(\hat{\mu}_{i,t})\}_{0 \leq t < T}$ is a measure-valued solution starting at $\nu^1(\overline{\mu}_i^{(n)})$. Then the analogue of \Cref{thm: weak-strong mvs} in the incompressible case yields
\begin{equation}
\begin{split}
&\int_H \|u - u_i(t)\|_{L^2}^2\dd\hat{\mu}_{i,t}(u) = \int_{\T^2} \langle \nu_{x}^1(\hat{\mu}_{i,t}),|\xi - u_i(t)|^2\rangle\dx\\
\leq& e^{C(\overline{u}_i)T}\int_{\T^2}\langle\nu_x^1(\overline{\mu}_i),|\xi - \overline{u}_i|^2\rangle\dx = \frac{1}{r_i}\int_H \|u - \overline{u}_i\|_{L^2}^2\dd\overline{\mu}_i(u)
\end{split}
\end{equation}
for a.e. $t \in [0,T]$ and $i = 1,...,N$.

The left-hand side can be rewritten using the transport plan $\pi_t \coloneqq \hat{\mu}_{i,t} \otimes \delta_{u_i(t)}$ as $\int_{H \times H} |\xi_1 - \xi_2|^2\dd\pi_t(\xi_1,\xi_2)$ so that by definition of $d_2$, for every $i = 1,...,N$
\begin{equation}\label{eq: weak-strong unique wasserstein}
d_2^2(\hat{\mu}_{i,t},\delta_{u_i(t)}) \leq \frac{1}{r_i}\int_H\|u - \overline{u}_i\|_{L^2}^2\dd\overline{\mu}_i(u) \leq \frac{1}{r_i} \left(\frac{r_i}{n}\right)^2 \leq \frac{1}{n^2},
\end{equation}
where we used that $\overline{\mu}_i^{(n)}$ is concentrated on $S_i^{(n)} \subset B_{r_i/n}(\overline{u}_i)$. Also, 
\[d_2^2(\hat{\mu}_{0,t},\delta_{\CalO(t)}) \leq \int_H \|u\|_{L^2}^2\dd\hat{\mu}_{0,t}(u) \leq \int_H \|u\|_{L^2}^2\dd\overline{\mu}_{0}^{(N,n)}(u) \leq M^2.\]
Now, let $\varepsilon > 0$. As both \eqref{eq: weak-strong unique wasserstein} and $\lim_{n \to \infty} d_2(\mu_t^{(n)},\mu_t)=0$ are uniform in $N$ and $t$, we may choose $n \in \N$ large enough such that for every $N \in \N$ and $i=1,...,N$, $0 \leq t \leq T$,
\[
d_2^2(\hat{\mu}_{i,t},\delta_{u_i(t)}) \leq \frac{\varepsilon^2}{18} \text{ and } d_2(\mu_t,\mu_t^{(n)}) \leq \frac{\varepsilon}{3}.
\]
Then choose $N$ large enough such that 
\[\alpha_0^{(N,n)} = \sum_{i > N} \alpha_i^{(n)} < \frac{\varepsilon^2}{18 M^2}.\]
Consequently,
\begin{align*}
d_2^2(\tilde{\mu}_t,\tilde{\mu}_t^{(N,n)}) &\leq \alpha_0^{(N,n)}d_2^2(\hat{\mu}_{0,t},\delta_{\CalO(t)}) + \sum_{i=1}^N\alpha_i^{(n)}d_2^2(\hat{\mu}_{i,t},\delta_{u_i(t)})\\
&\leq \frac{\varepsilon^2}{18 M^2}M^2 + \frac{\varepsilon^2}{18}\sum_{i=1}^N \alpha_i^{(n)} \leq \frac{\varepsilon^2}{9}
\end{align*}
and likewise
\[
d_2^2(\tilde{\mu}_t^{(N,n)},\mu_t^{(n)}) \leq \sum_{i > N}\alpha_i^{(n)}d_2^2(\delta_{\CalO(t)},\delta_{u_i(t)}) \leq M^2\sum_{i > N}\alpha_i^{(n)} = M^2 \alpha_0^{(N,n)} \leq \frac{\varepsilon^2}{9}.
\]
Finally
\begin{equation*}
d_2(\tilde{\mu}_t,\mu_t) \leq d_2(\tilde{\mu}_t,\tilde{\mu}_t^{(N,n)}) + d_2(\tilde{\mu}_t^{(N,n)},\mu_t^{(n)}) + d_2(\mu_t^{(n)},\mu_t)
\leq \frac{\varepsilon}{3} + \frac{\varepsilon}{3} + \frac{\varepsilon}{3}
= \varepsilon.
\end{equation*} 
\end{proof}

\section{Discussion and Open Problems}\label{Discussion and open problems}
The following problems that we list here are not specifically related to the incompressible Euler or Navier-Stokes equations. Instead, they are problems on a more general level and some more insight and further examples would benefit the area.

\subsection*{Existence Independently of the Deterministic Problem}
The Dirac measures of weak solutions satisfy the Foia\cs-Liouville equation \eqref{eq: Foias-Liouville}. For non-unique solutions sharing the same initial data, convex combinations of the corresponding solutions also satisfy \eqref{eq: Foias-Liouville}.  In this way, the concept of statistical solutions allows for deterministic initial data which ``splits'' into non-deterministic distributions so that statistical solutions are a more flexible solution concept than weak solutions.

However, as mentioned in our conclusion of \Cref{subsec: construction stats sols}, current methods for obtaining existence of statistical solutions generally rely on existence and even some kind of compactness of the underlying deterministic solutions. It would be interesting to see further approaches primarily based on the Foia\cs-Liouville equation, perhaps even for equations and/or data where weak deterministic existence is unknown.

\subsection*{Connections Between Phase Space Statistical Solutions and Trajectory Statistical Solutions}
We have seen in \Cref{thm: traj stats sol -> phase space stats sol} that by projecting a trajectory statistical solution of the incompressible Euler equations at every point in time yields a phase space statistical solution. This also holds true in a very general setting \cite[Theorem 3.3]{BMR16}.

It is generally unknown if the converse holds as well. It is known for instance for the two-dimensional Navier-Stokes equations (with initial distribution of bounded support) as both the trajectory and phase space statistical solutions are uniquely given as pushforward measures of the initial distribution along the solution operator in trajectory or phase space respectively. Perhaps more interestingly, this is also true for time-average stationary statistical solutions of the three-dimensional Navier-Stokes equations \cite[Theorem 5.5]{FRT19}. So far, no counterexamples, i.e., phase space statistical solutions that cannot be obtained as projections of trajectory statistical solutions, can be found in the literature for any equation. This is related to the previous problem as it requires a further study of the Foia\cs-Liouville equation and its connection to the deterministic formulation.


\subsection*{Connections Between Young Measures and Correlation Measures}
Let us consider the map $\mathscr{F}\colon \CalL^2(\T^2;\R^2) \to L^\infty_{\operatorname{w}}(\T^2,\mathcal{P}(\R^2))$, where $\mathscr{F}$ projects onto the first Young measure of finite kinetic energy in the hierarchy of Young measures that constitute a correlation measure in $\CalL^2(\T^2;\R^2)$.

Then it is unclear to us if $\mathscr{F}$ is one-to-one or onto. Being onto would mean that every Young measure $\lbrace \nu_x \rbrace_{x \in \T^2}$ of finite kinetic energy would be the first correlation marginal of a correlation measure in $\CalL^2(\T^2;\R^2)$. Due to \Cref{thm: corr measures main thm}, this is equivalent to the existence of a Borel probability measure $\mu$ on $L^2(\T^2;\R^2)$ having finite second moment so that for every $h \in \CalH_0^1(\T^2;\R^2)$
\[\int_{L^2}\int_{\T^2} h(x,u(x))\dx\dd\mu(u) = \int_{\T^2} \langle \nu_x, h(x,\xi)\rangle\dx.\]
$\mathscr{F}$ being one-to-one would mean that such a $\mu$, if it exists, is unique.

Intuitively, $\mathscr{F}$ being onto while not being one-to-one seems reasonable. The latter, that is being one-to-one, would mean that the first correlation marginal determines all further multipoint correlations.

However, since the required properties of a correlation measure, in particular the diagonal continuity, are quite restrictive, this is not entirely obvious and proofs or explicit examples would be desirable. The only explicit example in this context is again that of a single Dirac Young measure which can only in a unique way, due to the diagonal continuity, be ``extended'' to a full correlation measure, namely the atomic correlation measure.

Finally, we would like to point here at the general problem of a lack of more or less explicit examples of statistical solutions. Unless the system is well-posed and all phase space statistical solutions are given as pushforward measures of an initial distribution along the solution semigroup, the only explicit examples at hand seem to be convex combinations of Diracs corresponding to weak solutions. Even the well-posed case is inconvenient to handle in the context of correlation measures and moment-based statistical solutions. For example, consider the case of a measurable semigroup $\lbrace S_t \rbrace_{t \geq 0}$ and a Borel probability measure $\mu_0$ on $L^2(\T^2;\R^2)$ having finite second moment so that we may define $\rho_t \coloneqq {S_t}_{\sharp}\mu_0$ for every $t \geq 0$. For any $h \in \CalH^k_0(\T^2;\R^2)$ the right-hand side in \eqref{eq: corr meas main thm identity} is
\[\int_{L^2}\int_{(\T^2)^k} h(x,u(x))\dx\dd\rho_t(u) = \int_{L^2}\int_{(\T^2)^k} h(x,({S_t}u_0)(x))\dx\dd\mu_0(u_0).\]
So it can be expressed in terms of the initial distribution and the given semigroup $\lbrace {S_t} \rbrace_{t \geq 0}$.\\ 
However, one cannot make sense of the left-hand side in \eqref{eq: corr meas main thm identity} similarly just in terms of the initial correlation measure corresponding to $\mu_0$ and $\lbrace {S_t} \rbrace_{t \geq 0}$. 
Note that here, for given $x \in (\T^2)^k$ and $t \geq 0$, $h(x,\cdot)\colon (\R^2)^k \to \R$ while ${S_t}\colon L^2(\T^2;\R^2) \to L^2(\T^2;\R^2)$, hence the composition $\xi \mapsto h(x,{S_t}\xi)$ does not make sense.


\end{document}